\documentclass[12pt, a4paper]{amsart}
\usepackage{amsmath}
\usepackage{geometry,amsthm, graphics,tabularx,amssymb,shapepar}
\usepackage[cmyk]{xcolor}
\usepackage{appendix}
\usepackage{amscd}
\usepackage[all]{xypic}
\usepackage{ulem}
\usepackage{bm}
\usepackage{extarrows}
\makeatletter
\newcommand*{\rom}[1]{\expandafter\@slowromancap\romannumeral #1@}
\makeatother

\newcommand{\BC}{{\mathbb {C}}}

\newcommand{\BN}{{\mathbb {N}}}

\newcommand{\BR}{{\mathbb {R}}}

\newcommand{\CF}{{\mathcal {F}}}
\newcommand{\CG}{{\mathcal {G}}}

\newcommand{\CO}{{\mathcal {O}}}

\newcommand{\RC}{{\mathrm {C}}}
\newcommand{\RD}{{\mathrm {D}}}

\newcommand{\Ri}{{\mathrm{i}}}

\newcommand{\RS}{{\mathrm {S}}}

\newcommand{\RU}{{\mathrm {U}}}

\newcommand{\Ad}{{\mathrm{Ad}}}

\newcommand{\Hom}{{\mathrm{Hom}}}

\newcommand{\Lie}{{\mathrm{Lie}}}

\newcommand{\wo}{{\widetilde\otimes}}

\newcommand{\Spec}{{\mathrm{Spec}}}

\newcommand{\wt}{\widetilde}
\newcommand{\wh}{\widehat}

\newcommand{\ad}{\operatorname{ad}}

\newcommand{\Ev}{\operatorname{Ev}}

\newcommand{\g}{\mathfrak g}

\newcommand{\q}{\mathfrak q}

\renewcommand{\l}{\mathfrak l}

\newcommand{\m}{\mathfrak m}

\newcommand{\C}{\mathbb{C}}
\newcommand{\R}{\mathbb R}

\newcommand{\abs}[1]{\lvert#1\rvert}

\newcommand{\la}{\langle}
\newcommand{\ra}{\rangle}

\newcommand{\be}{\begin {equation}}
\newcommand{\ee}{\end {equation}}
\newcommand{\bee}{\begin {equation*}}
\newcommand{\eee}{\end {equation*}}

\newcommand{\qaq}{\quad\textrm{and}\quad}

\newcommand{\cf}{\textit{cf.}~}

\renewcommand{\mid}{\,:\,}

\theoremstyle{Theorem}

\theoremstyle{Theorem}

\theoremstyle{Theorem}

\theoremstyle{Theorem}

\theoremstyle{Plain}

\theoremstyle{remark}

\theoremstyle{remark}

\theoremstyle{Definition}
\newtheorem{dfn}{Definition}[section]

\newtheorem{cord}[dfn]{Corollary}
\newtheorem{prpd}[dfn]{Proposition}
\newtheorem{thmd}[dfn]{Theorem}
\newtheorem{lemd}[dfn]{Lemma}

\newtheorem{remarkd}[dfn]{Remark}
\numberwithin{equation}{section}
\begin{document}

\title[Modules of Lie pairs and representations of formal Lie groups]{ 
Representations of formal Lie groups and Lie pairs}

\author[F. Chen]{Fulin Chen}
\address{School of Mathematical Sciences, Xiamen University,
 Xiamen, 361005, China} \email{chenf@xmu.edu.cn}

\author[B. Sun]{Binyong Sun}
\address{Institute for Advanced Study in Mathematics and New Cornerstone Science Laboratory, Zhejiang University,  Hangzhou, 310058, China}
\email{sunbinyong@zju.edu.cn}

\author[C. Wang]{Chuyun Wang}
\address{School of Mathematics and Statistics, Hainan University, Haikou, 570228, China}
\email{chuunw@amss.ac.cn} 

\subjclass[2020]{22E47, 16T05, 16T15} \keywords{formal Lie groups, representations, Hopf formal algebras, Hopf formal coalgebras.}

\begin{abstract}
Formal Lie groups, which generalize Lie groups in differential geometry, are analogous to formal group schemes in algebraic geometry.
In a previous paper, we established the basic theory of formal Lie groups, including a formal Lie theory theorem that identifies formal Lie groups with Lie pairs.  In this paper, we develop the foundations of the representation theory of formal Lie groups and Lie pairs. 
In particular, we construct modules of Lie pairs on function spaces over formal manifolds and prove that the category of representations of a formal Lie group is isomorphic to the category of modules of the corresponding Lie pair.
\end{abstract}

\maketitle

\tableofcontents

\section{Introduction and the main results}
\subsection{Motivations} 
 We begin by introducing the concept of Lie pairs.
\begin{dfn}\label{df:liepair}
	A \textbf{Lie pair} is a pair $(\mathfrak{q},L)$ consisting of a finite-dimensional complex Lie algebra $\mathfrak{q}$ and a real Lie group $L$, together with
	a   representation
	\[
	\Ad:\ L\curvearrowright \mathfrak{q}, \qquad (g,\eta )\mapsto \Ad_g \eta
	\]
	of $L$ on $\mathfrak{q}$ as Lie algebra automorphisms, and an injective Lie algebra homomorphism
	\[
	\iota:\ \mathfrak{l}\rightarrow \mathfrak{q}, \quad (\text{$\mathfrak{l}$ is the complexified Lie algebra of $L$}),
	\]
	subject to the following conditions:
	\begin{itemize}
		\item
		$\iota$ is $L$-equivariant, where $\mathfrak{l}$ carries the adjoint representation of $L$, and $\mathfrak{q}$ carries the representation $\Ad: L\curvearrowright\mathfrak{q}$; and
		\item
		the differential $\ad: \mathfrak{l}\curvearrowright \mathfrak{q}$ of $\Ad: L\curvearrowright \mathfrak{q}$ equals the action
		\[
		\mathfrak{l}\curvearrowright \mathfrak{q},\qquad (\tau,\eta)\mapsto [\iota(\tau),\eta].
		\]
	\end{itemize}
\end{dfn}
Throughout this paper, all Lie groups are assumed to be Hausdorff (and thus they are paracompact). They may or may not have countably many connected components. 
With the notation and assumptions of Definition \ref{df:liepair}, we refer to the quadruple $(\mathfrak{q}, L, \Ad, \iota)$, or simply the pair $(\mathfrak{q}, L)$ when $\Ad$ and $\iota$ are understood, as a Lie pair.

Every finite-dimensional complex Lie algebra $\mathfrak{q}$ determines a Lie pair $(\mathfrak{q},\{e\})$, while every real Lie group $L$ determines a Lie pair $(\mathfrak{l},L)$. Here and henceforth, the notation $\{e\}$ denotes the trivial group, and we use the corresponding lowercase Gothic letter to denote the complexified Lie algebra of a real Lie group.

Let $(\q,L)$ be a Lie pair.
In this paper, an LCS is a locally convex topological vector space over $\C$, not necessarily Hausdorff. However, every complete or quasi-complete LCS is assumed to be Hausdorff. 
With this convention, we define $(\q,L)$-modules as follows.
\begin{dfn}\label{def:ofqLmod}
	 A $(\q,L)$-\textbf{module} is a pair $(\mu,E)$, where $E$ is  a  quasi-complete LCS and $\mu:=(\mu_{\q},\mu_L)$ consists of a continuous Lie algebra action \[\mu_\q:\ \q\curvearrowright E, \quad (\eta,u)\mapsto \eta.u,\]
	and a continuous group action \[\mu_L:\ L\curvearrowright E, \quad (g,u)\mapsto g.u, \] such that 
	\begin{itemize}
		\item  for all $g\in L, \eta \in \q$ and $ u\in E$, \be\label{eq:qLmodule1} g.(\eta.(g^{-1}.u))=(\Ad_g \eta).u;\ee
		\item for all $\tau \in\Lie(L)$ and $ u\in E$, 
  \be\label{eq:qLmodule2} \lim_{t\rightarrow0}\frac{\exp (t\tau).u-u}{t}=\iota(\tau).u. \ee
  \end{itemize}
\end{dfn}
Here and henceforth,  $\Lie(H)$ is the real Lie algebra of a Lie group $H$, and $\exp: \Lie(H)\rightarrow H $ is the exponential map. 

\begin{dfn}
	Let $(\mu_1,E_1)$, $(\mu_2,E_2)$ be two $(\q,L)$-modules. A continuous linear map $\phi: E_1\rightarrow E_2$ is said to be  $(\q,L)$-equivariant if it is both $\q$-equivariant and $L$-equivariant. 
\end{dfn}
All $(\q, L)$-modules, together with all $(\q, L)$-equivariant continuous linear maps, form a category. 
In particular, the category of $(\q,\{e\})$-modules is the same as  the category of (continuous) $\q$-modules, whereas the category of $(\l,L)$-modules is the same as the category of smooth representations of $L$.


In the literature, representations of Lie pairs $(\mathfrak{q},L)$ have primarily been studied under the assumption that $L$ is compact (see \cite{KV}). However, for applications to the theory of automorphic forms, it is desirable to develop a theory of smooth representations for general Lie pairs $(\mathfrak{q},L)$, where $L$ is not necessarily compact. 

Our previous work \cite{CSW1, CSW2, CSW3, CSW4} laid the foundations of the theory of formal manifolds and formal Lie groups, extending the classical theory of smooth manifolds and Lie groups. This theory provides a natural framework for the study of smooth representations of general Lie pairs: general Lie pairs correspond to formal Lie groups (see \cite[Theorem 1.11]{CSW4}), and their smooth representations can be naturally realized on function spaces over formal manifolds.

In this paper, we introduce representations of formal Lie groups and prove that the category of representations of a formal Lie group is isomorphic to the category of modules of the corresponding Lie pair (see Theorem \ref{thm:eqMR}). We also construct modules of Lie pairs on the function spaces over formal manifolds studied in \cite{CSW2} (see Proposition \ref{prop:modofgG}), which will be used to construct projective and injective resolutions of modules of Lie pairs.

\subsection{Basics on formal Lie groups}\label{sec:forLie}
We first recall some notions concerning formal manifolds and formal Lie groups from \cite{CSW1, CSW4}.

For a smooth manifold $N$ and $k\in \BN:=\{0,1,2,\dots\}$, denote by $N^{(k)}$ the locally ringed space $(N, \CO_N^{(k)})$ over $\mathrm{Spec}(\BC)$.
 Here, $\CO_N^{(k)}$ is the sheaf 
\[ U\mapsto
\CO_N^{(k)}(U):=\RC^\infty(U)[[y_1,y_2,\dots,y_k]]
\] (the restriction maps are the obvious ones), 
where $\RC^\infty(U)$ is the $\BC$-algebra of complex-valued smooth functions on $U$, and $\RC^\infty(U)[[y_1,y_2,\dots,y_k]]$ is the $\BC$-algebra of formal power series with coefficients in $\RC^\infty(U)$.
\begin{dfn}\label{def:formalmanifold}
	A \textbf{formal manifold} is a locally ringed space $(M, \CO_M)$ over $\mathrm{Spec}(\BC)$ such that
	\begin{itemize}
		\item the topological space $M$ is paracompact and Hausdorff; and
		\item for every $a\in M$, there is an open neighborhood $U$ of $a$ in $M$ and $n,k\in \BN$ such that $(U, \CO_M|_U)$ is isomorphic to $(\R^n)^{(k)}$ as locally ringed spaces over $\mathrm{Spec}(\BC)$.
	\end{itemize}
\end{dfn}
By abuse of notation, we will often not distinguish a formal manifold $(M, \CO_M)$ from its underlying topological space $M$, and call $\CO_M$ the structure sheaf of it. 
An element of $\CO_M(M)$ is called a formal function on $M$.
Let $\m_{\CO_M}$ be the ideal sheaf defined by
\be\label{eq:defmo}
\m_{\CO_M}(U):=\{f\in \CO_M(U)\mid f_a\in \m_{M,a} \textrm{ for all $a\in U$}\},
\ee
where $f_a$ is the germ of $f$ at $a$, and $\m_{M,a}$ is the maximal
ideal of the stalk $\CO_{M,a}$.
Then
\be \label{eq:underM}
\underline{M}:=(M,\underline{\CO_M}:=\CO_M/\m_{\CO_M})
\ee
is a smooth manifold, called the reduction of $M$.


A morphism from a formal manifold $(M_1,\CO_{M_1})$ to another formal manifold $(M_2,\CO_{M_2})$ is a pair $\varphi=(\overline\varphi, \varphi^*)$, where $\overline\varphi: M_1\rightarrow M_2$ is a continuous map and
\be\label{phin}
\varphi^*:\ \overline\varphi^{-1}\CO_{M_2}\rightarrow \CO_{M_1}
\ee
is a $\BC$-algebra sheaf homomorphism that induces local homomorphisms on the stalks.
For open subsets $U_2$ of $M_2$ and $U_1$ of $M_1$ such that $\overline\varphi(U_1)\subset U_2$,
we write
\[\varphi^*_{U_2,U_1}:\CO_{M_2}(U_2)\rightarrow \CO_{M_1}(U_1)\] for the homomorphism of $\C$-algebras induced by \eqref{phin}.
If there is no confusion, we will denote $\varphi^*_{U_2,U_1}$ by $\varphi^*_{U_2}$ or $\varphi^*$ for simplicity.

Finite products exist in the category of formal manifolds (see \cite[Theorem 1.9]{CSW1}).
Given two morphisms $\varphi_1: M\rightarrow M_1$ and $\varphi_2: M\rightarrow M_2$ of formal manifolds, when no confusion is possible, we still use  
   \be\label{eq:varphi1times2} \varphi_1\times \varphi_2:\ M\rightarrow M_1\times M_2\ee to denote the composition
   \[
   M\xrightarrow{\Delta_{M}}M\times M\xrightarrow{\varphi_1\times\varphi_2}M_1\times M_2. \]
   Here and henceforth, $\Delta_M$ denotes the diagonal morphism of $M$.


\begin{dfn}\label{df:Lirgroup}
	A \textbf{formal Lie group} is a triple $(G,m,\varepsilon)$, where $G$ is a formal manifold,
    \[m: G\times G\rightarrow G \qaq \varepsilon: \mathrm{Spec}(\C)\rightarrow G \]
      are morphisms of formal manifolds, satisfying the following conditions:
	\begin{itemize}
 \item the diagram \be \label{eq:chainofG}
		\begin{CD}
			G\times G\times G @> m \times \mathrm{id}_G >>  G\times G\\
			@V\mathrm{id}_G\times m VV           @V V mV\\
			G\times G @>m>>  G \\
		\end{CD} \qquad (\textrm{$\mathrm{id}$ indicates the identity morphism})
		\ee commutes;
		\item the diagram  
		\[
		\xymatrix{
			\Spec(\BC)\times G \ar[dr]_= \ar[r]^{\ \ \ \  \varepsilon\times \mathrm{id}_G}        &G\times G \ar[d]_{m} & G\times \Spec(\BC)\ar[l]_{\mathrm{id}_G\times \varepsilon\ \ \ } \ar[dl]^=\\
			& G 
		}
		\]
  commutes; and
		
		\item there is a morphism $i: G\rightarrow G$  such that
		the diagram
	    \be \label{eq:comofi}	\begin{CD}
	    	G @> \mathrm{id}_G\times i >>  G\times G@<\ i\times \mathrm{id}_G <<G\\
	    	@V VV           @Vm V V @VVV\\
	    	\Spec(\BC) @> \varepsilon >>  G @<\varepsilon<<	\Spec(\BC) \\
	    \end{CD} \ee
		commutes.
	\end{itemize}
	
\end{dfn}


When no confusion is possible, we will not distinguish a formal Lie group $(G, m, \varepsilon)$ from $G$. The morphisms $m$ and $\varepsilon$ are called the multiplication morphism and the unit morphism, respectively. Let $e$ denote the image of the unique point in $\Spec (\BC)$ under the morphism $\varepsilon$, to be called the identity element of $G$.
As in the case of abstract groups, a morphism $i: G\rightarrow G$ satisfying the commutative diagram \eqref{eq:comofi} is unique. The morphism $i$ satisfies $i\circ i=\mathrm{id}_G$, and it is called the inversion morphism. 


The reduction $\underline{G}$ of a formal Lie group $G$ is naturally a Lie group (see \cite[Example 3.5]{CSW4}).
\subsection{Hopf formal algebras and Hopf  formal coalgebras}\label{sec:comodofHopf}
For two LCS $E$ and $F$, the algebraic tensor product $E \otimes F$ admits two natural locally convex topologies: the inductive tensor product $E \otimes_{\mathrm{i}} F$ and the projective tensor product $E \otimes_{\pi} F$. These are characterized by universal properties for separately and jointly continuous bilinear maps, respectively, just as the algebraic tensor product $E \otimes F$ is characterized by bilinear maps. We denote the quasi-completions of the maximal Hausdorff quotients of these two topological tensor product spaces by
\[
E\wt\otimes_{\mathrm{i}}F
\quad\text{and}\quad
E\wt\otimes_{\pi}F,\]
respectively, and the completions of these quotients by
\[
E\widehat\otimes_{\mathrm{i}}F
\quad\text{and}\quad
E\widehat\otimes_{\pi}F,
\]
respectively.

Given two continuous linear maps $\phi_1:E_1\to F_1$ and $\phi_2:E_2\to F_2$ between LCS, we denote by $\phi_1\otimes\phi_2$ the continuous linear map on the various topological tensor products (or their completions) obtained by the tensor product of $\phi_1$ and $\phi_2$. For example, we have the maps
\[
\phi_1\otimes\phi_2:\ E_1\otimes_\pi E_2 \longrightarrow F_1\otimes_\pi F_2 \qaq
\phi_1\otimes\phi_2:\ E_1\wt{\otimes}_\Ri E_2 \longrightarrow F_1\wt{\otimes}_\Ri F_2.
\]

A {$\wt{\otimes}_{\pi}$-algebra}  is a quasi-complete  LCS $A$ together with a continuous associative multiplication $A \wt{\otimes}_{\pi} A \rightarrow A$  and a unit $\mathbb{C} \rightarrow A$. Similarly, we have the notions of  $\wt{\otimes}_{\mathrm{i}}$-coalgebras,  Hopf $\wt{\otimes}_{\mathrm{i}}$-algebras, and Hopf  $\wt{\otimes}_{\pi}$-algebras (\cf \cite[Appendix A.2]{CSW4}).  

Given a formal manifold $(M,\mathcal{O}_M)$, equip $\mathcal{O}_M(M)$ with the smooth topology (see \cite[Definition 4.1]{CSW1}). Then, together with the unit
\begin{equation}\label{eq:unitofOM}
\mathbb{C} \rightarrow \mathcal{O}_M(M),\quad c \mapsto c
\end{equation}
and the continuous multiplication
\begin{equation}\label{eq:mulofOM}
\mathcal{O}_M(M) \wt{\otimes}_{\pi} \mathcal{O}_M(M) \rightarrow \mathcal{O}_M(M),\quad f_1 \otimes f_2 \mapsto f_1 f_2 \quad (f_1, f_2 \in \mathcal{O}_M(M)),
\end{equation}
the LCS $\mathcal{O}_M(M)$ becomes a $\wt{\otimes}_{\pi}$-algebra (\cf \cite[Proposition 4.8]{CSW1}).

Recall from \cite[Section 5]{CSW2} the LCS $\RD_c^{-\infty}(M;\mathcal{O}_M)$ of compactly supported formal distributions on $M$. By \cite[Example 4.2]{CSW1} and \cite[Theorem 1.1]{CSW2}, $\CO_M(M)$ and $\RD_c^{-\infty}(M;\mathcal{O}_M)$ generalize, respectively, the spaces of (complex-valued) smooth functions and compactly supported distributions on a smooth manifold.

By \cite[Proposition 5.14 and Lemma A.12]{CSW2}, there are  identifications
\be \label{eq:OM'}
\RD_c^{-\infty}(M;\mathcal{O}_M) = (\mathcal{O}_M(M))' 
\ee
and
\be \label{eq:DMM=DMDM}
\RD_c^{-\infty}(M;\mathcal{O}_M) \wt{\otimes}_{\mathrm{i}} \RD_c^{-\infty}(M;\mathcal{O}_M) = (\mathcal{O}_M(M) \wt{\otimes}_{\pi} \mathcal{O}_M(M))'
\ee
of LCS. Then, the LCS $\RD_c^{-\infty}(M;\mathcal{O}_M)$ becomes an  $\wt{\otimes}_{\mathrm{i}}$-coalgebra: the comultiplication is 
the transpose
of \eqref{eq:mulofOM}, and the counit is the transpose
of \eqref{eq:unitofOM}.  Here and henceforth, for any LCS $E$,  $E'$ is the space of all continuous linear functionals on $E$,  equipped with the strong topology (see \cite[Chapter 19]{Tr}).


We make the following definitions (\cf \cite[Definitions 1.9 and 1.10]{CSW4}).

\begin{dfn}\label{def:foralg}

\noindent(a) A  $\wt{\otimes}_{\pi}$-algebra is called a \textbf{formal algebra} if it is isomorphic to $\mathcal{O}_M(M)$ as a  $\wt{\otimes}_{\pi}$-algebra for some formal manifold $M$.
    
\noindent(b) An  $\wt{\otimes}_{\mathrm{i}}$-coalgebra is called a \textbf{formal coalgebra} if it is isomorphic to $\RD_c^{-\infty}(M;\mathcal{O}_M)$ as  a $\wt{\otimes}_{\mathrm{i}}$-coalgebra for some formal manifold $M$.
\end{dfn}

\begin{dfn}\label{def:formalHopfalg}

\noindent(a) A Hopf $\wt{\otimes}_{\pi}$-algebra is called a \textbf{Hopf formal algebra} if it is a formal algebra as a $\wt{\otimes}_{\pi}$-algebra.
    
\noindent(b) A Hopf $\wt{\otimes}_{\mathrm{i}}$-algebra is called a \textbf{Hopf formal coalgebra} if it is a formal coalgebra as an $\wt{\otimes}_{\mathrm{i}}$-coalgebra.
\end{dfn}

\begin{dfn} Let $A$ be a Hopf formal algebra.  

\noindent (a) An $A$-\textbf{comodule} is a pair $(\varrho, E)$
consisting of a quasi-complete LCS  $E$ and a continuous linear map \[\varrho:\  E\rightarrow E\wt\otimes_{\pi} A,\]
such that the diagrams
\be\label{eq:RepchianruleofO}\xymatrix{
E
\ar[r]^-{\varrho}
\ar[d]_{\varrho}
&
E\wt\otimes_{\pi} A
\ar[d]^{\varrho \otimes \mathrm{id}_{A}}
\\
E\wt\otimes_{\pi} A
\ar[r]^-{\mathrm{id}_{E}\otimes \Delta}
&
E\wt\otimes_{\pi} A\wt\otimes_{\pi}A
} \qaq \xymatrix{E \ar[r]^{\varrho} \ar[rd]_{=} &E\wt\otimes_{\pi}A\ar[d]^{\mathrm{id}_E\otimes\varepsilon} \\ &E\wt\otimes_{\pi}\BC}\ee 
are commutative, where $\Delta$ is the comultiplication of  $A$, and $\varepsilon$ is the counit of $A$. 

\noindent(b) Let $(\varrho_1, E_1)$ and $(\varrho_2, E_2)$ be two comodules of $A$.
    A continuous linear map $\phi: E_1\rightarrow E_2$ is said to be  $A$-equivariant if the diagram 
    \[\begin{CD}
        E_1 @>\varrho_1>> E_1\wt\otimes_{\pi}A\\ @V\phi VV @VV\phi\otimes\mathrm{id}_{A} V \\ E_2 @>\varrho_2>> E_2\wt\otimes_{\pi}A
    \end{CD}\] is commutative. 
\end{dfn}
All comodules of a Hopf formal algebra $A$, together with all $A$-equivariant continuous linear maps, form a category. 

For every Hopf formal coalgebra $C$, let
\be\label{eq:LC}
\CG(C):=\{\eta\in C\mid \Delta(\eta)=\eta\otimes\eta, \eta\neq0\}
\ee be the set of group-like elements in $C$,
where $\Delta$ is the comultiplication of $C$. Then the multiplication and the unit element of $C$, endow $\CG(C)$ with a group structure. Equipped with the subspace topology inherited from $C$, the group $\CG(C)$ is a topological group (see Corollary \ref{cor:Gembed}).



\begin{dfn}\label{def:Cmod}
Let $C$ be a  Hopf formal coalgebra.

\noindent(a) A $C$-\textbf{module} is a pair $(\pi,E)$
consisting of a quasi-complete LCS $E$ and a continuous linear map
\[\pi:\  C\wt\otimes_{\mathrm{i}} E\rightarrow E,\quad (\eta\otimes u)\mapsto \eta.u:=\pi(\eta\otimes u)\] 
such that the diagrams
\be\label{eq:RepchianruleofD}
 \xymatrix{ C\otimes_{\Ri} C\otimes_{\Ri} E \ar[r]^-{\mathrm{id}_{C}\otimes \pi} \ar[d]_{m\otimes \mathrm{id}_E} & C\wt\otimes_{\Ri} E \ar[d]^{\pi} \\ C\wt\otimes_{\Ri} E \ar[r]^{\pi} & E } \qaq \xymatrix{\BC\wt\otimes_\Ri E\ar[r]^{\epsilon\otimes \mathrm{id}_E}\ar[rd]_{=} & C\wt\otimes_{\Ri} E\ar[d]^{\pi} \\ & E}\ee commute,  where $m$ is the multiplication of $C$, and $\epsilon$ is the unit of $C$. 

\noindent (b)
A $C$-module $(\pi,E)$ is said to be \textbf{continuous} if the induced map 
\[ \CG(C)\times E\rightarrow E,\quad (g,u)\mapsto g.u,\]  is continuous. 

\noindent (c) Let $(\pi_1, E_1)$ and $(\pi_2, E_2)$ be two continuous modules of $C$.
    A continuous linear map $\phi: E_1\rightarrow E_2$ is said to be  $C$-equivariant if the diagram 
    \[\begin{CD}
        C\wt\otimes_{\mathrm{i}} E_1 @>\pi_1>> E_1\\ @V\mathrm{id}_{C}\otimes\phi VV @VV\phi V \\ C\wt\otimes_{\mathrm{i}} E_2@>\pi_2>> E_2
    \end{CD}\]
    commutes.
\end{dfn}
All continuous modules of $C$, together with all $C$-equivariant continuous linear maps, form a category.
\begin{remarkd}

The completed (resp.\, quasi-completed) projective tensor product is associative, whereas the completed (resp.\, quasi-completed)  inductive tensor product need not be (see \cite[discussion following Theorem~8.12]{G} for example). We therefore use the uncompleted inductive tensor product in the first diagram of \eqref{eq:RepchianruleofD}.
\end{remarkd}

\subsection{Main results and structure of the paper}

We first recall the formal Lie theory theorem from \cite{CSW4}.
\begin{thmd}\label{thm:equiGqL}
    The following categories are equivalent to each other:
 \begin{enumerate}
  \item the category of Lie pairs;
  \item the category of Hopf formal  coalgebras; 
   \item the opposite category of the category of Hopf formal algebras; and
   \item the category of formal Lie groups.
\end{enumerate}
\end{thmd}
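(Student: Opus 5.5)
This theorem is recalled from \cite{CSW4}; the plan is to assemble the equivalences from three functors and check that each is an equivalence, relying on \cite{CSW1, CSW2} for the functional analysis. The chain will be (4) $\Rightarrow$ (3) $\Rightarrow$ (2) $\Rightarrow$ (1), closed up with a quasi-inverse from (1) back to (4).

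First, (4) $\simeq$ (3). A formal Lie group $G$ is sent to $\CO_G(G)$. The comultiplication will be $m^*$, composed with the identification $\CO_{G\times G}(G\times G)=\CO_G(G)\wt\otimes_\pi\CO_G(G)$ from \cite{CSW1}. The counit is $\varepsilon^*$ and the antipode is $i^*$. The diagrams in Definition \ref{df:Lirgroup} transpose to the Hopf axioms. Full faithfulness comes from the fact that morphisms of formal manifolds $M_1\to M_2$ correspond bijectively to continuous $\wt\otimes_\pi$-algebra homomorphisms $\CO_{M_2}(M_2)\to\CO_{M_1}(M_1)$, which is a reconstruction theorem for formal manifolds from their global function algebras. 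Essential surjectivity is then tautological from Definition \ref{def:formalHopfalg}, since the structure maps of a Hopf formal algebra come back from morphisms of formal manifolds.

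Second, (3) $\simeq$ (2). Apply strong duality $A\mapsto A'$, using \eqref{eq:OM'} and \eqref{eq:DMM=DMDM} to transpose the multiplication and comultiplication. Reflexivity of $\CO_M(M)$ makes this an anti-equivalence.

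Third, (2) $\simeq$ (1). To a Hopf formal coalgebra $C$ attach the pair $(\q,L)$ as follows. Take $L=\CG(C)$, which is the reduction $\underline G$ and carries a Lie group structure. Take $\q$ to be the primitive elements of $C$, with its commutator bracket. $L$ acts on $\q$ by conjugation, and $\iota$ comes from differentiating the inclusion $L\hookrightarrow C$. The Lie pair axioms are then direct.

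For a quasi-inverse from (1) to (4), build the formal Lie group $G$ from $(\q,L)$. Take the underlying manifold $L$ and set
\[
\CO_G(U)=\Hom_{\mathrm U(\l)}\bigl(\mathrm U(\q),\RC^\infty(U)\bigr),
\]
where $\mathrm U(\l)$ acts on $\RC^\infty(U)$ by left-invariant vector fields. The multiplication is defined via the Hopf structure of $\mathrm U(\q)$ together with the group law of $L$, and $\Ad$ is used to make these compatible. Choosing a complement of $\iota(\l)$ in $\q$ and using PBW shows that $\CO_G$ is locally $(\R^n)^{(k)}$. One then checks $C\cong \mathrm U(\q)\otimes_{\mathrm U(\l)}\RD_c^{-\infty}(L)$ as Hopf formal coalgebras, namely $\RD_c^{-\infty}(G;\CO_G)$.

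The main obstacle is this last step: proving that every Hopf formal coalgebra has this shape. Concretely, one must show that a formal Lie group is determined by its reduction together with its formal neighborhood of $e$, the latter being the dual of $\mathrm U(\q)$. I would try to do this via a Cartier–Kostant–Milnor–Moore type decomposition for distributions supported at $e$, then translate by group-like elements to reach the rest of $C$.
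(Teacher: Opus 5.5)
Your architecture matches what the paper relies on. The equivalences (4)$\simeq$(3)$\simeq$(2) are Proposition~\ref{prop:formalliegrouptohopfalg} (from \cite[Proposition 9.2]{CSW4}), and the functor to Lie pairs is $G\mapsto(\g,\underline{G},\Ad,d\iota_e)$ of Proposition~\ref{prop:Gtopair}. Your sheaf $U\mapsto\Hom_{\RU(\l)}(\RU(\q),\RC^\infty(U))$ is the structure sheaf of the quotient $(L\ltimes\CG_{\q})/\CG_{\l}$ of Proposition~\ref{prop:quo} (compare Lemma~\ref{lem:OtoHoml} and Proposition~\ref{prop:OofLLqLl}). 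The paper does not reprove the theorem. Its decisive input is Proposition~\ref{prop:quo}(d), that the two constructions are quasi-inverse, which rests on the quotient theory of \cite[Theorem 7.8]{CSW4}. That is exactly the step you leave as ``the main obstacle'', and the route you propose does not reach it. CKMM identifies the elements of $C$ supported at $e$ with $\RU(\g)$, and translation identifies those supported at $g$ with $\Ev_g\RU(\g)$. But $\sum_{g}\Ev_g\RU(\g)$ is only the space of finitely supported elements of $C$. This is a dense subspace, and a proper one as soon as $\dim\underline{G}>0$: it then contains no nonzero smooth density on $\underline{G}$. Knowing the formal neighbourhood of each point does not by itself determine $\CO_G$. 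What has to be proved is that the formal directions are glued smoothly along $\underline{G}$ in the way the Lie pair predicts.

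Concretely, you need that $\RD_c^{-\infty}(\underline{G})\otimes_{\RU(\underline{\g})}\RU(\g)\to C$, $\tau\otimes\eta\mapsto({}^t\iota^*\tau)\,\eta$, is an isomorphism of LCS. Equivalently, $f\mapsto\bigl((g,\eta)\mapsto\la\Ev_g\eta,f\ra\bigr)$ must map $\CO_G(G)$ onto $\Hom_{\RU(\underline{\g})}(\RU(\g),\RC^\infty(\underline{G}))$. Your pointwise information gives injectivity, since a formal function whose Taylor expansion vanishes at every point is zero. Surjectivity, however, needs a genuinely local analytic argument. In a chart $(\R^n)^{(k)}$, with $f=\sum_\alpha f_\alpha y^\alpha$, you must show that the left-invariant operators $\Ev_g\,\mathrm{symm}(\eta)$, for $\eta\in\RS(\mathfrak{c})$ and $\mathfrak{c}$ a complement of $\underline{\g}$ in $\g$, recover the $f_\alpha$ through an invertible system with coefficients smooth in $g$. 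Alternatively, you need an inverse function theorem for formal manifolds, applied to the comparison morphism $(\underline{G}\ltimes\CG_{\g})/\CG_{\underline{\g}}\to G$; this morphism is the identity on underlying spaces and bijective on point-supported distributions at each point. Neither comes out of CKMM plus translation.

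Two smaller points are asserted rather than proved. First, the group law on your sheaf requires checking that \eqref{eq:mulofDLLq} sends $\RU(\l)$-equivariant maps to $\RU(\l)\otimes\RU(\l)$-equivariant ones; this is where both Lie pair axioms are used. The paper instead obtains the multiplication by descent from $L\ltimes\CG_{\q}$ (Proposition~\ref{prop:quo}(c)). Second, the finite-dimensionality of $\mathrm{Prim}(C)$ and the injectivity of $d\iota_e$ are not ``direct''; the paper takes them from \cite[Proposition 5.8]{CSW4} and \cite[Lemma 2.7]{CSW3}.
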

  
Precisely, for every formal Lie group $(G,\CO_G)$, the space $\CO_G(G)$ is a Hopf formal algebra. By taking the dual, the space $\RD_c^{-\infty}(G;\CO_G)=(\CO_G(G))'$ is a Hopf formal coalgebra.
Moreover, together with the adjoint representation $\mathrm{Ad}:\underline G  \curvearrowright \g$ and the canonical Lie algebra embedding $\underline{\g}\rightarrow \g$,  the pair $(\g,\underline{G})$ is a Lie pair, where $\mathfrak{g}$ is the complex Lie algebra of $G$, and $\underline{\mathfrak{g}}$ is the complexified  Lie algebra of $\underline{G}$. The above constructions yield equivalences between the four categories in Theorem \ref{thm:equiGqL}. See \cite{CSW4} or  Section 2 for more details. 

Let $G$ be a formal Lie group. By Corollary \ref{cor:Gembed},
\be\label{eq:GtoEv} G\rightarrow \CG(\RD_c^{-\infty}(G;\CO_G)),\quad g\mapsto \Ev_g\ee is an isomorphism between topological groups, where $\Ev_g\in \RD_c^{-\infty}(G;\CO_G)=(\CO_G(G))'$ is the composition of 
\be\label{evg}
\CO_G(G)\xrightarrow{\textrm{taking the germ}} \CO_{G,g}\xrightarrow{\textrm{quotient map}} \CO_{G,g}/\m_{G,g}=\BC, \ee 
and $\m_{G,g}$ is the maximal ideal of the  stalk $\CO_{G,g}$. 
In view of this, we make the following definition. 
\begin{dfn}
A \textbf{representation of } $G$ is a continuous module of the Hopf formal coalgebra $\RD_c^{-\infty}(G;\CO_G)$. Equivalently, it is a $\RD_c^{-\infty}(G;\CO_G)$-module $(\pi,E)$ such that the induced map 
\[G\times E\rightarrow E,\quad (g,u)\mapsto\Ev_g.u\]
is continuous. \end{dfn}

By \eqref{eq:GtoEv} and \eqref{eq:barreled}, the following remark is obvious. \begin{remarkd}Every $\RD_c^{-\infty}(G;\CO_G)$-module $(\pi,E)$ with $E$ barreled is a representation of $G$.
\end{remarkd}

The following theorem identifies representations of $G$ with modules of the corresponding Lie pair $(\g,\underline{G})$ and comodules of the corresponding Hopf formal algebra $\CO_G(G)$.

\begin{thmd}\label{thm:eqMR}
Let $G$ be a formal Lie group. Then the following categories are isomorphic to each other:
\begin{enumerate}
    \item the category of $(\mathfrak{g},\underline{G})$-modules;
    \item the category of comodules of the Hopf formal algebra $\CO_G(G)$; and
    \item the category of representations of $G$.
\end{enumerate}
\end{thmd}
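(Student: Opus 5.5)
The plan is to build explicit functors in both directions between each pair of categories, check that they are mutually inverse, and note that each is the identity on underlying spaces and on morphisms. This gives isomorphisms of categories rather than mere equivalences. I will connect (2) with (3) by duality, and (3) with (1) by restricting a module structure to distributions supported at the identity and to the group-like elements.

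\textbf{(2) and (3).} Write $A=\CO_G(G)$ and $C=\RD_c^{-\infty}(G;\CO_G)=A'$. Given a comodule $\varrho:E\to E\wt\otimes_\pi A$, define
\[
\eta.u:=(\mathrm{id}_E\otimes\eta)(\varrho(u)),\qquad \eta\in C,\ u\in E.
\]
This map is separately continuous. The delicate point is to see that it factors continuously through $C\wt\otimes_{\Ri}E$. For this I will use that $A$ is a nuclear Fréchet-type (or strict inductive-limit) space, as established in \cite{CSW1,CSW2}. Then $E\wt\otimes_\pi A$ is identified with a space of continuous linear maps $C\to E$ (an $\varepsilon$-product), and evaluation gives the required separately continuous bilinear map, which extends by the universal property of $\otimes_\Ri$ and quasi-completeness. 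Coassociativity and counitality of $\varrho$ become the two diagrams \eqref{eq:RepchianruleofD}; here one uses that the multiplication of $C$ is the transpose of $\Delta$, together with \eqref{eq:DMM=DMDM}. Continuity of $G\times E\to E$, $(g,u)\mapsto \Ev_g.u$, follows from continuity of $g\mapsto\Ev_g$ (Corollary \ref{cor:Gembed}) and equicontinuity arguments.

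Conversely, given a representation $\pi$, define $\varrho(u)$ as the map $\eta\mapsto\eta.u$, viewed as an element of $C'\varepsilon E\cong A\wt\otimes_\pi E$. This again uses nuclearity and the reflexivity $C'=A$. Equivariant maps correspond tautologically.

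\textbf{(3) to (1).} Restrict $\pi$ to $\underline G\cong\CG(C)$ via \eqref{eq:GtoEv}; this is a continuous group action by hypothesis. Next restrict to $\g\subset C$, viewed as the primitive distributions supported at $e$. Continuity of this action is clear, and the Lie bracket is respected because the commutator in $C$ of primitive elements is their bracket. For \eqref{eq:qLmodule1}, use $\Ev_g\,\eta\,\Ev_{g^{-1}}=\Ad_g\eta$ in $C$, as recorded in \cite{CSW4}. For \eqref{eq:qLmodule2}, use that $(\Ev_{\exp t\tau}-\Ev_e)/t\to\iota(\tau)$ in $C$, together with continuity of $\pi$ on bounded sets.

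\textbf{(1) to (3), the main obstacle.} Starting from a $(\g,\underline G)$-module, I must extend the action to all of $C$. The structural input from \cite{CSW4} is the decomposition
\[
C\cong \RD_c^{-\infty}(\underline G)\wt\otimes_{\Ri}\mathrm U(\g)\quad\text{(or }\mathrm U(\g)\otimes_{\mathrm U(\underline{\g})}\RD_c^{-\infty}(\underline G)\text{)},
\]
with compactly supported distributions on the Lie group $\underline G$ acting through its smooth representation. The plan is as follows. First, show that each $u\in E$ is a smooth vector for $\underline G$, in the sense that the orbit map $\underline G\to E$ is smooth; this should follow from \eqref{eq:qLmodule2} together with the $\g$-action being continuous, using the $\Ad$-relation to control derivatives at all points. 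Second, define $T.u:=\int T(g)\,g.u$ for $T\in\RD_c^{-\infty}(\underline G)$ by pairing with the smooth orbit map, which requires quasi-completeness of $E$. Third, set $(T\otimes X).u:=T.(X.u)$. Fourth, check that this is well-defined modulo the $\mathrm U(\underline\g)$-relations, using \eqref{eq:qLmodule2}, and that it is associative, using \eqref{eq:qLmodule1}. Joint continuity on $C\wt\otimes_\Ri E$ is where I expect the most trouble. I would deduce it from separate continuity plus the inductive tensor topology, which only demands separate continuity. Finally, the composites (1)$\to$(3)$\to$(1) and (3)$\to$(1)$\to$(3) are the identity: the first is immediate, and the second holds because $\g$ together with the span of $\{\Ev_g\}$ generate a dense subalgebra of $C$.
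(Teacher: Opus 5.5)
Your architecture differs from the paper's. You build four functors: you link (2) and (3) by $\varepsilon$-product duality, and (3) and (1) by restriction and by integrating the group action against $\RD^{-\infty}_c(\underline G)$. The paper instead runs a single cycle $\mathcal{C}\mathrm{om}_{\CO_G(G)}\to\mathcal{R}\mathrm{ep}_G\to\mathcal{M}\mathrm{od}_{(\g,\underline G)}\to\mathcal{C}\mathrm{om}_{\CO_G(G)}$.

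However, your direction (3)$\to$(2) has a real gap. You never prove that $u\mapsto\varrho(u)$ is continuous from $E$ to $E\wt\otimes_\pi A$, and nuclearity and reflexivity cannot give it. Under $E\wt\otimes_\pi A\cong A\,\varepsilon\,E$, the topology is uniform convergence on equicontinuous subsets of $A'=C$. Since $A$ is barrelled, these are exactly the bounded subsets of $C$. So continuity of $\varrho$ means that $\{u\mapsto\eta.u\}_{\eta\in B}$ is equicontinuous on $E$ for every bounded $B\subset C$. Separate continuity of $\pi$ gives this only when $E$ is barrelled (Banach--Steinhaus). For a general quasi-complete $E$ it must come from the joint continuity of $G\times E\to E$. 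That is precisely the hypothesis distinguishing representations from arbitrary $C$-modules, and you never use it in this direction. It cannot be dispensed with: as written, your argument applies verbatim to every $C$-module, and your (2)$\to$(3) would then make every $C$-module continuous. This is false. For instance, $\mathcal{D}'(\R)$ with the weak-$*$ topology, with $\RD^{-\infty}_c(\R)$ acting by convolution, is a $C$-module whose translation action is only separately continuous.

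To repair this, you need a description of the bounded subsets of $C\cong\RD^{-\infty}_c(\underline G)\otimes_{\RU(\underline{\g})}\RU(\g)$. They have common compact support and bounded order in the distribution factor, and only finitely many $\RU(\g)$-components. Combine this with the equicontinuity of $\{u\mapsto g.u\}_{g\in K}$ for compact $K\subset\underline G$, which follows from joint continuity.

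The paper packages exactly this and avoids your integration step (1)$\to$(3) altogether. It defines the comodule directly from the $(\g,\underline G)$-module as $u\mapsto((g,\eta)\mapsto g.(\eta.u))$ in $\Hom_{\RU(\underline{\g})}(\RU(\g),\RC^\infty(\underline G;E))=E\wt\otimes_\pi\CO_G(G)$ (see \eqref{eq:HOME}). Continuity of this map reduces to continuity of the orbit maps $E\to\RC^\infty(\underline G;E)$, and the $C$-action then comes for free by pairing.

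A smaller point: in (2)$\to$(3), factoring through $C\wt\otimes_\Ri E$ is automatic from separate continuity. The real content there is continuity of $\eta\mapsto\eta.u$ on $C$, which your $\varepsilon$-product argument does supply.
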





The remainder of this paper is organized as follows. 
Section~2 recalls the notions of Hopf formal algebras, Hopf formal coalgebras, and Lie pairs associated with formal Lie groups. 
Section~3 provides explicit descriptions of the Hopf formal algebra and the Hopf formal coalgebra corresponding to a Lie pair $(\mathfrak{q}, L)$. 
Section~4 constructs modules of Lie pairs on function spaces over formal manifolds. 
Finally, Section~5 is devoted to the proof of Theorem~\ref{thm:eqMR}.


\section{Associated structures of formal Lie groups}
\subsection{Hopf formal algebras and Hopf formal coalgebras}
 In this subsection, we recall the Hopf formal algebra and the Hopf formal coalgebra associated with a formal Lie group.

Recall from Section \ref{sec:comodofHopf} that for each formal manifold $M$, the LCS $\CO_M(M)$ is a formal algebra, and the LCS $\RD^{-\infty}_c(M;\CO_M)$ is a formal coalgebra. 

For each morphism \[\varphi=(\overline\varphi, \varphi^*):\ (M_1,\CO_{M_1})\rightarrow (M_2,\CO_{M_2}),\]the homomorphism $\varphi^*: \CO_{M_2}(M_2)\rightarrow \CO_{M_1}(M_1)$ is a continuous homomorphism between formal algebras (see \cite[Theorem 1.8]{CSW1}), and the transpose \be \label{eq:tvarphi*}{}^t\varphi^*: \ \RD_c^{-\infty}(M_1;\CO_{M_1})\rightarrow \RD_c^{-\infty}(M_2;\CO_{M_2})\ee of $\varphi^*$ is a continuous homomorphism between formal coalgebras.

By \cite[Theorem 5.11]{CSW1} and \cite[the proof of Proposition 9.2]{CSW4}, we have the following result.
\begin{prpd}\label{prop:equiMO} The following categories are equivalent to each other:
\begin{enumerate}
	\item the category of formal manifolds;
	\item the opposite category of the category of formal algebras; and 
	\item the category of formal coalgebras.
\end{enumerate}
\end{prpd}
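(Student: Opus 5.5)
The statement is Proposition \ref{prop:equiMO}: formal manifolds are equivalent to the opposite of formal algebras and to formal coalgebras. I would prove it in two stages, using the functors $M\mapsto \CO_M(M)$ and $M\mapsto \RD_c^{-\infty}(M;\CO_M)$. On morphisms these send $\varphi$ to $\varphi^*$ and to ${}^t\varphi^*$, respectively, as in \eqref{eq:tvarphi*}.

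\textbf{Formal manifolds versus formal algebras.} The functor $M\mapsto \CO_M(M)$ is essentially surjective onto formal algebras by Definition \ref{def:foralg}(a). So I only need that it is fully faithful, i.e.\ that
\[
\Hom(M_1,M_2)\to \Hom_{\mathrm{cont.\ alg}}(\CO_{M_2}(M_2),\CO_{M_1}(M_1)),\qquad \varphi\mapsto\varphi^*,
\]
is bijective. I would take this from \cite[Theorem 5.11]{CSW1}, which I understand to give exactly this bijection. If I had to reprove it, I would use the same ingredients as in the smooth case:
\begin{itemize}
\item points of $M$ are identified with continuous characters of $\CO_M(M)$, via $a\mapsto \Ev_a$ as in \eqref{evg};
\item $\overline\varphi$ is then recovered by pulling characters back;
\item the sheaf map is recovered by localization, using partitions of unity (paracompactness) and the fact that $\CO_M(U)$ is determined by $\CO_M(M)$.
\end{itemize}

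\textbf{Formal algebras versus formal coalgebras.} Here I would show that transposition $A\mapsto A'$ is an equivalence between the opposite category of formal algebras and the category of formal coalgebras.
\begin{itemize}
\item \emph{Well-definedness.} By \eqref{eq:OM'} and \eqref{eq:DMM=DMDM}, the transpose of the multiplication \eqref{eq:mulofOM} lands in $A'\wt\otimes_{\Ri}A'$, and the transpose of the unit gives the counit. So $A'$ is a formal coalgebra, and every formal coalgebra arises this way up to isomorphism.
\item \emph{Faithfulness.} $\CO_M(M)$ is reflexive (it is a nuclear Fréchet-type space locally, and in general reflexive by \cite{CSW2}). Hence $\phi\mapsto{}^t\phi$ is injective on continuous linear maps, and $A\cong A''$ recovers $A$ from $A'$.
\item \emph{Fullness.} Take a continuous coalgebra map $\psi:\RD_c^{-\infty}(M_1;\CO_{M_1})\to \RD_c^{-\infty}(M_2;\CO_{M_2})$. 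Its transpose ${}^t\psi:\CO_{M_2}(M_2)\to\CO_{M_1}(M_1)$ is continuous for the strong topologies, using reflexivity. Compatibility of $\psi$ with comultiplication and counit transposes, via \eqref{eq:DMM=DMDM}, to compatibility of ${}^t\psi$ with multiplication and unit. So ${}^t\psi$ is a continuous algebra homomorphism with ${}^t({}^t\psi)=\psi$.
\end{itemize}
Composing the two stages gives the equivalence of all three categories.

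\textbf{Main obstacle.} The delicate step is the reflexivity and duality bookkeeping in the fullness argument. One must identify $(A'\wt\otimes_{\Ri}A')'$ with $A\wt\otimes_\pi A$, and check that transposes of continuous maps between the strong duals are continuous between the original spaces. I would first try to deduce this from \cite[Proposition 5.14 and Lemma A.12]{CSW2} together with reflexivity of $\CO_M(M)$, which I take as the content of \cite[proof of Proposition 9.2]{CSW4}.
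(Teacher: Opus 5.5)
Your proposal is correct and follows the paper's route. The paper gives no independent argument: it deduces the statement from \cite[Theorem 5.11]{CSW1} (full faithfulness of $M\mapsto\CO_M(M)$, with essential surjectivity built into the definition of formal algebras) and from the transposition argument in \cite[proof of Proposition 9.2]{CSW4}, which rests on reflexivity and the duality identifications \eqref{eq:OM'} and \eqref{eq:DMM=DMDM} exactly as in your second stage. One small point: faithfulness of $\phi\mapsto{}^t\phi$ needs only Hahn--Banach on the Hausdorff target; reflexivity is used only for fullness.
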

 
In Proposition \ref{prop:equiMO}, the equivalence of (1) and (2) is given by the functor
\[M\mapsto \CO_M(M),\quad (\varphi:M_1\rightarrow M_2)\mapsto (\varphi^*: \CO_{M_2}(M_2)\rightarrow \CO_{M_1}(M_1)).\] 
The equivalence of (1) and (3) is given by the functor \[M\mapsto \RD^{-\infty}_c(M;\CO_M),\quad (\varphi: M_1\rightarrow M_2)\mapsto ({}^t\varphi^*:\RD^{-\infty}_c(M_1;\CO_{M_1})\rightarrow \RD^{-\infty}_c(M_2;\CO_{M_2})).\] 
Furthermore, 
the equivalence of (2) and (3) is given by the functors \[A\mapsto A', \quad (\phi: A_1\rightarrow A_2)\mapsto({}^t\phi: A_2'\rightarrow A_1') \]  and \[ C\mapsto C', \quad (\phi: C_1\rightarrow C_2)\mapsto({}^t\phi: C_2'\rightarrow C_1'). \] 
Throughout the paper, for a continuous linear map $\phi: E_1\rightarrow E_2$, the map 
\[{}^t\phi:\ E_2'\rightarrow E_1'\]
denotes the transpose of $\phi$.




Recall from \cite[Theorem 1.9]{CSW1} and \cite[Proposition 5.20]{CSW2} that for any two formal manifolds $M_1$ and $M_2$, \be \label{eq:O_3}\CO_{M_1\times M_2}(M_1\times M_2)=\CO_{M_1}(M_1)\wt\otimes_{\pi}\CO_{M_2}(M_2)=\CO_{M_1}(M_1)\wh\otimes_{\pi}\CO_{M_2}(M_2)\ee as formal algebras and \be\label{eq:D3}
\begin{split}
\RD^{-\infty}_c(M_1\times M_2;\CO_{M_1\times M_2})
&=\RD^{-\infty}_c(M_1;\CO_{M_1})\wt\otimes_{\Ri}
\RD^{-\infty}_c(M_2;\CO_{M_2})\\
&=\RD^{-\infty}_c(M_1;\CO_{M_1})\wh\otimes_{\Ri}
\RD^{-\infty}_c(M_2;\CO_{M_2})
\end{split}
\ee
 as formal coalgebras. 

 Let $G$ be a formal Lie group. Then we have the continuous homomorphisms 
 \be\label{eq:formalhopfalg}
\begin{aligned}
		&m^*:\CO_G(G)\rightarrow \CO_{G\times G}(G\times G)=\CO_G(G) \wt{\otimes}_{\pi}\CO_G(G)\quad\text{(see \eqref{eq:O_3})},\\ 
 	&\varepsilon ^*: \CO_G(G)\rightarrow \BC, \qaq   i^*: \CO_G(G)\rightarrow \CO_G(G)
\end{aligned}
\ee
between formal algebras, 
and the continuous homomorphisms 
\[{}^tm^*: \RD^{-\infty}_c(G;\CO_G)\wt\otimes_{\mathrm i}\RD^{-\infty}_c(G;\CO_G)=\RD^{-\infty}_c(G\times G;\CO_{G\times G})\rightarrow \RD^{-\infty}_c(G;\CO_G) \quad\text{(see \eqref{eq:D3})},\]
   \[ {}^t\varepsilon^*: \BC\rightarrow \RD^{-\infty}_c(G;\CO_G), \qaq {}^ti^*: \RD^{-\infty}_c(G;\CO_G)\rightarrow \RD^{-\infty}_c(G;\CO_G)
\]
between formal coalgebras. 
The following result follows from \cite[Proposition 9.1]{CSW4}.
\begin{prpd}\label{prop:formalliegrouptohopfalg0}
    Let $G$ be a formal Lie group. 
    
\noindent(a) The formal algebra $\CO_G(G)$ is a Hopf formal algebra together with the comultiplication $m^*$, the counit $\varepsilon^*$ and 
the antipode $i^*$.

\noindent(b) The formal coalgebra $\RD^{-\infty}_c(G;\CO_G)$ is a Hopf formal coalgebra,
together with the multiplication ${}^tm^*$, the unit ${}^t\varepsilon^*$ and 
the antipode ${}^ti^*$.  
\end{prpd}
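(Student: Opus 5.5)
The plan is to transport the group axioms of $G$ through the contravariant functor $M\mapsto \CO_M(M)$, and then through the transpose functor $A\mapsto A'$. Both functors are monoidal with respect to products and the relevant tensor products, by \eqref{eq:O_3} and \eqref{eq:D3}.

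For part (a), start from the fact that $\CO_G(G)$ is a formal algebra and that $m^*$, $\varepsilon^*$ and $i^*$ are continuous algebra homomorphisms. This holds by \cite[Theorem 1.8]{CSW1}, together with \eqref{eq:O_3} to identify $\CO_{G\times G}(G\times G)$ with $\CO_G(G)\wt\otimes_\pi\CO_G(G)$.

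\begin{itemize}
\item \emph{Coassociativity.} Apply $\varphi\mapsto\varphi^*$ to the diagram \eqref{eq:chainofG}. Because the functor is contravariant, we obtain $(m^*\otimes\mathrm{id})\circ m^*=(\mathrm{id}\otimes m^*)\circ m^*$.
\item \emph{Compatibility of identifications.} The step needing care is that the identification \eqref{eq:O_3} turns $(m\times \mathrm{id}_G)^*$ into $m^*\otimes \mathrm{id}$. I would argue this using the universal property of the product together with the naturality of \eqref{eq:O_3} in each factor. It suffices to check on the dense image of the algebraic tensor product $\CO_G(G)\otimes\CO_G(G)$, where both maps agree on elementary tensors $f_1\otimes f_2=p_1^*f_1\cdot p_2^*f_2$.
\item \emph{Counit.} The counit axiom follows in the same way from the unit diagram, using $\CO_{\Spec(\BC)}=\BC$.
\item \emph{Antipode.} The antipode identity $\mu\circ(\mathrm{id}\otimes i^*)\circ m^*=\epsilon\circ\varepsilon^*$ comes from applying $(\cdot)^*$ to \eqref{eq:comofi}. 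Here one writes $(\mathrm{id}_G\times i)\circ\Delta_G$ and notes that $\Delta_G^*$ corresponds to the multiplication \eqref{eq:mulofOM} under \eqref{eq:O_3}; again this is checked on elementary tensors.
\end{itemize}

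Thus $\CO_G(G)$ is a Hopf $\wt\otimes_\pi$-algebra which is a formal algebra, i.e.\ a Hopf formal algebra.

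For part (b), take transposes. By \eqref{eq:OM'} and \eqref{eq:DMM=DMDM}, transposing the Hopf structure maps gives ${}^tm^*$, ${}^t\varepsilon^*$, ${}^ti^*$ on $\RD_c^{-\infty}(G;\CO_G)$. The coalgebra structure, namely ${}^t$ of multiplication and ${}^t$ of unit, is the given formal coalgebra structure. Transposing each commutative diagram from part (a) gives the corresponding Hopf axioms. This uses the following facts:

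\begin{itemize}
\item Transposes of continuous maps compose contravariantly.
\item Under \eqref{eq:DMM=DMDM}, ${}^t(\phi_1\otimes\phi_2)=({}^t\phi_1)\otimes({}^t\phi_2)$. This is checked by pairing against elementary tensors, which are dense.
\item The triple-tensor analogue of \eqref{eq:DMM=DMDM} is available via \eqref{eq:D3} applied to $G\times G\times G$.
\end{itemize}

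Associativity of ${}^tm^*$ is best stated through $\RD_c^{-\infty}(G^3;\CO_{G^3})$ directly. This sidesteps the failure of associativity of $\wt\otimes_\Ri$: one uses \eqref{eq:D3} twice to identify it with iterated tensor products. Alternatively, apply the covariant functor $\varphi\mapsto{}^t\varphi^*$ to the diagrams of Definition \ref{df:Lirgroup} directly, which is cleaner.

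The main obstacle is the bookkeeping of identifications: showing that \eqref{eq:O_3} and \eqref{eq:D3} are natural, so that $(\varphi_1\times\varphi_2)^*$ corresponds to $\varphi_1^*\otimes\varphi_2^*$ and ${}^t(\varphi_1\times\varphi_2)^*$ to ${}^t\varphi_1^*\otimes{}^t\varphi_2^*$, and that the diagonal corresponds to multiplication or comultiplication. I would handle all of these uniformly by density of the algebraic tensor product (respectively of spans of elementary tensors) together with the continuity of all maps involved, relying on \cite[Proposition 9.1]{CSW4} for the detailed verifications.
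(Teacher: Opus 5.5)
Your proposal is correct and takes essentially the paper's route. The paper deduces the statement directly from \cite[Proposition 9.1]{CSW4}, and your argument is the standard verification behind that citation: you apply $\varphi\mapsto\varphi^*$ (respectively $\varphi\mapsto{}^t\varphi^*$) to the diagrams of Definition~\ref{df:Lirgroup}, and you check the naturality of \eqref{eq:O_3} and \eqref{eq:D3} on dense spans of elementary tensors, which handles the non-associativity of $\wt\otimes_{\Ri}$ through $\RD^{-\infty}_c(G\times G\times G;\CO_{G\times G\times G})$.
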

Throughout this paper, for $\eta_1,\eta_2\in \RD^{-\infty}_c(G;\CO_G)$, set \be\label{eq:eta12} \eta_1\eta_2\,:=\, {}^tm^*(\eta_1\otimes\eta_2)\in \RD^{-\infty}_c(G;\CO_G). \ee

Recall the following result from \cite[Proposition 9.2]{CSW4}. 
\begin{prpd}\label{prop:formalliegrouptohopfalg} 
The following categories are equivalent to each other:
\begin{enumerate}
	\item the category of formal Lie groups;
	\item the opposite category of the category of Hopf formal algebras; and 
	\item the category of Hopf formal coalgebras.
\end{enumerate}
\end{prpd}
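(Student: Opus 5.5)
The plan is to deduce the statement from Proposition \ref{prop:equiMO} by a ``group objects'' argument. The idea is to show that the equivalences there respect finite products, so that they carry group structures to Hopf structures. Formal Lie groups are exactly the group objects in the category of formal manifolds, with respect to the finite products of \cite[Theorem 1.9]{CSW1} and the terminal object $\Spec(\BC)$. So it suffices to show two things. First, the functor $M\mapsto \CO_M(M)$ turns these products into the monoidal structure $\wt\otimes_\pi$ on formal algebras. Second, under this functor group objects correspond exactly to Hopf formal algebras. The coalgebra side is then obtained by transposition.

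\textbf{Step 1 (compatibility with products).} By \eqref{eq:O_3}, $\CO_{M_1\times M_2}(M_1\times M_2)=\CO_{M_1}(M_1)\wt\otimes_\pi\CO_{M_2}(M_2)$, and $\CO_{\Spec(\BC)}=\BC$. I will check that this identification is natural: for morphisms $\varphi_j: N_j\to M_j$ one should have $(\varphi_1\times\varphi_2)^*=\varphi_1^*\otimes\varphi_2^*$. I will also check that the pullback $\Delta_M^*$ along the diagonal is the multiplication map \eqref{eq:mulofOM}, and that pullbacks along the projections are $f\mapsto f\otimes 1$ and $f\mapsto 1\otimes f$. Both facts should follow by testing on the dense subspace $\CO_{M_1}(M_1)\otimes\CO_{M_2}(M_2)$, using continuity from \cite[Theorem 1.8]{CSW1}. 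Consequently, the morphism $\varphi_1\times\varphi_2$ of \eqref{eq:varphi1times2} has pullback $f_1\otimes f_2\mapsto \varphi_1^*(f_1)\varphi_2^*(f_2)$. In particular, among formal algebras $\wt\otimes_\pi$ is the categorical coproduct, with $\BC$ as initial object.

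\textbf{Step 2 (objects).} Let $(G,m,\varepsilon)$ be a formal Lie group. Applying $(\cdot)^*$ to the associativity, unit and inverse diagrams and using Step 1, I obtain the following translations. The diagram \eqref{eq:chainofG} becomes coassociativity of $m^*$. The unit diagram becomes the counit axiom for $\varepsilon^*$. The diagram \eqref{eq:comofi} becomes the antipode identities for $i^*$, because the composite $G\to G\times G\to G$ pulls back to $f\mapsto \mu\circ(\mathrm{id}\otimes i^*)\circ m^*(f)$. This recovers Proposition \ref{prop:formalliegrouptohopfalg0}(a). Conversely, let $A$ be a Hopf formal algebra with $A\cong\CO_G(G)$ as algebras. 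Since the functor is full and faithful (Proposition \ref{prop:equiMO}), the comultiplication, counit and antipode are pullbacks of unique morphisms $m$, $\varepsilon$, $i$. Faithfulness, together with Step 1, turns the Hopf axioms back into the diagrams of Definition \ref{df:Lirgroup}.

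\textbf{Step 3 (morphisms).} A morphism of formal manifolds $\varphi:G_1\to G_2$ satisfies $m_2\circ(\varphi\times\varphi)=\varphi\circ m_1$ if and only if $\varphi^*$ intertwines the comultiplications, again by faithfulness and Step 1. Counits and antipodes are then preserved automatically, exactly as for groups and Hopf algebras. This gives the equivalence of (1) and (2).

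\textbf{Step 4 (coalgebra side).} For (3), I will compose with the equivalence $A\mapsto A'$ between formal algebras and formal coalgebras from Proposition \ref{prop:equiMO}. The identifications \eqref{eq:OM'} and \eqref{eq:DMM=DMDM}, i.e.\ \eqref{eq:D3}, show that transposing $\mu$, $m^*$, $\varepsilon^*$, $i^*$ gives the Hopf $\wt\otimes_\Ri$-algebra structure of Proposition \ref{prop:formalliegrouptohopfalg0}(b). They also show that every Hopf formal coalgebra structure arises this way. Here I use that $C\mapsto C'$ is inverse to $A\mapsto A'$ and that ${}^t(\phi_1\otimes\phi_2)=({}^t\phi_1)\otimes({}^t\phi_2)$ under these identifications.

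The main obstacle I expect is Step 1 and its dual. One must make sure that the abstract identifications \eqref{eq:O_3} and \eqref{eq:D3} are natural, and that they are compatible with the transposes of tensor products of maps. This is a topological issue, since the quasi-completed inductive tensor product is not associative. I would handle it by verifying every identity on algebraic tensors and extending by density and continuity. For the dual side, I would reduce all statements to the algebra side via transposition, since $\wt\otimes_\pi$ is associative.
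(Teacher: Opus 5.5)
Your proposal is correct and follows the route behind the cited \cite[Proposition 9.2]{CSW4}; the paper gives no independent proof and takes Proposition \ref{prop:equiMO} from that same argument. As you do, it transports the group-object axioms through the product-compatible equivalence $M\mapsto\CO_M(M)$ via \eqref{eq:O_3}, then dualizes via \eqref{eq:OM'}, \eqref{eq:D3} and reflexivity. One point to make explicit in Step 2: the antipode of a commutative Hopf $\wt\otimes_\pi$-algebra is a continuous algebra homomorphism (by the usual convolution-inverse argument, which carries over since $\wt\otimes_\pi$ is associative), so full faithfulness can be applied to it to produce $i$.
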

In Proposition \ref{prop:formalliegrouptohopfalg}, 
the equivalence of (1) and (2) is given by the functor
\[G\mapsto \CO_G(G),\quad (\varphi:G_1\rightarrow G_2)\mapsto (\varphi^*: \CO_{G_2}(G_2)\rightarrow \CO_{G_1}(G_1)).\] 
The equivalence of (1) and (3) is given by the functor \[G\mapsto \RD^{-\infty}_c(G;\CO_G),\quad (\varphi: G_1\rightarrow G_2)\mapsto ({}^t\varphi^*:\RD^{-\infty}_c(G_1;\CO_{G_1})\rightarrow \RD^{-\infty}_c(G_2;\CO_{G_2})).\] 
Furthermore, 
the equivalence of (2) and (3) is given by the functors \[A\mapsto A', \quad (\phi: A_1\rightarrow A_2)\mapsto({}^t\phi: A_2'\rightarrow A_1') \]  and \[ C\mapsto C', \quad (\phi: C_1\rightarrow C_2)\mapsto({}^t\phi: C_2'\rightarrow C_1'). \] 
\subsection{The Dirac embedding}
Let $(M,\CO_M)$ be a formal manifold. 
As in \eqref{evg}, for each $a\in M$, write $\Ev_a\in \RD^{-\infty}_c(M;\CO_M)=(\CO_M(M))'$ for  the composition of 
\be\label{eq:Ev_a}\Ev_a:\ \CO_M(M)\xrightarrow{\textrm{taking the germ}} \CO_{M,a}\xrightarrow{\textrm{quotient map}} \CO_{M,a}/\m_{M,a}=\C, \ee where $\m_{M,a}$ is the maximal ideal of $\CO_{M,a}$. 
Throughout this paper, for every $f\in \CO_M(M)$, let $f(a)$ denote the image of $f$ under the map $\Ev_a$.

It follows from \cite[Proposition 5.5 and Lemma 5.8]{CSW1} that the map 
\be\label{eq:Dirac} \Ev:\ M\rightarrow \RD^{-\infty}_c(M;\CO_M),\quad a\mapsto \Ev_a \ee
is injective. 

When $M$ is a smooth manifold, the map \eqref{eq:Dirac} is the usual
Dirac embedding (\cf \cite{Mi}).
\begin{prpd}\label{prop:dirac}
 The map \eqref{eq:Dirac} is a topological embedding.
\end{prpd}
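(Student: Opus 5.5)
We must show three things: that $\Ev$ is continuous into $\RD^{-\infty}_c(M;\CO_M)=(\CO_M(M))'$ with the strong topology; that it is injective, which is already known; and that the inverse is continuous on the image. Both topological statements are local on $M$, so the plan is to reduce to charts $(\R^n)^{(k)}$ wherever possible.

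\textbf{Continuity.} The strong topology is given by the seminorms $\eta\mapsto\sup_{f\in B}|\eta(f)|$, where $B$ runs over the bounded subsets of $\CO_M(M)$. It therefore suffices to show the following: for every bounded $B$ and every $a_0\in M$,
\[
\sup_{f\in B}|f(a)-f(a_0)|\to 0 \quad \text{as } a\to a_0.
\]
Take a chart $U\cong(\R^n)^{(k)}$ around $a_0$ and a compact ball $K\subset U$ containing $a_0$ in its interior. The restriction map $\CO_M(M)\to\CO_M(U)=\RC^\infty(U)[[y_1,\dots,y_k]]$ is continuous, so the image of $B$ is bounded. Under the smooth topology of \cite[Definition 4.1]{CSW1}, this means every coefficient seminorm is bounded. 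Moreover, $f(a)$ is the value at $a$ of the constant coefficient $f_0\in\RC^\infty(U)$. Boundedness of $B$ then bounds the first derivatives of $f_0$ on $K$ uniformly in $f\in B$. The mean value theorem gives
\[
|f(a)-f(a_0)|\le C\,|a-a_0| \qquad \text{for all } f\in B,\ a\in K,
\]
and this yields continuity.

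\textbf{Openness onto the image.} Let $V\subset M$ be open and $a_0\in V$. We need a strong neighborhood $W$ of $\Ev_{a_0}$ with $\Ev^{-1}(W)\subset V$. Choose a bump function $\phi\in\CO_M(M)$, coming for instance from a smooth function on the reduction composed with a chart, such that $\phi(a_0)=1$ and $\phi$ is supported in $V$; paracompactness and partitions of unity for formal manifolds make this available. Take
\[
W:=\{\eta : |\eta(\phi)-1|<1/2\}.
\]
This set is open even for the weak-$*$ topology, hence strongly open. If $\Ev_a\in W$, then $|\phi(a)-1|<1/2$, so $\phi(a)\neq0$, which forces $a\in V$.

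The only step needing genuine care is the uniform estimate in the continuity part. It requires knowing that bounded sets in the smooth topology control the $C^1$ norm of the reduced function $f_0$ on compact subsets of a chart. I expect this to follow directly from the defining seminorms in \cite[Definition 4.1]{CSW1}, possibly after invoking the local description \cite[Example 4.2]{CSW1}.
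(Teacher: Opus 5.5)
Your proposal is correct and follows essentially the paper's route. For continuity, you localize to a chart $(\R^n)^{(k)}$ and apply the mean value theorem to the constant coefficient $f_0$; the paper gets this reduction from \cite[Lemma 5.3]{CSW2} and \cite[Examples 3.11 and 4.4]{CSW1}, while you use directly that restriction to the chart sends bounded sets to bounded sets, which amounts to the same thing. For the inverse, you pass to the weak-$*$ topology and separate points with a formal function; your explicit bump function proves inline what the paper cites as \cite[Lemma 5.8]{CSW1}.

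One cosmetic point: $\phi$ needs no partition of unity. A compactly supported smooth function in a chart, viewed as constant in the $y$-variables and extended by zero via the sheaf axiom, already works, since its support is compact and hence closed in the Hausdorff space $M$.
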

\begin{proof}
  We first prove that the inverse map \[\Ev^{-1}: \ \Ev(M)\rightarrow M\] is continuous. Let $\{b_\alpha\}_{\alpha\in A}$ be a net in $M$ such that \[\lim_{\alpha\in A} \Ev_{b_\alpha}=\Ev_b \quad \text{(in the LCS $\RD^{-\infty}_c(M;\CO_M)$)}\] for some $b\in M$. 
Note that, for every bounded subset
$B\subset \CO_M(M)$, one has that
\[
\lim_{\alpha\in A}
\sup_{f\in B}
\left|\langle \Ev_{b_\alpha}-\Ev_b,f\rangle\right|
=
\lim_{\alpha\in A}
\sup_{f\in B}|f(b_\alpha)-f(b)|
=0.
\]
Then for each 
$f_0\in \CO_M(M)$,
\[\lim_{\alpha\in A} f_0(b_{\alpha
})=f_0(b)\quad \text{(replace $B$ by $\{f_0\}$)}.\] This implies that 
 $\lim_{\alpha\in A} b_{\alpha}=b$ in the underlying topological space $M$ (see \cite[Lemma 5.8]{CSW1}), as desired.
 
 We now prove the continuity of  \eqref{eq:Dirac}. 
 By \cite[Lemma 5.3]{CSW2}, we assume without loss of generality  that $M=(\BR^n)^{(k)}$ for some $n,k\in\BN$.
 Let $\{b_\alpha\}_{\alpha\in A}$ be a net in $M$ with $\lim_{\alpha\in A}b_\alpha= b$ in $M$. It suffices to prove that 
 \be \label{eq:toEvb}
\lim_{\alpha\in A}\Ev_{b_\alpha}= \Ev_b
\quad \text{in } \RD^{-\infty}_c(M;\CO_M),
\ee
namely, for each bounded subset $B\subset \CO_M(M)$,  \be\label{eq:toEvb2} \lim_{\alpha\in A}\sup_{f\in B}|\la \Ev_{b_{\alpha}}-\Ev_b,f\ra|=\lim_{\alpha\in A}\sup_{f\in B}|f(b_{\alpha})-f(b)|=0.\ee
 When $k=0$, \eqref{eq:toEvb2} follows from the mean value theorem in several variables (see \cite[Theorem 12.9]{Ap}). 
 This, together with \cite[Examples 3.11 and 4.4]{CSW1}, implies that 
 \eqref{eq:toEvb2} holds for arbitrary $k\in\BN$, as desired. This finishes the proof of the proposition.
\end{proof}
%

Let $G$ be a formal Lie group. Recall from \eqref{eq:LC} that $\CG(\RD^{-\infty}_c(G;\CO_G))$ is the group of group-like elements in $\RD^{-\infty}_c(G;\CO_G)$.
The following result follows from \cite[Proposition 5.5]{CSW1} and Proposition \ref{prop:formalliegrouptohopfalg}.
\begin{cord}\label{cor:Gembed}
Let $G$ be a formal Lie group. Then the image of the topological  embedding 
\be\label{eq:DiracG} G\rightarrow  \RD^{-\infty}_c(G;\CO_G),\quad g\mapsto \Ev_g \ee is $\CG(\RD^{-\infty}_c(G;\CO_G))$.  In particular, 
$\CG(\RD^{-\infty}_c(G;\CO_G))$ is a topological group isomorphic to $G$.
\end{cord}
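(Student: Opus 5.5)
The plan is to combine Proposition \ref{prop:dirac}, which already gives a topological embedding $g\mapsto \Ev_g$, with an algebraic identification of its image. It then remains to show two things: the image equals $\CG(\RD^{-\infty}_c(G;\CO_G))$, and the bijection onto this image is a group homomorphism. The topological-group statement will then follow by transport of structure, since the underlying space of $G$ with the group law of $\underline G$ is a (Lie, hence topological) group.

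\emph{Step 1: group-like elements are points.} Let $\eta\in \RD^{-\infty}_c(G;\CO_G)=(\CO_G(G))'$. The comultiplication is the transpose of the multiplication \eqref{eq:mulofOM}, and \eqref{eq:DMM=DMDM} identifies $\eta\otimes\eta$ with the functional $f_1\otimes f_2\mapsto \eta(f_1)\eta(f_2)$. Hence $\Delta(\eta)=\eta\otimes\eta$ means $\eta(f_1f_2)=\eta(f_1)\eta(f_2)$ for all $f_1,f_2\in\CO_G(G)$.
\begin{itemize}
\item If $\eta\neq 0$, choose $f$ with $\eta(f)\neq0$. Then $\eta(f)=\eta(f\cdot 1)=\eta(f)\eta(1)$ gives $\eta(1)=1$.
\item So group-like elements are exactly the continuous unital algebra homomorphisms $\CO_G(G)\to\BC$.
\item By the equivalence (1)$\Leftrightarrow$(2) of Proposition \ref{prop:equiMO}, applied to morphisms $\Spec(\BC)\to G$, or directly by \cite[Proposition 5.5]{CSW1}, every such character equals $\Ev_g$ for a unique $g\in G$.
\item Conversely, each $\Ev_g$ is a character by \eqref{eq:Ev_a}, since both germ-taking and the quotient map are algebra homomorphisms.
\end{itemize}
Thus the image of \eqref{eq:DiracG} is precisely $\CG(\RD^{-\infty}_c(G;\CO_G))$.

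\emph{Step 2: compatibility with the group laws.} By Proposition \ref{prop:formalliegrouptohopfalg0}(b), the product \eqref{eq:eta12} is ${}^tm^*$ and the unit is ${}^t\varepsilon^*(1)$.
\begin{itemize}
\item For the unit: ${}^t\varepsilon^*(1)(f)=\varepsilon^*(f)=f(e)$, so ${}^t\varepsilon^*(1)=\Ev_e$.
\item For products: I will verify that under \eqref{eq:D3} the element $\Ev_g\otimes\Ev_h$ corresponds to $\Ev_{(g,h)}$ on $G\times G$. Both are continuous characters of $\CO_{G\times G}(G\times G)=\CO_G(G)\wt\otimes_\pi\CO_G(G)$. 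They agree on elementary tensors $f_1\otimes f_2$, which is the pullback of $f_1,f_2$ along the projections, and these span a dense subspace, so the two characters are equal.
\item Then $(\Ev_g\Ev_h)(f)=\Ev_{(g,h)}(m^*f)=f(m(g,h))=f(gh)$, i.e.\ $\Ev_g\Ev_h=\Ev_{gh}$. Here the middle equality uses that $m^*$ composed with evaluation at $(g,h)$ is evaluation at $\overline m(g,h)$, because $m^*$ induces local homomorphisms on stalks.
\end{itemize}
So $g\mapsto\Ev_g$ is a group isomorphism onto $\CG(\RD^{-\infty}_c(G;\CO_G))$.

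\emph{Step 3: topology.} By Proposition \ref{prop:dirac}, this isomorphism is a homeomorphism onto its image equipped with the subspace topology. Multiplication and inversion on $\CG(\cdots)$ are therefore conjugates of those of the topological group $G$, and hence continuous. The main obstacle I expect is the identification $\Ev_g\otimes\Ev_h=\Ev_{(g,h)}$ through the quasi-completed tensor identifications \eqref{eq:O_3} and \eqref{eq:D3}. The key points are the density of the algebraic tensor product together with the continuity of characters; the rest is formal.
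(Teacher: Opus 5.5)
Your proposal is correct and follows essentially the paper's route. The paper combines the embedding of Proposition \ref{prop:dirac} with \cite[Proposition 5.5]{CSW1}, which identifies the characters of $\CO_G(G)$ with the $\Ev_g$, and with the Hopf-structure dictionary of Proposition \ref{prop:formalliegrouptohopfalg}; your Steps 1--3, including the direct check $\Ev_g\Ev_h=\Ev_{gh}$ via density of elementary tensors, simply unpack these citations.
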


\subsection{Complex Lie algebras of formal Lie groups} 

Let $G$ be a formal Lie group. We write $\mathrm{Prim}(\RD^{-\infty}_c(G;\CO_G))$ for the set of primitive elements of the Hopf formal coalgebra $\RD^{-\infty}_c(G;\CO_G)$, namely,
\be \label{eq:Prim}
\mathrm{Prim}(\RD^{-\infty}_c(G;\CO_G))=\{\eta\in \RD^{-\infty}_c(G;\CO_G):\ \Delta(\eta)=\eta\otimes \Ev_e+\Ev_e\otimes \eta\},
\ee  where $\Delta$ is the comultiplication of the Hopf formal coalgebra $\RD^{-\infty}_c(G;\CO_G)$. 
It is well-known that $\mathrm{Prim}(\RD^{-\infty}_c(G;\CO_G))$ forms a complex Lie algebra under the commutator bracket
\[(\eta_1,\eta_2)\mapsto [\eta_1,\eta_2]:=\eta_1\eta_2-\eta_2\eta_1 \quad\text{(see \cite[(2.19)]{CSW4})}. \]

\begin{dfn}
     The complex Lie algebra  $\mathrm{Prim}(\RD^{-\infty}_c(G;\CO_G))$ is called the \textbf{complex Lie algebra} of the formal Lie group $G$.
\end{dfn}
Here and in the following, 
the complex Lie algebra of the formal Lie group $G$ is denoted by $\Lie_{\BC}(G)$, or simply by $\g$ if there is no confusion.  
When $G$ is a Lie group,  $\Lie_{\BC}(G)$ is just the complexified Lie algebra of $G$. 

As usual, the universal enveloping algebra $\RU(\g)$ is a Hopf algebra. Equip $\RU(\g)$ with the finest locally convex topology.  Then $\RU(\g)$ becomes a Hopf formal coalgebra.
Moreover, the algebra homomorphism \be \label{eq:Utodist}\RU(\g)\rightarrow \RD^{-\infty}_c(G;\CO_G) \ee induced by the inclusion $\g\rightarrow \RD^{-\infty}_c(G;\CO_G)$
      is a topological embedding between Hopf formal coalgebras (see \cite[Proposition 2.8]{CSW4} and \cite[Section 5.3]{CSW2}).
In view of this, we will often identify $\RU(\g)$ as a subspace of $\RD^{-\infty}_c(G;\CO_G)$ 
without further explanation.

Let
\[
\varphi=(\bar\varphi,\varphi^*):\, G\rightarrow H
\] be a homomorphism between formal Lie groups. 
The homomorphism 
\[
{}^t\varphi^*:\, \RD_c^{-\infty}(G;\CO_{G})\rightarrow \RD_c^{-\infty}(H;\CO_{H})
\]
of Hopf formal coalgebras restricts to a Lie algebra homomorphism
\[
d\varphi_e:={}^t\varphi^*|_{\mathrm{Lie}_\C(G)}:\ \mathrm{Lie}_\C(G)
\rightarrow \mathrm{Lie}_\C(H), 
\] 
to be called the differential of $\varphi$ at $e$ (see \cite[Proposition 2.8]{CSW4}).


\subsection{Lie pairs associated to formal Lie groups}
For each formal manifold $(M,\CO_M)$ and its reduction $\underline{M}=(M,\underline{\CO_M})$ (see \eqref{eq:underM}), there is a canonical morphism 
\[\iota=(\overline{\iota},\iota^*): \ \underline M\rightarrow M\]
between formal manifolds, where $\overline{\iota}=\mathrm{id}_M$ is the identity map on the underlying topological space $M$, and for every open subset $U$ of $M$, $\iota^*_{U}$ is the homomorphism  \be \label{eq:iotaU}\iota^*_{U}:\ \CO_M(U)\rightarrow (\CO_M/\m_{\CO_M})(U)=\underline{\CO_M}(U).\ee 

Let 
$
\varphi:\ (M,\CO_M)\rightarrow (M',\CO_{M'})
$
be a morphism of formal manifolds. Then there is a unique smooth map
\be\label{eq:reductionvarphi}
\underline{\varphi}:\ \underline{M}\rightarrow \underline{M'}
\ee between smooth manifolds 
such that the diagram
\be \label{eq:underlinecommdiag}
\begin{CD}
	M @>  \varphi >> M'\\
	@A\iota AA          @AA\iota A\\
	\underline{M}@> \underline{\varphi} >>  \underline{M'}
\end{CD}
\ee
commutes. We call $\underline{\varphi}$ the reduction of $\varphi$.

Let $G$ be a formal Lie group. 
Recall from \cite[Example 3.5]{CSW4} that together with the multiplication morphism $\underline{m}$ and the unit morphism $\underline{\varepsilon}$, the reduction $\underline{G}$ becomes a Lie group.  
In addition, the morphism $\iota: \underline{G}\rightarrow G$ is a homomorphism between formal Lie groups (see \cite[Example 3.5]{CSW4}), and its differential at $e$ 
\[d\iota_e: \underline \g\rightarrow \g\] is an injective Lie algebra homomorphism (see \cite[Lemma 2.7]{CSW3}). Here, $\underline{\g}$ denotes the complexified  Lie algebra of $\underline{G}$. In view of this, we will often identify $\underline{\g}$ as a subspace of $\g$ 
without further explanation.

It follows from \cite[Proposition 5.8]{CSW4} that together with the adjoint representation  \[\Ad: \underline G\curvearrowright \g \] and the  injective homomorphism  $d\iota_e: \underline\g \rightarrow \g$, the pair $( \g, \underline G)$  is a Lie pair. For the precise description of the adjoint representation, see \cite[(5.12)]{CSW4} or Corollary \ref{cor:realAd}. 

We will use the following result from \cite[Proposition 9.4]{CSW4}
throughout this paper without further reference.
\begin{prpd}\label{prop:Gtopair}
    The functor \be\label{eq:Gtopair}G\mapsto (\g,\underline{G},\Ad,d\iota_e)\ee is an equivalence between the category of formal Lie groups and the category of Lie pairs.
\end{prpd}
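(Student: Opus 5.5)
The statement is Proposition \ref{prop:Gtopair}, which the paper takes from \cite[Proposition 9.4]{CSW4}. The plan below therefore explains how the equivalence is assembled from the earlier results, rather than building it from scratch.

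\emph{Well-definedness.} First I check that \eqref{eq:Gtopair} is a functor. On objects this is \cite[Proposition 5.8]{CSW4}, recalled above: $(\g,\underline G,\Ad,d\iota_e)$ is a Lie pair. For a homomorphism $\varphi:G\to H$ of formal Lie groups, I send it to the pair $(d\varphi_e,\underline\varphi)$. Here $d\varphi_e={}^t\varphi^*|_{\g}$, and $\underline\varphi$ is the reduction \eqref{eq:reductionvarphi}, which is a Lie group homomorphism because reduction respects products and the group diagrams. Compatibility with $\iota$ follows by differentiating the commutative square \eqref{eq:underlinecommdiag} at $e$. For compatibility with $\Ad$, I write $\Ad_g\eta=\Ev_g\,\eta\,\Ev_{g^{-1}}$ in the Hopf formal coalgebra and use that ${}^t\varphi^*$ is an algebra map sending $\Ev_g$ to $\Ev_{\overline\varphi(g)}$. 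Functoriality is clear.

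\emph{Faithfulness and fullness.} I pass to Hopf formal coalgebras using Proposition \ref{prop:formalliegrouptohopfalg}. The task is to show that a Hopf formal coalgebra homomorphism $\RD_c^{-\infty}(G;\CO_G)\to\RD_c^{-\infty}(H;\CO_H)$ is determined by, and built from, its restrictions to $\RU(\g)$ (via \eqref{eq:Utodist}) and to $\CG\cong G$ (Corollary \ref{cor:Gembed}). The key structural input is a decomposition of the form
\[
\RD_c^{-\infty}(G;\CO_G)\;\cong\;\RD_c^{-\infty}(\underline G)\wt\otimes_{\Ri}\RU(\g)\Big/\big(\text{relations identifying }\RU(\underline\g)\big).
\]
This is a smash-product or Harish-Chandra-type description. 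It should follow locally from $\CO_{(\R^n)^{(k)}}=\RC^\infty[[y]]$, whose dual is $\RD_c^{-\infty}(\R^n)\otimes\C[\partial_y]$, together with left translation.

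\begin{itemize}
\item \emph{Faithfulness.} $\RD_c^{-\infty}(\underline G)$ is the closed span of the $\Ev_g$, so a homomorphism is determined by its values on $\Ev_g$ and on $\g$, hence by $(\underline\varphi,d\varphi_e)$.
\item \emph{Fullness.} Given a morphism of Lie pairs $(\psi,\sigma)$, I define the map on $\RD_c^{-\infty}(\underline G)\otimes\RU(\g)$ by ${}^t\underline\sigma^*\otimes\RU(\psi)$. The Lie pair axioms make it descend through the relations and respect multiplication: the $\Ad$-equivariance handles commuting $\Ev_g$ past $\eta$, and the $\iota$-compatibility handles $\RU(\underline\g)$. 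Continuity comes from the inductive tensor topology.
\end{itemize}

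\emph{Essential surjectivity.} Given a Lie pair $(\q,L)$, I construct the Hopf formal coalgebra
\[
C:=\RD_c^{-\infty}(L)\otimes_{\RU(\l)}\RU(\q),
\]
with multiplication $(\Ev_g\otimes u)(\Ev_h\otimes v)=\Ev_{gh}\otimes(\Ad_{h^{-1}}u)v$. To see that $C$ is a formal coalgebra, I choose a complement $\q=\l\oplus\mathfrak s$. By PBW, $C\cong\RD_c^{-\infty}(L)\otimes\RS(\mathfrak s)$ as coalgebras, which is the formal coalgebra of $L^{(k)}$-type with $k=\dim\mathfrak s$. Proposition \ref{prop:formalliegrouptohopfalg} then yields a formal Lie group $G$ whose Lie pair is $(\q,L)$.

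\emph{Main obstacle.} I expect the hardest step to be this essential surjectivity, specifically checking that $C$ is a Hopf $\wt\otimes_\Ri$-algebra with continuous structure maps and that its coalgebra really is formal. The difficulty is that the quotient by $\RU(\l)$ must be compatible with the topology of $\RD_c^{-\infty}(L)$. I would first try to handle it by realizing $C$ concretely as the dual of the formal algebra $(\RC^\infty(L)\otimes\RS(\mathfrak s)^*)^{\wedge}$, and transporting the algebra structure by duality.
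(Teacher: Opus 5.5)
The paper gives no argument of its own here; it quotes \cite[Proposition 9.4]{CSW4}, where (see Proposition \ref{prop:quo}) the quasi-inverse is built geometrically as $(L\ltimes\CG_{\q})/\CG_{\l}$, so formality and the Hopf axioms come for free. Section 3 then identifies the Hopf formal coalgebra of this group with $\RD^{-\infty}_c(L)\otimes_{\RU(\l)}\RU(\q)$, using the same PBW isomorphism \eqref{eq:D=[]} you invoke. Your essential-surjectivity step is a legitimate algebra-first alternative: it avoids the quotient theorem at the cost of checking the Hopf axioms by hand, best done on the function side, as you suggest, where $\wt\otimes_{\pi}$ is associative.

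\textbf{The gap is in fullness.} There you use, for an \emph{arbitrary} formal Lie group $G$, that the natural map
\[
\Phi_G:\ \RD^{-\infty}_c(\underline G)\otimes_{\RU(\underline\g)}\RU(\g)\longrightarrow\RD^{-\infty}_c(G;\CO_G),\qquad \tau\otimes\eta\longmapsto({}^t\iota^*\tau)\,\eta,
\]
is an isomorphism of Hopf formal coalgebras. That is, you assume $G$ is isomorphic to the group built from its own Lie pair. This is the real content of the theorem, not a side input. In the paper it comes from Proposition \ref{prop:OofLLqLl}(b) together with Proposition \ref{prop:quo}(d), and the latter is essentially the statement being proved, so it cannot be quoted. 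Saying it ``should follow locally from $\RC^\infty[[y]]$ together with left translation'' does not prove it. The chart coordinates $y_j$ are unrelated to the group law, so $\RD^{-\infty}_c((\R^n)^{(k)})=\RD^{-\infty}_c(\R^n)\otimes\C[\partial_y]$ by itself says nothing about right multiplication by $\RU(\g)$.

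What you need is that the transpose $\CO_G(G)\to\Hom_{\RU(\underline\g)}(\RU(\g),\RC^\infty(\underline G))$, $f\mapsto\big((g,\eta)\mapsto\la\Ev_g\eta,f\ra\big)$, is a topological isomorphism.
\begin{itemize}
\item \emph{Injectivity} is comparatively soft. Distributions supported at $e$ form $\RU(\g)$ by a Cartier--Kostant argument, left translation by $\Ev_g$ moves them to $g$, and a formal function killed by all point-supported distributions vanishes. This gives density of the image of $\Phi_G$, which is all faithfulness needs.
\item \emph{Fullness} needs $\Phi_G$ to be bijective with continuous inverse. Otherwise ${}^t\sigma^*\otimes\RU(\psi)$ does not even obviously descend to the image of $\Phi_G$, which requires injectivity of $\Phi_G$, let alone extend continuously to all of $\RD^{-\infty}_c(G;\CO_G)$.
\item \emph{Surjectivity} needs a genuine local argument. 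Take a complement $\mathfrak s$ of $\underline\g$ in $\g$ and show that the left-invariant operators $\Ev_g\,\mathrm{symm}(\RS(\mathfrak s))$ are transversal to $\underline G$ and control all $y$-Taylor coefficients at every point of a chart. Then construct $f$ locally by induction on $y$-degree, glue using injectivity, and finish with an open-mapping argument for the topology.
\end{itemize}
So your assessment of the main obstacle is misplaced: the Hopf-axiom checks for $C$ are comparatively routine, and this comparison is what must be supplied.

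A minor point: take $\Ad_g\eta=\Ev_g\eta\Ev_{g^{-1}}$ from the definition of $\Ad$ in \cite{CSW4}, not from Corollary \ref{cor:realAd}. The paper derives that corollary from Proposition \ref{prop:quo}, so it is downstream of the statement.
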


\section{Associated structures of  Lie pairs}
In this section, we give explicit descriptions of the Hopf formal algebra and the Hopf formal coalgebra corresponding to a Lie pair (see Proposition \ref{prop:OofLLqLl}), which will be used to prove Theorem \ref{thm:eqMR} in Section 5.

\subsection{Topological Hopf algebras associated to $L\ltimes \CG_{\q}$}
 Let $(\q,L,\Ad,\iota)$ be a Lie pair. 
 Recall that  $\RU(\q)$ is a Hopf formal coalgebra. Then its strong dual $(\mathrm{U}(\mathfrak{q}))'$ is a  Hopf formal algebra. The formal Lie group corresponding to $\RU(\q)$ is denoted by \[\CG_{\mathfrak{q}}:=(\{e\},(\mathrm{U}(\mathfrak{q}))').\] Here, we do not distinguish between a sheaf over a singleton and its space of global sections. It is clear that 
  \[\CO_{\CG_{\q}}(\{e\})=(\RU(\q))'\] as Hopf formal algebras and 
  \[\RD^{-\infty}_c(\{e\};\CO_{\CG_{\q}})=\RU(\q)\] as Hopf formal coalgebras 
  (\cf  \cite[Example 2.9]{CSW4}). 
 
 Recall from \cite[Section 6.1]{CSW4} the formal Lie group
\[L\ltimes \CG_{\q}:=(L\times \CG_{\q},m, \varepsilon),\] which is called the \textbf{semidirect product} of $L$ and $\CG_{\q}$. 
The multiplication morphism $m$ is the composition \be\label{eq:mGltimesH}\xymatrix@R-0.7pc@C+2.9pc{m:\ L\times \CG_{\q} \times L\times \CG_\q \ar[r]^{\mathrm{id}_L\times \nu \times \mathrm{id}_{\CG_{\q}}}&L\times L \times \CG_{\q} \times \CG_{\q} \ar[d]_{\mathrm{id}_L\times(\mathrm{id}_L\times i)\times \mathrm{id}_{\CG_{\q}} \times \mathrm{id}_{\CG_{\q}}}\\&L\times L\times L\times \CG_{\q} \times \CG_{\q}\ar[r]^-{\mathrm{id}_L\times \mathrm{id}_{L}\times\psi \times \mathrm{id}_{\CG_{\q}}}&L\times L\times \CG_{\q}\times \CG_{\q}\ar[d]_{m\times m}\\&&L\times \CG_{\q}, }\ee
the unit morphism $\varepsilon$ 
is  
$\Spec(\BC)=\Spec (\BC)\times \Spec(\BC)\xrightarrow{\varepsilon\times \varepsilon} L\times \CG_{\q} $, and the inversion morphism is the composition \be \label{eq:iGH} i:\ L\times \CG_{\q} \xrightarrow {(i\times\mathrm{id}_{L})\times i }L\times L \times \CG_{\q} \xrightarrow{\mathrm{id}_{L}\times \psi} L\times \CG_{\q}.\ee
Here,  $\nu$ is the switching isomorphism, and \be\label{eq:psiofLonLq} \psi=(\overline{\psi},\psi^*):\  L\curvearrowright \CG_{\q}\ee is a formal action 
as automorphisms induced by $\Ad: L\curvearrowright \q$ (see \cite[Proposition 8.1]{CSW4} or \eqref{eq:psii}).

By Proposition \ref{prop:formalliegrouptohopfalg0},  $\CO_{L\ltimes \CG_{\q}}(L\times \{e\})$ is a Hopf formal  algebra, and $\RD^{-\infty}_c(L\times \{e\};\CO_{L\ltimes \CG_{\q}})$ is a Hopf formal coalgebra. 

Let $\RD^{-\infty}_c(L):=(\RC^{\infty}(L))'$ denote the space of (complex-valued) compactly supported distributions on $L$. 
It is well known that 
\[\RD^{-\infty}_c(L)\wt\otimes_{\mathrm{i}}\RU(\q)
      = \RD^{-\infty}_c(L)\otimes_{\mathrm{i}}\RU(\q)\] as LCS 
and that 
\be\label{eq:forcom}(\RD^{-\infty}_c(L)\otimes_{\mathrm{i}}\RU(\q))\wt\otimes_\Ri (\RD^{-\infty}_c(L)\otimes_{\mathrm{i}}\RU(\q))=(\RD^{-\infty}_c(L)\wt\otimes_{\mathrm{i}}\RD^{-\infty}_c(L))\otimes_{\mathrm{i}}(\RU(\q)\otimes_\Ri\RU(\q))\ee as LCS (see \cite[Example B.3]{CSW2}).

\begin{lemd}\label{lem:OLLqpro}
\noindent (a) There are identifications
\be\label{eq:OLLq1}
\begin{aligned}
\CO_{L\times \CG_{\q}}(L\times \{e\})
&=\CO_L(L)\wt\otimes_{\pi} \CO_{\CG_{\q}}(\{e\}) \\
&=\CO_L(L)\wt\otimes_{\pi} (\RU(\q))'
=\Hom_{\BC}(\RU(\q),\RC^{\infty}(L))
\end{aligned}
\ee
between formal algebras. Here, $\Hom_{\BC}(\RU(\q),\RC^{\infty}(L))$ is a formal algebra endowed with the pointwise convergence topology and the canonical multiplication
\[
\begin{array}{rcl}
\Hom_{\BC}(\RU(\q),\RC^{\infty}(L))\times \Hom_{\BC}(\RU(\q),\RC^{\infty}(L))
&\longrightarrow& \Hom_{\BC}(\RU(\q),\RC^{\infty}(L)), \\[2mm]
(f_1,f_2)&\longmapsto& \Bigl(\eta\mapsto \sum_i f_1(\eta_i^{[1]})f_2(\eta_i^{[2]})\Bigr),
\end{array}
\]
where for each $\eta\in \RU(\q)$,
\[
\sum_i\eta_i^{[1]}\otimes\eta_i^{[2]}=\Delta(\eta)\in \RU(\q)\otimes_\Ri\RU(\q),
\]
and $\Delta$ denotes the comultiplication of the Hopf formal coalgebra $\RU(\q)$.

\noindent (b) There are identifications
\be \label{eq:DtoDotimes}
\begin{aligned}
\RD^{-\infty}_c(L\times\{e\};\CO_{L\times \CG_{\q}})
&= (\CO_{L\times \CG_{\q}}(L\times \{e\}))'
= (\CO_L(L)\wt\otimes_{\pi} \RU(\q)')' \\
&= \RD^{-\infty}_c(L)\wt\otimes_{\mathrm{i}}\RU(\q)
= \RD^{-\infty}_c(L)\otimes_{\mathrm{i}}\RU(\q)
\end{aligned}
\ee
between formal coalgebras. Here, $\RD^{-\infty}_c(L)\otimes_{\mathrm{i}}\RU(\q)$ is a formal coalgebra with the canonical comultiplication
\[
\begin{array}{rcl}
\RD^{-\infty}_c(L)\otimes_{\mathrm{i}}\RU(\q)
&\longrightarrow&
(\RD^{-\infty}_c(L)\wt\otimes_{\mathrm{i}}\RD^{-\infty}_c(L))\otimes_{\mathrm{i}}(\RU(\q)\otimes_\Ri\RU(\q))
\quad\text{(see \eqref{eq:forcom})}, \\[2mm]
\tau\otimes\eta&\longmapsto& \Delta(\tau)\otimes\Delta(\eta),
\end{array}
\]
where $\Delta$ denotes the comultiplications of the Hopf formal coalgebras $\RD^{-\infty}_c(L)$ and $\RU(\q)$.
\end{lemd}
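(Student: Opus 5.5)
The plan is to prove (a) by chaining the product formula with a standard tensor-product identification, and then to obtain (b) from (a) by taking strong duals.

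For (a), the first equality is the instance $M_1=L$, $M_2=\CG_{\q}$ of \eqref{eq:O_3}. This uses that the underlying formal manifold of $L\ltimes\CG_{\q}$ is the product $L\times\CG_{\q}$, and that the algebra structure of $\CO_{L\times\CG_{\q}}(L\times\{e\})$ is the pointwise product of formal functions, which under \eqref{eq:O_3} matches the tensor-product algebra structure. The second equality is just $\CO_{\CG_{\q}}(\{e\})=(\RU(\q))'$.

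For the third equality, write $\RU(\q)=\bigoplus_j \C v_j$ for a countable basis, carrying the finest locally convex topology. Then $(\RU(\q))'=\prod_j\C$, a Fr\'echet space with its product topology. Since $\RC^\infty(L)$ is complete, I use the standard fact that the (quasi-)completed projective tensor product with a countable product of copies of $\C$ is the product:
\[
\RC^\infty(L)\wt\otimes_\pi \prod_j\C=\prod_j \RC^\infty(L).
\]
This holds because $\otimes_\pi$ commutes with products when one factor is a product of $\C$'s, with dense image. I also need that quasi-completion equals completion here, which follows from \eqref{eq:O_3}. The product $\prod_j \RC^\infty(L)$ is exactly $\Hom_\C(\RU(\q),\RC^\infty(L))$ with the topology of pointwise convergence.

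It remains to check that the multiplication corresponds. On elementary tensors $\phi_1\otimes\lambda_1$ and $\phi_2\otimes\lambda_2$, the product is $\phi_1\phi_2\otimes\lambda_1\lambda_2$. The product $\lambda_1\lambda_2$ in $(\RU(\q))'$ is the transpose of $\Delta$, i.e. $\eta\mapsto\sum_i\lambda_1(\eta_i^{[1]})\lambda_2(\eta_i^{[2]})$. This agrees with the stated convolution-type formula. Both sides are separately continuous in each variable and elementary tensors span a dense subspace, so the formulas agree everywhere.

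For (b), the first equality is \eqref{eq:OM'} and the second follows from (a). For the third, I apply \eqref{eq:D3} to $M_1=L$, $M_2=\CG_{\q}$, using $\RD_c^{-\infty}(\{e\};\CO_{\CG_\q})=\RU(\q)$ and $\RD_c^{-\infty}(L;\CO_L)=\RD_c^{-\infty}(L)$. The last equality is the well-known identification $\RD^{-\infty}_c(L)\wt\otimes_{\mathrm{i}}\RU(\q)=\RD^{-\infty}_c(L)\otimes_{\mathrm{i}}\RU(\q)$ quoted just before the lemma. Concretely, this is $\bigoplus_j\RD_c^{-\infty}(L)$, an LF-type direct sum which is already complete, and it is the strong dual of $\prod_j\RC^\infty(L)$.

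The comultiplication is by definition the transpose of the multiplication in (a). Pairing $\tau\otimes\eta$ with $f_1\otimes f_2$, the transpose of the pointwise product on $\RC^\infty(L)$ gives $\Delta(\tau)$, and the transpose of the product on $(\RU(\q))'$ gives $\Delta(\eta)$. Here I use \eqref{eq:forcom} to place $\Delta(\tau)\otimes\Delta(\eta)$ in the correct space. Since all identifications are compatible with \eqref{eq:DMM=DMDM}, this yields the stated comultiplication.

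The main obstacle is the topological bookkeeping: matching $\RC^\infty(L)\wt\otimes_\pi(\RU(\q))'$ with the product $\prod_j\RC^\infty(L)$ with its pointwise topology, and checking that the duality identifications \eqref{eq:OM'} and \eqref{eq:DMM=DMDM} commute with those from \eqref{eq:O_3} and \eqref{eq:D3}. I would rely on \cite[Example B.3]{CSW2} and the cited results of \cite{CSW1,CSW2} for these points rather than reprove them.
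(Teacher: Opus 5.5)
Your proposal is correct and follows essentially the same route as the paper. For (a), you use \eqref{eq:O_3} together with the identification $\RC^{\infty}(L)\wt\otimes_{\pi}(\RU(\q))'=\Hom_{\BC}(\RU(\q),\RC^{\infty}(L))$, which the paper takes from \cite[Corollary B.9]{CSW2} and which you derive directly from $(\RU(\q))'=\prod_j\BC$. For (b), you dualize using the product formula for formal distributions and \cite[Example B.3]{CSW2}, as the paper does. Your explicit check of the multiplication and comultiplication on elementary tensors, followed by a density argument, spells out details that the paper leaves implicit.
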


\begin{proof}
Assertion (a) follows from \eqref{eq:O_3} and \cite[Corollary B.9]{CSW2}; assertion (b) follows from (a) together with \cite[Theorem 1.3, Lemma A.12 and Example B.3]{CSW2}.
\end{proof}

Note that $\Hom_{\BC}(\RU(\q),\RC^{\infty}(L))$ coincides with the space
\be\label{eq:Hom()}
\left\{
f: L\times \RU(\q) \rightarrow \BC \;\middle|\;
\begin{aligned}
&\text{$f$ is smooth in the first variable,} \\
&\text{and linear in the second variable}
\end{aligned}
\right\}.
\ee
In what follows, we shall often identify $\Hom_{\BC}(\RU(\q),\RC^{\infty}(L))$ with the space in \eqref{eq:Hom()} without further comment.

The natural pairing
\[
\Hom_{\BC}(\RU(\q),\RC^{\infty}(L))\times (\RD^{-\infty}_c(L)\otimes_{\Ri}\RU(\q)) \rightarrow \BC
\]
induced by the duality between $\CO_{L\times\CG_{\q}}(L\times\{e\})$ and $\RD^{-\infty}_c(L\times \{e\};\CO_{L\times \CG_{\q}})$ is then given by
\be \label{eq:pairing1}
(f,\tau\otimes\eta)\mapsto \bigl\langle \tau,\; f(\eta) \bigr\rangle.
\ee

Using \eqref{eq:OLLq1}, it follows from \cite[the proof of Proposition 8.1]{CSW4} that  the homomorphism
\be\label{eq:psii}
\psi^*: \ (\RU(\q))'=\CO_{\CG_\q}(\{e\})
\longrightarrow
\CO_{L\times \CG_{\q}}(L\times\{e\})
=\Hom_{\BC}(\RU(\q),\RC^{\infty}(L))
\ee
is given by
\[
f \longmapsto \bigl((g,\eta)\mapsto f(\Ad_g(\eta))\bigr).
\]
Here
\be\label{eq:AdLonUq}
L\curvearrowright \RU(\q),\qquad (g,\eta)\mapsto \Ad_g(\eta)
\ee
is the representation induced by $\Ad: L\curvearrowright \q$.



As in \eqref{eq:OLLq1}, we  have 
that 
\[ \begin{aligned} \CO_{L\times \CG_{\q}}(L\times\{e\})\wt\otimes_{\pi} \CO_{L\times \CG_{\q}}(L\times \{e\})&=\Hom_{\BC}(\RU(\q),\RC^{\infty}(L))\wt\otimes_{\pi}\Hom_{\BC}(\RU(\q),\RC^{\infty}(L))\\ &=\Hom_{\BC}(\RU(\q)\otimes \RU(\q),\RC^{\infty}(L\times L))\end{aligned} \] as formal algebras. Similar to before, we will also view an element in 
$ \Hom_{\BC}(\RU(\q)\otimes \RU(\q),\RC^{\infty}(L\times L))$ as  a function on $(L\times L)\times (\RU(\q)\otimes \RU(\q))$.  

\begin{prpd}\label{prop:OpfLLq}

\noindent (a)
The formal algebra $\Hom_{\BC}(\RU(\q),\RC^{\infty}(L))$ is a Hopf formal algebra with 
the comultiplication \begin{eqnarray}\label{eq:mulofDLLq}
    \Hom_{\BC}(\RU(\q),\RC^{\infty}(L))&\rightarrow &\Hom_{\BC}(\RU(\q)\otimes \RU(\q),\RC^{\infty}(L\times L)), \nonumber\\ f&\mapsto& \big((g_1,g_2,\eta_1\otimes\eta_2)\mapsto f(g_1g_2, \Ad_{g_2^{-1}}(\eta_1)\eta_2)\big),
\end{eqnarray}
the counit 
\be\label{eq:couniofHom} \Hom_{\BC}(\RU(\q),\RC^{\infty}(L))\rightarrow \BC,\quad f\mapsto f(e,\Ev_e),\ee
and the antipode  
\begin{eqnarray}\label{eq:antiofOLLq}
    \Hom_{\BC}(\RU(\q),\RC^{\infty}(L))&\rightarrow &\Hom_{\BC}(\RU(\q),\RC^{\infty}(L)), \nonumber\\ f&\mapsto& \big((g,\eta)\mapsto f(g^{-1}, \Ad_{g}(\check{\eta}))\big),
\end{eqnarray} 
where 
$\Ev_e$ is the unit element  of the Hopf algebra $\RU(\q)$, 
and $\check{\eta}$ is the image of $\eta$ under the antipode of $\RU(\q)$.
Furthermore, the identification \be\label{eq:OLLqtoHom} \CO_{L\ltimes \CG_{\q}}(L\times \{e\})=\Hom_{\BC}(\RU(\q),\RC^{\infty}(L))\quad \text{(see    \eqref{eq:OLLq1})}\ee  is an identification of Hopf formal algebras. 

\noindent (b) The formal coalgebra $\RD^{-\infty}_c(L)\otimes_{\mathrm{i}}\RU(\q)$ is a Hopf formal coalgebra: the multiplication is the transpose of \eqref{eq:mulofDLLq}, the unit is the transpose of \eqref{eq:couniofHom},  and the antipode is the transpose of \eqref{eq:antiofOLLq}.
Furthermore, the identification 
\be\label{eq:indenDtoDotimes} \RD^{-\infty}_c(L\times \{e\};\CO_{L\ltimes \CG_{\q}})=\RD^{-\infty}_c(L)\otimes_{\mathrm{i}}\RU(\q) \quad \text{(see \eqref{eq:DtoDotimes})}\ee  is an identification of Hopf formal coalgebras. 
\end{prpd}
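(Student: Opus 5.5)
The plan is to transport the Hopf structure of $\CO_{L\ltimes \CG_{\q}}(L\times\{e\})$, which is given by Proposition \ref{prop:formalliegrouptohopfalg0}, through the identification \eqref{eq:OLLq1}. This automatically makes $\Hom_{\BC}(\RU(\q),\RC^{\infty}(L))$ a Hopf formal algebra. It then remains to check that the transported comultiplication, counit and antipode are exactly \eqref{eq:mulofDLLq}, \eqref{eq:couniofHom} and \eqref{eq:antiofOLLq}. Part (b) will follow by transposing, using Lemma \ref{lem:OLLqpro}(b) together with the duality of Proposition \ref{prop:formalliegrouptohopfalg} (the functor $A\mapsto A'$).

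To compute $m^*$, I would use the factorization \eqref{eq:mGltimesH}, so that $m^*$ is the composition of the pullbacks of the individual morphisms. Each of these is known explicitly under \eqref{eq:OLLq1}:
\begin{itemize}
\item the switching $\nu$ and the identities act by relabeling variables;
\item $(\mathrm{id}_L\times i)^*$ on the $L$-factor replaces $g$ by $g^{-1}$;
\item $\psi^*$ is given by \eqref{eq:psii};
\item $m_L^*$ is the ordinary pullback along multiplication in $L$;
\item $m_{\CG_\q}^*$ is the transpose of the multiplication of $\RU(\q)$, i.e. $f\mapsto(\eta_1\otimes\eta_2\mapsto f(\eta_1\eta_2))$.
\end{itemize}

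Each pullback is a continuous algebra homomorphism, and functions of the form $f=\phi\otimes\lambda$ with $\phi\in\RC^\infty(L)$ and $\lambda\in\RU(\q)'$ span a dense subspace of $\CO_L(L)\wt\otimes_\pi\RU(\q)'$. So it suffices to check the formulas on such decomposable tensors, provided the right-hand sides of \eqref{eq:mulofDLLq} and \eqref{eq:antiofOLLq} are continuous for the pointwise convergence topology. Continuity holds because evaluation at fixed $\eta_1\otimes\eta_2$ involves only finitely many elements of $\RU(\q)$, and $\Ad_{g}$ acts smoothly on finite-dimensional $L$-stable filtration pieces of $\RU(\q)$.

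On a decomposable tensor, chasing $\phi\otimes\lambda$ through the composition gives $\phi(g_1g_2)\,\lambda(\Ad_{g_2^{-1}}(\eta_1)\eta_2)$; the $\Ad_{g_2^{-1}}$ arises from $\psi^*$ evaluated at the inverted $L$-coordinate $g_2^{-1}$. Similarly, \eqref{eq:iGH} gives the antipode: $(i\times \mathrm{id}_L)\times i$ contributes $g^{-1}$ and $\check\eta$, and $\psi^*$ contributes $\Ad_g$. The counit is pullback along $\varepsilon\times\varepsilon$, which is evaluation at $e\in L$ and at the unit $1=\Ev_e$ of $\RU(\q)$.

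The main obstacle is the bookkeeping in \eqref{eq:mGltimesH}: the variables are permuted and one copy of $L$ is inverted before $\psi$ acts. I would carefully track which $L$-coordinate feeds into $\psi$ and confirm that the result is $\Ad_{g_2^{-1}}$ applied to $\eta_1$, not to $\eta_2$. I would also rely on $\Ad_g$ being an algebra automorphism of $\RU(\q)$ so that the formula is consistent. Once (a) is established, (b) follows directly: the identification \eqref{eq:DtoDotimes} is the strong dual of \eqref{eq:OLLq1}, and the Hopf coalgebra structure maps on $\RD^{-\infty}_c(L\times\{e\};\CO_{L\ltimes\CG_\q})$ are by definition the transposes of $m^*$, $\varepsilon^*$ and $i^*$.
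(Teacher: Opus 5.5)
Your proposal is correct and follows the same route as the paper. The paper likewise transports the Hopf structure of $\CO_{L\ltimes \CG_{\q}}(L\times\{e\})$ through \eqref{eq:OLLq1}, checks the formulas using \eqref{eq:psii} and the explicit descriptions of $m$, $\varepsilon$ and $i$, and obtains (b) by transposition; your density-plus-continuity argument on decomposable tensors simply spells out the verification that the paper calls easy.
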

\begin{proof} 
The assertion (b) follows immediately from the assertion (a). 
    The  Hopf formal algebra structure on $\Hom_{\BC}(\RU(\q),\RC^{\infty}(L))$ stated in the assertion (a) is induced via \eqref{eq:OLLq1} by that  on $\CO_{L\ltimes \CG_{\q}}(L\times \{e\})$, which is easy to verify by using \eqref{eq:psii},  
     the multiplication morphism $m$, the unit morphism $\varepsilon$, and the inversion morphism $i$ of $L\ltimes \CG_{\q}$. 
\end{proof}

 In what follows, for each $\tau\in  \RD^{-\infty}_c(L)$ and $\eta\in \RU(\q)$, let $\tau.\eta$ denote the image of $\tau\otimes\eta$ under the 
homomorphism 
\[  \RD^{-\infty}_c(L)\otimes_{\mathrm{i}}\RU(\q)=\RD^{-\infty}_c(L\times\{e\};\CO_{L\ltimes \CG_{\q}})\xrightarrow{{}^t\psi^*}\RD^{-\infty}_c(\{e\};\CO_{\CG_{\q}})=\RU(\q).\] 

The following result is easy to verify.

\begin{prpd}\label{prop:DofLLq}
The unit of the Hopf formal coalgebra  $\RD^{-\infty}_c(L)\otimes_{\mathrm{i}}\RU(\q)$ is the homomorphism 
\[\BC\rightarrow \RD^{-\infty}_c(L)\otimes_{\mathrm{i}}\RU(\q), \quad c\mapsto c\Ev_e\otimes \Ev_e\quad\text{(the second $\Ev_e$ is the unit element of $\RU(\q)$)}.\] For each $\tau_1\in \RD^{-\infty}_c(L)$, $\eta_1,\eta_2,\eta\in \RU(\q)$, and $\tau\in \RD^{-\infty}_c(L)$ such that the image \[{}^t(\mathrm{id}_L\times i)^*(\tau)\in \RD^{-\infty}_c(L\times L)=\RD^{-\infty}_c(L)\wt\otimes_{\Ri}\RD^{-\infty}_c(L)\] has the form 
$\sum_{i}\tau_i^{[1]}\otimes\tau_i^{[2]}$, the image of $(\tau_1\otimes\eta_1)\otimes(\tau\otimes\eta_2)$ under the multiplication 
\[(\RD^{-\infty}_c(L)\otimes_{\mathrm{i}}\RU(\q))\wt\otimes_{\mathrm{i}}(\RD^{-\infty}_c(L)\otimes_{\mathrm{i}}\RU(\q))\rightarrow \RD^{-\infty}_c(L)\otimes_{\mathrm{i}}\RU(\q)\]
is
\be\label{eq:mulofDDLLq}\sum_i\tau_1\tau^{[1]}_{i}\otimes(\tau_{i}^{[2]}.\eta_1)\eta_2,
\ee
and the image of $\tau\otimes\eta$ under the antipode 
\[\RD^{-\infty}_c(L)\otimes_{\mathrm{i}}\RU(\q)\rightarrow \RD^{-\infty}_c(L)\otimes_{\mathrm{i}}\RU(\q)\]
is 
\be\label{eq:antiofDLLq} \sum_i \tau^{[2]}_i\otimes\tau_i^{[1]}.\check{\eta}. \ee
\end{prpd}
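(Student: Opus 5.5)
The plan is to compute everything by duality, using the pairing \eqref{eq:pairing1} and the Hopf formal algebra structure of Proposition \ref{prop:OpfLLq}(a). By Proposition \ref{prop:OpfLLq}(b), the unit, multiplication and antipode of $\RD^{-\infty}_c(L)\otimes_{\mathrm{i}}\RU(\q)$ are the transposes of \eqref{eq:couniofHom}, \eqref{eq:mulofDLLq} and \eqref{eq:antiofOLLq}. Since the pairing between $\Hom_{\BC}(\RU(\q),\RC^{\infty}(L))$ and $\RD^{-\infty}_c(L)\otimes_{\mathrm{i}}\RU(\q)$ is nondegenerate (it identifies one space with the dual of the other, by \eqref{eq:DtoDotimes}), it suffices to pair each claimed element with an arbitrary $f$. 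We check that the result agrees with pairing the input against the image of $f$ under the corresponding structure map.

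\textbf{Unit.} We have $\langle c\Ev_e\otimes\Ev_e, f\rangle = c\,\langle \Ev_e, f(\Ev_e)\rangle = c\, f(e,\Ev_e)$. This is $c$ times the counit \eqref{eq:couniofHom} applied to $f$, so the unit claim follows immediately.

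\textbf{Multiplication.} Pair $(\tau_1\otimes\eta_1)\otimes(\tau\otimes\eta_2)$ with the image of $f$ under \eqref{eq:mulofDLLq}. This gives
\[
\bigl\langle \tau_1\otimes\tau,\ (g_1,g_2)\mapsto f(g_1g_2,\Ad_{g_2^{-1}}(\eta_1)\eta_2)\bigr\rangle .
\]
First reduce to finite sums: expand $\Delta$-type expressions in the finite-dimensional piece of $\RU(\q)$ spanned by the relevant elements. Here $\Ad_{g}$ preserves the filtration $\RU_n(\q)$, so $g\mapsto \Ad_{g^{-1}}(\eta_1)$ is a smooth map into a finite-dimensional space.

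Next rewrite the function $(g_1,g_2)\mapsto f(g_1g_2,\Ad_{g_2^{-1}}\eta_1\,\eta_2)$ as the pullback under $(g_1,g_2)\mapsto (g_1, g_2, g_2^{-1})$ of the function
\[
(g_1,h,k)\mapsto f(g_1h,\Ad_k\eta_1\,\eta_2),
\]
followed by multiplication $g_1h$. Since $L\times L\to L\times L$, $g\mapsto (g,g^{-1})$ is the map $\mathrm{id}_L\times i$ composed with the diagonal, transposing turns $\tau$ into $\sum_i\tau_i^{[1]}\otimes\tau_i^{[2]}$. The $\tau_i^{[2]}$-component acts only through $k\mapsto \Ad_k\eta_1$. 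By the formula \eqref{eq:psii} for $\psi^*$, pairing $\tau_i^{[2]}$ against $k\mapsto \phi(\Ad_k\eta_1)$ for $\phi\in\RU(\q)'$ is exactly $\phi(\tau_i^{[2]}.\eta_1)$. Meanwhile the $\tau_1$ and $\tau_i^{[1]}$ components combine via ${}^tm^*$ into $\tau_1\tau_i^{[1]}$. Together these give \eqref{eq:mulofDDLLq}.

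\textbf{Antipode.} This is analogous. Pair $\tau\otimes\eta$ with $(g,\eta)\mapsto f(g^{-1},\Ad_g\check\eta)$, and factor $g\mapsto(g^{-1},g)$ as the diagonal followed by $\mathrm{id}_L\times i$ and a swap. This yields $\sum_i\langle\tau_i^{[2]}\otimes(\tau_i^{[1]}.\check\eta),f\rangle$, which is \eqref{eq:antiofDLLq}.

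\textbf{Main obstacle.} The delicate step is justifying these manipulations of the completed inductive tensor products. Specifically, one needs that pairing $\tau_1\otimes\tau_i^{[1]}\otimes\tau_i^{[2]}$ against a function of three variables can be done iteratively (a Fubini-type statement), and that ${}^t(\mathrm{id}_L\times i)^*(\tau)$ converges so as to commute with the pairing. I would handle this by continuity and density. Verify the identities first for $\tau,\tau_1$ finite linear combinations of Dirac distributions $\Ev_g$, where ${}^t(\mathrm{id}_L\times i)^*(\Ev_g)=\Ev_g\otimes\Ev_{g^{-1}}$ and $\Ev_g.\eta=\Ad_g\eta$, so the formulas reduce to \eqref{eq:mulofDLLq} and \eqref{eq:antiofOLLq} literally. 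Then extend using the density of their span in $\RD^{-\infty}_c(L)$ and the continuity of both sides, which comes from the joint continuity established in Proposition \ref{prop:OpfLLq}.
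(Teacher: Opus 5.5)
Your proposal is correct and is essentially the verification the paper has in mind. The paper only says the result is easy to verify, and the intended check is exactly to transpose the structure maps of Proposition \ref{prop:OpfLLq}(a) through the pairing \eqref{eq:pairing1}, which your Dirac-distribution computation together with the density argument carries out. One small correction: Proposition \ref{prop:OpfLLq} gives continuity of the multiplication on the $\wt\otimes_{\Ri}$ tensor product, which means separate (not joint) continuity, so run the density argument one variable at a time; and the continuity of the right-hand side in $\tau$ comes from that of ${}^t(\mathrm{id}_L\times i)^*$ and ${}^t\psi^*$, not from Proposition \ref{prop:OpfLLq}.
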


By \eqref{eq:mulofDDLLq}, the following corollary is straightforward.
\begin{cord}
For each $g\in L$ and each \[\eta_0\in \RU(\q)=\{\Ev_e\otimes\eta:\eta\in \RU(\q)\}\subset \RD^{-\infty}_c(L)\otimes_{\mathrm{i}}\RU(\q)= \RD^{-\infty}_c(L\times \{e\};\CO_{L\ltimes \CG_{\q}}) ,\] we have that \be\label{eq:realAd} \Ev_{(g,e)}\eta_0 \Ev_{(g^{-1},e)}=\Ad_{g}(\eta_0)\in \RU(\q) \ee in the Hopf formal coalgebra $\RD^{-\infty}_c(L\times \{e\};\CO_{L\ltimes \CG_{\q}})$.
\end{cord}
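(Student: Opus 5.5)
The plan is to apply the multiplication formula \eqref{eq:mulofDDLLq} twice, first to compute $\Ev_{(g,e)}\eta_0$ and then to multiply the result on the right by $\Ev_{(g^{-1},e)}$. Throughout, identify $\Ev_{(g,e)}$ with $\Ev_g\otimes\Ev_e\in \RD^{-\infty}_c(L)\otimes_{\mathrm{i}}\RU(\q)$, and $\eta_0$ with $\Ev_e\otimes\eta_0$; these identifications come from \eqref{eq:DtoDotimes} and the pairing \eqref{eq:pairing1}. Two auxiliary facts will be needed. The first is that for $\tau=\Ev_h$ with $h\in L$ one has ${}^t(\mathrm{id}_L\times i)^*(\Ev_h)=\Ev_h\otimes\Ev_{h^{-1}}$, since $(\mathrm{id}_L\times i)^*f$ evaluated at $h$ is $f(h,h^{-1})$. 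The second is that $\Ev_h.\eta=\Ad_h(\eta)$ for $\eta\in\RU(\q)$, obtained by pairing with $f\in(\RU(\q))'$ and using the explicit formula \eqref{eq:psii} for $\psi^*$: $\langle \Ev_h\otimes\eta,\psi^*f\rangle=f(\Ad_h\eta)$.

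The first multiplication, $(\Ev_g\otimes\Ev_e)(\Ev_e\otimes\eta_0)$, is handled by taking $\tau_1=\Ev_g$, $\eta_1=\Ev_e$ and $\tau=\Ev_e$ in \eqref{eq:mulofDDLLq}. This gives $\Ev_g\Ev_e\otimes(\Ev_e.\Ev_e)\eta_0=\Ev_g\otimes\eta_0$, using that $\Ad_e$ is the identity and that $\Ev_e$ is the unit of $\RU(\q)$.

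The second multiplication is $(\Ev_g\otimes\eta_0)(\Ev_{g^{-1}}\otimes\Ev_e)$. Here take $\tau=\Ev_{g^{-1}}$, whose image is $\Ev_{g^{-1}}\otimes\Ev_{g}$, together with $\eta_1=\eta_0$ and $\eta_2=\Ev_e$. Formula \eqref{eq:mulofDDLLq} then yields $\Ev_g\Ev_{g^{-1}}\otimes(\Ev_g.\eta_0)\Ev_e=\Ev_e\otimes\Ad_g(\eta_0)$. This uses $\Ev_g\Ev_{g^{-1}}=\Ev_e$ in $\RD^{-\infty}_c(L)$, which holds because the transpose of the pullback of multiplication sends $\Ev_{g}\otimes\Ev_{g^{-1}}$ to $\Ev_{gg^{-1}}$. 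Associativity of the multiplication lets us bracket the triple product in this way, and the result is exactly \eqref{eq:realAd}.

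The only delicate point is checking the two auxiliary evaluations, namely the behaviour of Dirac distributions under ${}^t(\mathrm{id}_L\times i)^*$ and under ${}^t\psi^*$. Both reduce to pairing against test functions via \eqref{eq:pairing1}, so they are routine.
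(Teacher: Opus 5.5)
Your proposal is correct and follows the paper's route: the paper only says that the corollary is straightforward from \eqref{eq:mulofDDLLq}, and you carry out exactly that computation. You apply the formula twice to Dirac distributions, using ${}^t(\mathrm{id}_L\times i)^*(\Ev_h)=\Ev_h\otimes\Ev_{h^{-1}}$, the identity $\Ev_h.\eta=\Ad_h(\eta)$ read off from \eqref{eq:psii}, and $\Ev_g\Ev_{g^{-1}}=\Ev_e$ in $\RD^{-\infty}_c(L)$.
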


\subsection{Topological Hopf algebras associated to $(L\ltimes \CG_{\q})/\CG_{\l}$}
Let $(\q,L,\Ad,\iota)$ be a Lie pair. 
Let $\l$ denote the complexified Lie algebra of $L$.
Recall from \cite[Proposition 8.3]{CSW4} that the composition 
\be\label{eq:idtimesi}  \vartheta: \ \CG_{\l}\xrightarrow{\mathrm{id}_{\CG_{\l}}\times i}\CG_{\l} \times \CG_{\l}\xrightarrow{\iota\times \iota} L\ltimes \CG_{\q} \ee is  a homomorphism between formal Lie groups, where
$\iota: \CG_{\l}\rightarrow L$ is the homomorphism of formal Lie groups determined by the inclusion $\RU(\l)\rightarrow \RD^{-\infty}_c(L)$ and $\iota: \CG_{\l}\rightarrow \CG_{\q}$ is the homomorphism of formal Lie groups determined by the homomorphism $ \RU(\l)\rightarrow \RU(\q)$ (see Proposition \ref{prop:equiMO}), which is induced by $\iota: \l\rightarrow \q$. 
The composition 
\be\label{eq:Llonsemi} (L\ltimes \CG_\q)\times \CG_{\l} \xrightarrow{\mathrm{id}_{L\ltimes \CG_\q}\times \vartheta} (L\ltimes \CG_\q) \times (L\ltimes \CG_\q) \xrightarrow {m} L\ltimes \CG_\q\ee is a right formal action of $\CG_{\l}$ on $L\ltimes \CG_\q$ (see \cite[Proposition 8.3]{CSW4}). 

Recall from \cite[Section 7.1]{CSW4} the definition of the quotient
with respect to a formal action. The following result follows from \cite[Theorem 7.8, Propositions 8.3 and 9.4]{CSW4}.

\begin{prpd}\label{prop:quo}
\noindent (a) The pair
\[
  \bigg(
    (L\ltimes \CG_{\q})/\CG_{\l}
    :=
    \bigl(
      L,\,
      \CO_{(L\ltimes \CG_{\q})/\CG_{\l}}
    \bigr),\,
    \pi = (\overline{\pi},\pi^*)
  \bigg)
\]
is the quotient of $L\ltimes \CG_\q$ by $\CG_{\l}$ with respect to the right formal action \eqref{eq:Llonsemi}.
Here, the structure sheaf $\CO_{(L\ltimes \CG_{\q})/\CG_{\l}}$ is given by
\[
  V \mapsto
  \CO_{(L\ltimes \CG_{\q})/\CG_{\l}}(V)
  :=
  \CO_{L\ltimes \CG_{\q}}(V\times\{e\})^{\CG_{\l}}
  \quad\text{($V$ an open subset of $L$)},
\]
the continuous map
\[
  L = L\times\{e\} \xrightarrow{\overline{\pi}} L
\]
coincides with $\mathrm{id}_{L}$, and the homomorphism $\pi^*$ is given by the inclusion
\[
  \CO_{(L\ltimes \CG_{\q})/\CG_{\l}}(L)
  =
  \CO_{L\ltimes \CG_{\q}}(L\times \{e\})^{\CG_{\l}}
  \rightarrow
  \CO_{L\ltimes \CG_{\q}}(L\times\{e\}),
\]
where $\CO_{L\ltimes \CG_{\q}}(V\times\{e\})^{\CG_{\l}}$ is the subspace of $\CO_{L\ltimes \CG_{\q}}(V\times\{e\})$ consisting of all $\CG_{\l}$-invariant formal functions (see \cite[Definition 7.2]{CSW4}).

\noindent (b) The homomorphism
\[
  \pi^*:
  \CO_{(L\ltimes \CG_{\q})/\CG_{\l}}(L)
  \rightarrow
  \CO_{L\ltimes \CG_{\q}}(L\times\{e\})
\]
is a closed embedding. In particular, equipped with the subspace topology of $\CO_{L\ltimes \CG_{\q}}(L\times\{e\})$, the LCS $\CO_{L\ltimes \CG_{\q}}(L\times\{e\})^{\CG_{\l}}$ is isomorphic to the LCS $\CO_{(L\ltimes \CG_{\q})/\CG_{\l}}(L)$.

\noindent (c) There is a unique morphism
\[
  m:
  \bigl((L\ltimes \CG_{\q})/\CG_{\l}\bigr)
  \times
  \bigl((L\ltimes \CG_{\q})/\CG_{\l}\bigr)
  \rightarrow
  (L\ltimes \CG_{\q})/\CG_{\l}
\]
such that the diagram
\be \label{eq:diaofm0}
\begin{CD}
    (L\ltimes \CG_{\q})\times (L\ltimes \CG_{\q}) @>m>> L\ltimes \CG_{\q} \\
    @V\pi \times \pi VV @VV\pi V\\
    ((L\ltimes \CG_{\q})/\CG_{\l})\times ((L\ltimes \CG_{\q})/\CG_{\l}) @>m>> (L\ltimes \CG_{\q})/\CG_{\l}
\end{CD}
\ee
commutes. Furthermore, together with the multiplication morphism $m$ and the unit morphism
\[
  \varepsilon:
  \Spec(\BC) \xrightarrow{\varepsilon} L\ltimes \CG_{\q} \xrightarrow{\pi} (L\ltimes \CG_{\q})/\CG_{\l},
\]
the formal manifold $(L\ltimes \CG_{\q})/\CG_{\l}$ becomes a formal Lie group. In addition, the quotient morphism $\pi: L\ltimes \CG_{\q}\to (L\ltimes \CG_{\q})/\CG_{\l}$ is a homomorphism of formal Lie groups.

\noindent (d) The functors \eqref{eq:Gtopair} and
\be \label{eq:funcpairtogroup}
  (\q,L) \mapsto (L\ltimes \mathcal{G}_{\q})/\mathcal{G}_{\l}
\ee
are quasi-inverse of each other.
\end{prpd}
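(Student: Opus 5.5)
Proposition~\ref{prop:quo} assembles several results of \cite{CSW4}, so the plan is to reduce each assertion to a cited statement and check that the hypotheses hold for the right formal action \eqref{eq:Llonsemi}.

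For (a), I would invoke \cite[Proposition 8.3]{CSW4}. It guarantees that $\vartheta$ in \eqref{eq:idtimesi} is a homomorphism of formal Lie groups, so \eqref{eq:Llonsemi} is a right formal action. Then \cite[Theorem 7.8]{CSW4} gives existence of the quotient of $L\ltimes\CG_\q$ by $\CG_\l$. Since $\CG_\l$ has a single point as underlying space, the underlying topological quotient of $L\times\{e\}$ is just $L$, and $\overline{\pi}=\mathrm{id}_L$. The structure sheaf is, by construction in \cite[Section 7.1]{CSW4}, the sheaf of $\CG_\l$-invariant sections, $V\mapsto \CO_{L\ltimes\CG_\q}(V\times\{e\})^{\CG_\l}$. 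The map $\pi^*$ is then the inclusion. The one thing to check is that the quotient is a genuine formal manifold. I expect \cite[Theorem 7.8]{CSW4} to give this under a freeness/properness hypothesis, which holds here because the action of $\CG_\l$ is ``infinitesimally free'': $\iota:\l\to\q$ is injective.

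For (b), I would show that the invariant subspace is closed. Invariance is the condition $a^*f=p^*f$, where $a$ is the action morphism and $p$ the projection. Both $a^*$ and $p^*$ are continuous by \cite[Theorem 1.8]{CSW1}, so the invariants are closed. It remains to compare topologies, namely that the smooth topology on $\CO_{(L\ltimes\CG_\q)/\CG_\l}(L)$ agrees with the subspace topology. Both are Fréchet-type topologies given by local seminorms, so the open mapping theorem (or the explicit local model from \cite[Theorem 7.8]{CSW4}) should give this. I expect this topological identification to be the main obstacle. If needed, I would work locally, where the quotient is modeled on $(\R^n)^{(k')}$ and invariants are described by explicit power series.

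For (c), I would use the universal property of the quotient. The composite $\pi\circ m$ is invariant under the product action of $\CG_\l\times\CG_\l$ on $(L\ltimes\CG_\q)^2$, since $\vartheta$ is a homomorphism and $m$ is associative. The product of quotients is the quotient by the product action, by compatibility of quotients with products in \cite{CSW4}. Hence $\pi\circ m$ factors uniquely through $\pi\times\pi$. The group axioms and the existence of an inverse descend because $\pi$ and $\pi\times\pi\times\pi$ are epimorphisms, which follows from the injectivity of $\pi^*$ on sections. This also makes $\pi$ a homomorphism. Part (d) is \cite[Proposition 9.4]{CSW4}, together with the identification of the Lie pair of the quotient with $(\q,L)$ via Proposition~\ref{prop:Gtopair}.
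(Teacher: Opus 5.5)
The paper proves the entire proposition simply by citing \cite[Theorem 7.8, Propositions 8.3 and 9.4]{CSW4}, and your reductions of (a) and (d) to these results agree with it. The gap is in the argument you supply yourself for (c).

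Invariance of $\pi\circ m$ under the right $\CG_{\l}$-action on the \emph{second} factor does follow from associativity, since $x(y\vartheta(h))=(xy)\vartheta(h)$. Invariance under the action on the \emph{first} factor, however, requires $x\vartheta(h)y=xy\vartheta(h')$. That is, the image of $\vartheta$ must be normal in $L\ltimes\CG_{\q}$ (formally: $(y,h)\mapsto y^{-1}\vartheta(h)y$ must factor through $\vartheta$). This does not follow from $\vartheta$ being a homomorphism and $m$ being associative; it is precisely where the Lie pair axioms enter.

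Heuristically, take the law $(g_1,q_1)(g_2,q_2)=(g_1g_2,\Ad_{g_2^{-1}}(q_1)q_2)$ encoded in \eqref{eq:mulofDLLq}, and $\vartheta(h)=(h,h^{-1})$. Conjugation by $(g,1)$ gives $(ghg^{-1},\Ad_g(h^{-1}))$, which is $\vartheta(ghg^{-1})$ by the $L$-equivariance of $\iota$. Conjugation by $(1,q)$ gives $(h,\Ad_{h^{-1}}(q)h^{-1}q^{-1})$, which is $\vartheta(h)$ only because $\Ad$ restricted to $\CG_{\l}$ is conjugation through $\iota$, i.e.\ the axiom $\ad=[\iota(\cdot),\cdot]$.

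Without that axiom the conclusion fails even though all your ingredients survive. Take $L=\R$ acting trivially on $\q=\mathrm{span}\{X,Y\}$ with $[X,Y]=Y$, and $\iota(\l)=\BC X$. Then $\vartheta$ is still a homomorphism. Yet in the distribution algebra $\RD^{-\infty}_c(L)\otimes\RU(\q)$, the left ideal generated by ${}^t\vartheta^*(\l)$ is not two-sided, so no multiplication on the quotient is compatible with $\pi$.

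The same defect affects the inverse. Since $i(x\vartheta(h))=\vartheta(h)^{-1}i(x)$ is a \emph{left} translate, $\pi\circ i$ descends only given normality. The fact that $\pi$ is an epimorphism lets you verify identities among morphisms already constructed, but it cannot produce the descended inversion.

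So for (c) you should either cite \cite{CSW4} as the paper does, or first derive normality of $\vartheta(\CG_{\l})$ from both Lie pair axioms and only then invoke the universal property of the quotient.

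Two smaller points. First, injectivity of $(\pi\times\pi\times\pi)^*$ on global sections means injectivity of $\pi^*\wt\otimes_{\pi}\pi^*\wt\otimes_{\pi}\pi^*$. This is not automatic for completed projective tensor products, though it does hold here by nuclearity together with (b). Second, in (b) the spaces need not be Fr\'echet, since $L$ may have uncountably many components. The open mapping argument therefore has to be run component by component.
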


By Propositions \ref{prop:formalliegrouptohopfalg0} and \ref{prop:quo}, we have that 
$\CO_{(L\ltimes \CG_{\q})/\CG_{\l}}(L)$ is a  Hopf formal  algebra, and that 
$\RD^{-\infty}_c(L;\CO_{(L\ltimes \CG_{\q})/\CG_{\l}})$ is a Hopf formal coalgebra.

Recall from Proposition \ref{prop:OpfLLq} that $\Hom_{\BC}(\RU(\q),\RC^{\infty}(L))$ is a  Hopf formal algebra, and that $\RD^{-\infty}_c(L)\otimes_{\Ri}\RU(\q)$ is a Hopf formal coalgebra. 

Let $\Hom_{\RU(\l)}(\RU(\q),\RC^{\infty}(L))$ denote the subspace  of $\Hom_{\BC}(\RU(\q),\RC^{\infty}(L))$ consisting of all $\RU(\l)$-equivariant maps, where $\RU(\l)$ acts on $\RU(\q)$ by left multiplication, and on $\RC^{\infty}(L)$ via
\[\RU(\l)\times \RC^{\infty}(L)\rightarrow \RC^{\infty}(L),\quad
(\tau,f)\mapsto(g\mapsto \la \Ev_g\tau,f\ra
).
\] Equip $\Hom_{\RU(\l)}(\RU(\q),\RC^{\infty}(L))$ with the subspace topology of $\Hom_{\BC}(\RU(\q),\RC^{\infty}(L))$. Then it becomes an LCS.

The tensor product $\RD^{-\infty}_c(L)\otimes_{\RU(\l)}\RU(\q)$ is a quotient space of $\RD^{-\infty}_c(L)\otimes_{\Ri}\RU(\q)$. Equipped with the quotient topology, it becomes an LCS.


\begin{prpd}\label{prop:OofLLqLl}
\noindent (a) The LCS $\Hom_{\RU(\l)}(\RU(\q),\RC^{\infty}(L))$ is a Hopf formal algebra, whose Hopf formal algebra structure is the unique one such that the inclusion map 
\be\label{eq:HomultoHom}
\Hom_{\RU(\l)}(\RU(\q),\RC^{\infty}(L)) \rightarrow \Hom_{\BC}(\RU(\q),\RC^{\infty}(L))
\ee 
is a homomorphism between Hopf formal algebras. Moreover, there is a unique identification 
\be\label{eq:=onOG}
\CO_{(L\ltimes \CG_{\q})/\CG_{\l}}(L)
   =\Hom_{\RU(\l)}(\RU(\q),\RC^{\infty}(L))
\ee 
between Hopf formal algebras such that the diagram 
\[
\begin{CD}
    \CO_{(L\ltimes \CG_{\q})/\CG_{\l}}(L) @>\eqref{eq:=onOG}>> \Hom_{\RU(\l)}(\RU(\q),\RC^{\infty}(L)) \\
    @V\pi^*VV @VV\eqref{eq:HomultoHom}V \\
    \CO_{L\ltimes \CG_{\q}}(L\times\{e\}) @>\eqref{eq:OLLqtoHom}>> \Hom_{\BC}(\RU(\q),\RC^{\infty}(L))
\end{CD}
\]
commutes.

\noindent (b) The LCS $\RD^{-\infty}_c(L)\otimes_{\RU(\l)}\RU(\q)$ is a Hopf formal coalgebra, whose Hopf formal coalgebra structure is the unique one such that the quotient map
\be \label{eq:DtoD_U}
\RD^{-\infty}_c(L)\otimes_{\Ri}\RU(\q)
   \longrightarrow
   \RD^{-\infty}_c(L)\otimes_{\RU(\l)}\RU(\q)
\ee
is a homomorphism between Hopf formal coalgebras. Moreover, there is a unique identification
\be\label{eq:D=DU}
\RD^{-\infty}_c(L;\CO_{(L\ltimes \CG_{\q})/\CG_{\l}})
   =\RD^{-\infty}_c(L)\otimes_{\RU(\l)}\RU(\q)
\ee
between Hopf formal coalgebras such that the diagram 
\[
\begin{CD}
    \RD^{-\infty}_c(L\times\{e\};\CO_{L\ltimes \CG_{\q}}) @>\eqref{eq:indenDtoDotimes}>> \RD^{-\infty}_c(L)\otimes_{\Ri}\RU(\q) \\
    @V{}^t\pi^*VV @VV\eqref{eq:DtoD_U}V \\
    \RD^{-\infty}_c(L;\CO_{(L\ltimes \CG_{\q})/\CG_{\l}}) @>\eqref{eq:D=DU}>> \RD^{-\infty}_c(L)\otimes_{\RU(\l)}\RU(\q)
\end{CD}
\]
commutes.

\noindent (c) The pairing  
\[
\Hom_{\RU(\l)}(\RU(\q),\RC^{\infty}(L)) \times (\RD^{-\infty}_c(L)\otimes_{\RU(\l)}\RU(\q)) \rightarrow \BC
\]
induced by the pairing between $\CO_{(L\ltimes\CG_{\q})/\CG_{\l}}(L)$ and $\RD^{-\infty}_c(L;\CO_{(L\ltimes \CG_{\q})/\CG_{\l}})$ is given by
\be \label{eq:pairing2}
(f,\tau\otimes\eta) \mapsto \bigl\langle \tau,  f(\eta)  \bigr\rangle.
\ee
\end{prpd}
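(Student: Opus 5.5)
The plan is to identify the $\CG_{\l}$-invariant formal functions on $L\ltimes\CG_{\q}$ with $\Hom_{\RU(\l)}(\RU(\q),\RC^{\infty}(L))$ inside $\Hom_{\BC}(\RU(\q),\RC^{\infty}(L))$, and then dualize. By Proposition \ref{prop:quo}(a),(b), $\pi^*$ identifies $\CO_{(L\ltimes \CG_{\q})/\CG_{\l}}(L)$, as an LCS, with the closed subspace $\CO_{L\ltimes \CG_{\q}}(L\times\{e\})^{\CG_{\l}}$. So the first task is to show that, under \eqref{eq:OLLqtoHom}, this subspace is exactly $\Hom_{\RU(\l)}(\RU(\q),\RC^{\infty}(L))$.

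To do this, I would write out the pullback of the right action \eqref{eq:Llonsemi}. This is a map $\Hom_{\BC}(\RU(\q),\RC^\infty(L))\to \Hom_{\BC}(\RU(\q),\RC^\infty(L))\wt\otimes_\pi \RU(\l)'$. I would compute it from the explicit comultiplication \eqref{eq:mulofDLLq} together with ${}^t\vartheta^*(\xi)=\sum_i \xi_i^{[1]}\otimes\check{\xi}_i^{[2]}$, where $\xi\in\RU(\l)$ is viewed in $\RD^{-\infty}_c(L)\otimes_\Ri\RU(\q)$.

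Pairing with $\tau\otimes\eta\otimes\xi$, and using \eqref{eq:pairing1} and the fact that $\Ad_{g^{-1}}$ on $\iota(\l)$ is compatible with the adjoint action, the invariance condition of \cite[Definition 7.2]{CSW4} ($f$ pulled back equals $f\otimes\Ev_e$) becomes an identity on generators $\xi=X\in\l$. Because of the antipode in $\vartheta$, this identity should read $(X.f)(\eta)=f(X\eta)$ for all $X\in\l$ and $\eta\in\RU(\q)$, with $X$ acting on $\RC^\infty(L)$ through $\la\Ev_g X,\cdot\ra$. That is precisely $\RU(\l)$-equivariance. This bookkeeping with Sweedler notation and the twist by $\Ad$ is the step I expect to be the main obstacle. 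I would first reduce to $\xi$ primitive, using multiplicativity of the invariance condition in $\xi$, so that only the first-order terms survive.

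Granting this, (a) follows. The structure maps of $\CO_{(L\ltimes\CG_\q)/\CG_\l}(L)$ are carried by $\pi^*$ to those of $\Hom_\BC(\RU(\q),\RC^\infty(L))$, because $\pi$ is a homomorphism of formal Lie groups (Proposition \ref{prop:quo}(c)) and by \eqref{eq:O_3}. Hence the subspace is a Hopf formal subalgebra and the diagram commutes. Uniqueness of both the structure and the identification follows from injectivity of the inclusion \eqref{eq:HomultoHom}, together with injectivity of its $\wt\otimes_\pi$-square. The latter holds because $\pi^*\otimes\pi^*=(\pi\times\pi)^*$ and $\pi\times\pi$ is again a quotient map, which is injective on functions.

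For (b), I would take transposes. Since $\pi^*$ is a closed embedding of these nuclear-type (reflexive) formal algebras, ${}^t\pi^*$ is a surjective topological homomorphism. Its kernel is the annihilator of $\Hom_{\RU(\l)}(\RU(\q),\RC^\infty(L))$ in $\RD^{-\infty}_c(L)\otimes_\Ri\RU(\q)$. By \eqref{eq:pairing1} and the computation above, this annihilator should be spanned by the elements $\tau X\otimes\eta-\tau\otimes X\eta$ with $X\in\l$; this is checked by biduality, using that the span is closed because $\RU(\q)$ carries the finest topology. Hence ${}^t\pi^*$ induces \eqref{eq:D=DU} as a topological isomorphism onto the quotient LCS. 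The Hopf structure then transfers through the surjective quotient map \eqref{eq:DtoD_U}, which gives uniqueness, and commutativity of the diagram holds by construction. Finally, (c) is immediate from \eqref{eq:pairing1}, since the pairing is the restriction to the subspace and quotient.
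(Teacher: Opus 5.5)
Your part (a) is essentially the paper's argument. Lemma~\ref{lem:OtoHoml} there also reduces $\CG_{\l}$-invariance to the action of $X\in\l$, and it obtains $(X.f)(g,\eta)=\la \Ev_g X, f(\cdot,\eta)\ra - f(g,X\eta)$. This is exactly what your product $(\tau\otimes\eta)\cdot{}^t\vartheta^*(X)=\tau X\otimes\eta-\tau\otimes X\eta$ gives once the $[X,\eta]$ terms cancel.

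Part (b) takes a different route from the paper. The paper computes the dual of the quotient, $(\RD^{-\infty}_c(L)\otimes_{\RU(\l)}\RU(\q))'\cong \Hom_{\RU(\l)}(\RU(\q),\RC^{\infty}(L))$ (Lemma~\ref{lem:D'toHom}), and then uses reflexivity of both sides. You instead transpose the closed embedding $\pi^*$ and identify $\ker {}^t\pi^*$. That route is viable. Your unexplained claim that ${}^t\pi^*$ is a strict surjection is true. Write $E:=\Hom_{\BC}(\RU(\q),\RC^{\infty}(L))$ and $F:=\Hom_{\RU(\l)}(\RU(\q),\RC^{\infty}(L))$. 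Since $E$ is reflexive, the quotient topology on $E'/F^\perp$ has dual $F^{\perp\perp}=F$. It is therefore coarser than the Mackey topology $\tau(F',F)$, which equals $\beta(F',F)$ because $F$ is semi-reflexive. Continuity of restriction gives the reverse inclusion.

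\textbf{The gap is the closedness of the span $N$} of the elements $\tau X\otimes\eta-\tau\otimes X\eta$. Bipolarity only gives $\ker{}^t\pi^*=F^\perp=\overline N$. You need $\overline N=N$ for two reasons: to land on the algebraic quotient $\RD^{-\infty}_c(L)\otimes_{\RU(\l)}\RU(\q)$, and for that quotient to be Hausdorff at all.

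The finest topology on $\RU(\q)$ does not give this by itself. It only makes $E'$ a countable locally convex direct sum of copies of $\RD^{-\infty}_c(L)$, where subspaces need not be closed. Moreover, the generators of $N$ also move the $\RD^{-\infty}_c(L)$-factor.

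The missing input is the PBW decomposition \eqref{eq:US=U}: $\RU(\q)\cong\RU(\l)\otimes\RS(\l^c)$ as free left $\RU(\l)$-modules. Under it, $N$ is exactly the kernel of the map
\[
\RD^{-\infty}_c(L)\otimes_{\Ri}\RU(\l)\otimes\RS(\l^c)\to\RD^{-\infty}_c(L)\otimes_{\Ri}\RS(\l^c),\qquad \tau\otimes\xi\otimes s\mapsto\tau\xi\otimes s.
\]
This map is continuous, because it is separately continuous and $\RU(\l)$, $\RS(\l^c)$ carry the finest topology. Its target is Hausdorff, so $N$ is closed. This is precisely the ingredient the paper uses in Lemma~\ref{lem:D'toHom}.

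Once you add it, your argument goes through. It also has a small advantage: \eqref{eq:D=DU} is induced by ${}^t\pi^*$ itself, so the commutative square and the transfer of the Hopf structure come for free.
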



The following result is a direct consequence of \eqref{eq:realAd} together with Propositions \ref{prop:quo} and \ref{prop:OofLLqLl}.

\begin{cord}\label{cor:realAd}
  Let $G$ be a formal Lie group. Then the adjoint action $\Ad: \underline{G}\curvearrowright\g$ coincides with the action 
\be\underline{G}\curvearrowright \g, \quad (g,\eta)\mapsto \Ev_g\eta\Ev_{g^{-1}}\quad\text{(see \eqref{eq:eta12})}.\ee
\end{cord}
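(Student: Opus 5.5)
We reduce to a concrete model of $G$ and then read off the adjoint action from the multiplication formula for the semidirect product. By Propositions \ref{prop:Gtopair} and \ref{prop:quo}(d), $G$ is isomorphic, as a formal Lie group, to $(L\ltimes\CG_{\q})/\CG_{\l}$ with $(\q,L)=(\g,\underline G)$. This isomorphism induces an identification of the Lie pairs. Concretely, the adjoint action $\Ad:\underline G\curvearrowright\g$ defined in \cite[(5.12)]{CSW4} corresponds to the given action $\Ad: L\curvearrowright\q$. Moreover, both sides of the asserted identity are transported compatibly by ${}^t\varphi^*$ for an isomorphism $\varphi$. Indeed, ${}^t\varphi^*$ is a Hopf coalgebra isomorphism, so it preserves products, primitive elements, and Dirac elements: ${}^t\varphi^*(\Ev_g)=\Ev_{\overline\varphi(g)}$. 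Hence it suffices to prove the statement for $G=(L\ltimes\CG_{\q})/\CG_{\l}$, where $\underline G=L$ and $\g=\q$.

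In this model we use Proposition \ref{prop:OofLLqLl}(b). It gives $\RD^{-\infty}_c(L;\CO_G)=\RD^{-\infty}_c(L)\otimes_{\RU(\l)}\RU(\q)$, and the quotient map ${}^t\pi^*$ from $\RD^{-\infty}_c(L)\otimes_\Ri\RU(\q)$ is a homomorphism of Hopf formal coalgebras. Next we identify the relevant elements under this map. First, ${}^t\pi^*(\Ev_{(g,e)})=\Ev_g$, since $\overline\pi=\mathrm{id}_L$ and Dirac functionals are carried to Dirac functionals. Second, the Lie algebra $\g$ is the image of $\{\Ev_e\otimes\eta:\eta\in\q\}$. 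This uses that the map $\RU(\q)\to\RD^{-\infty}_c(L)\otimes_{\RU(\l)}\RU(\q)$ is injective, sends primitives to primitives, and that its image of $\q$ is exactly $\mathrm{Prim}$. These facts give the identification $\g=\q$ underlying Proposition \ref{prop:quo}(d), which we take from \cite{CSW4}.

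With these identifications, we apply ${}^t\pi^*$ to \eqref{eq:realAd}. Since ${}^t\pi^*$ is multiplicative, this yields
\[\Ev_g\,\eta\,\Ev_{g^{-1}}={}^t\pi^*\bigl(\Ev_{(g,e)}\eta\Ev_{(g^{-1},e)}\bigr)={}^t\pi^*(\Ad_g\eta)=\Ad_g\eta\]
for all $g\in L$ and $\eta\in\q$. This is exactly the claim.

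The main obstacle is the first step. One must check that the identification $\g=\q$ and $\underline G=L$ furnished by \cite[Propositions 8.3, 9.4]{CSW4} is the one under which the adjoint action of \cite[(5.12)]{CSW4} becomes the given $\Ad$. That is, the natural isomorphism in Proposition \ref{prop:quo}(d) must be compatible with the description of $\g$ as the images of $\Ev_e\otimes\eta$, $\eta\in\q$. We would verify this directly from the construction of the functor \eqref{eq:Gtopair}, which defines $\Ad$ via conjugation by the formal group structure. Alternatively, one could take the displayed conjugation formula as the characterization of $\Ad$ in \cite{CSW4}, in which case the naturality argument above finishes the proof.
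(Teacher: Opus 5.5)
Your proposal is correct and takes essentially the same route as the paper. The paper derives the corollary directly from \eqref{eq:realAd} together with Propositions \ref{prop:quo} and \ref{prop:OofLLqLl}: it reduces to the model $(L\ltimes\CG_{\q})/\CG_{\l}$ and pushes \eqref{eq:realAd} through the Hopf homomorphism ${}^t\pi^*$. The compatibility of the identifications $\g=\q$ and $\underline G=L$, which you flag as the main obstacle, is likewise taken by the paper from Proposition \ref{prop:quo}(d), i.e.\ from \cite{CSW4}, without further comment.
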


In the rest of this subsection, we prove Proposition~\ref{prop:OofLLqLl}. 

As in \eqref{eq:Hom()}, an element $f\in\Hom_{\RU(\l)}(\RU(\q),\RC^\infty(L))$ may be regarded as a map
\be \label{eq:Ueq}
f:\ L\times\RU(\q)\longrightarrow \BC
\ee
that is smooth in the first variable, linear in the second variable, and satisfies
\[
f(g,\tau\eta)=\bigl\langle \Ev_g\tau,\; f(\,\cdot\,,\eta) \bigr\rangle
\]
for all $\tau\in\RU(\l)$, $g\in L$, and $\eta\in\RU(\q)$. Here, $f(\,\cdot\,,\eta)$ is the smooth function $x\mapsto f(x,\eta)$ on $L$.

\begin{lemd}\label{lem:OtoHoml}
Under the identification $\CO_{L\ltimes \CG_{\q}}(L\times \{e\})=\Hom_{\BC}(\RU(\q), \RC^{\infty}(L))$ (see \eqref{eq:OLLqtoHom}), we have that
\be\label{eq:OsubHom=}
\CO_{L\ltimes \CG_{\q}}(L\times \{e\})^{\CG_{\l}}
=
\Hom_{\RU(\l)}(\RU(\q), \RC^{\infty}(L))
\ee
as LCS.
\end{lemd}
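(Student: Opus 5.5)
The plan is to unwind the definition of $\CG_{\l}$-invariance from \cite[Definition 7.2]{CSW4} for the right formal action \eqref{eq:Llonsemi}, and to translate it through the identifications of Lemma \ref{lem:OLLqpro} and Proposition \ref{prop:OpfLLq}. A formal function $f\in\CO_{L\ltimes \CG_{\q}}(L\times\{e\})$ is $\CG_{\l}$-invariant when its pullback under the action morphism $a:=m\circ(\mathrm{id}\times\vartheta)$ equals its pullback under the projection $p:(L\ltimes\CG_{\q})\times\CG_{\l}\to L\ltimes\CG_{\q}$. Both pullbacks lie in
\[
\CO_{(L\ltimes\CG_\q)\times\CG_\l}(L\times\{e\}\times\{e\})=\Hom_{\BC}(\RU(\q)\otimes\RU(\l),\RC^\infty(L)).
\]
By duality (Lemma \ref{lem:OLLqpro}(b) and \eqref{eq:pairing1}), this equality can be tested by pairing against the elements $\tau\otimes\eta\otimes\zeta$ with $\tau\in\RD^{-\infty}_c(L)$, $\eta\in\RU(\q)$, $\zeta\in\RU(\l)$. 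These span a dense subspace of the dual, and since $\RC^\infty(L)$ is separated by $\RD^{-\infty}_c(L)$, it suffices to use $\tau=\Ev_g$.

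Next I will compute ${}^ta^*(\Ev_g\otimes\eta\otimes\zeta)$ and ${}^tp^*(\Ev_g\otimes\eta\otimes\zeta)$ in $\RD^{-\infty}_c(L)\otimes_{\Ri}\RU(\q)$. For the projection this is $\epsilon(\zeta)\,\Ev_g\otimes\eta$, where $\epsilon$ is the counit of $\RU(\l)$. For the action morphism it is the product, in the Hopf formal coalgebra of Proposition \ref{prop:DofLLq}, of $\Ev_g\otimes\eta$ with ${}^t\vartheta^*(\zeta)$. By \eqref{eq:idtimesi}, ${}^t\vartheta^*(\zeta)=\sum \zeta^{[1]}\otimes\iota(\check\zeta^{[2]})$, with $\zeta^{[1]}$ viewed in $\RU(\l)\subset\RD^{-\infty}_c(L)$. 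Applying \eqref{eq:mulofDDLLq} together with the identity ${}^t(\mathrm{id}\times i)^*(\zeta)=\sum\zeta^{[1]}\otimes\check\zeta^{[2]}$, and using $\Ev_g.\eta=\Ad_g\eta$, the product becomes an expression of the form $\sum \Ev_g\zeta^{[1]}\otimes(\text{twisted }\eta)\,\check\zeta^{[2]}$.

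Pairing with $f$ via \eqref{eq:pairing1} and doing a Sweedler-type reduction should turn the invariance condition into
\[
\sum\langle \Ev_g\zeta^{[1]},f(\cdot,\eta\check\zeta^{[2]})\rangle=\epsilon(\zeta)f(g,\eta)\quad\text{for all }\zeta,g,\eta .
\]
Taking $\zeta\in\l$ primitive reduces this to $\langle\Ev_g\zeta,f(\cdot,\eta)\rangle=f(g,\eta\zeta)$-type relations. I need to fix the left/right convention so that it matches the $\RU(\l)$-equivariance $f(g,\zeta\eta)=\langle\Ev_g\zeta,f(\cdot,\eta)\rangle$ of \eqref{eq:Ueq}. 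The twisting by $\Ad$ in \eqref{eq:mulofDLLq} is exactly what converts right multiplication into left multiplication, because $\Ad_{g}$ on $\l$ intertwines the two. Since $\l$ generates $\RU(\l)$, and both conditions are multiplicative in $\zeta$ (the general Sweedler identity follows from the primitive one by induction on degree, and conversely), the two subspaces coincide.

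The main obstacle is this bookkeeping step: tracking the inversion, the antipode, and the $\Ad$-twist in \eqref{eq:mulofDDLLq} correctly so that invariance lands precisely on left $\RU(\l)$-equivariance rather than some twisted variant. I would first verify it for $\zeta\in\l$ by differentiating the formula, treating the right action of $\exp(t\tau)$ on $L\times\{e\}$ heuristically. The statement "as LCS" then needs no extra work. By Proposition \ref{prop:quo}(b), the left side carries the subspace topology of $\CO_{L\ltimes\CG_\q}(L\times\{e\})$, the right side carries the subspace topology of $\Hom_{\BC}(\RU(\q),\RC^\infty(L))$ by definition, and \eqref{eq:OLLqtoHom} is a topological isomorphism.
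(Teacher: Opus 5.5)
Your strategy is the paper's. The paper encodes invariance under $m\circ(\mathrm{id}\times\vartheta)$ as the $\RU(\l)$-module structure \eqref{eq:OtoU}, reduces it to $\tau.f=0$ for $\tau\in\l$, computes \eqref{eq:tauonf}, and compares the result with \eqref{eq:Ueq}, just as you plan; the topological part is handled the same way.

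The gap is in the one computation that carries the content, and your proposed repair rests on the wrong mechanism. In \eqref{eq:mulofDDLLq} the left factor $\Ev_g\otimes\eta$ enters only through left convolution by $\Ev_g$ on the $L$-factor. The twist acting on $\eta$ is $\tau_i^{[2]}.\eta$, where $\sum_i\tau_i^{[1]}\otimes\tau_i^{[2]}={}^t(\mathrm{id}_L\times i)^*(\tau)$ and $\tau$ is the $L$-component of ${}^t\vartheta^*(\zeta)$, which lies in $\RU(\l)$ and is supported at $e$. So the twist is the adjoint action of $\RU(\l)$ on $\RU(\q)$, with $\xi.\eta=[\xi,\eta]$ for $\xi\in\l$. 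It is not $\Ad_g$: the identity $\Ev_g.\eta=\Ad_g\eta$ you invoke plays no role, and $\Ad_g$ cannot convert right into left multiplication. For $\zeta\in\l$, using ${}^t\vartheta^*(\zeta)=\zeta\otimes\Ev_e-\Ev_e\otimes\zeta$ and ${}^t(\mathrm{id}_L\times i)^*(\zeta)=\zeta\otimes\Ev_e-\Ev_e\otimes\zeta$, one gets
\[
(\Ev_g\otimes\eta)\,{}^t\vartheta^*(\zeta)=\Ev_g\zeta\otimes\eta-\Ev_g\otimes[\zeta,\eta]-\Ev_g\otimes\eta\zeta=\Ev_g\zeta\otimes\eta-\Ev_g\otimes\zeta\eta .
\]
Its pairing with $f$ is $\la\Ev_g\zeta,f(\cdot,\eta)\ra-f(g,\zeta\eta)$, which is exactly \eqref{eq:tauonf}. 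For general $\zeta\in\RU(\l)$, the same bookkeeping (using $\sum(\xi^{[1]}.\eta)\xi^{[2]}=\xi\eta$ and cocommutativity) gives $\sum\Ev_g\zeta^{[1]}\otimes\check\zeta^{[2]}\eta$, with $\check\zeta^{[2]}$ to the \emph{left} of $\eta$.

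Your displayed condition, with $f(\cdot,\eta\check\zeta^{[2]})$, has dropped exactly the commutator term, and this is not a harmless convention issue. Already for $\q=\l$ nonabelian, the right-multiplication condition forces $\la\Ev_g\xi,f(\cdot,\Ev_e)\ra=0$ for all $\xi\in[\l,\l]$. It therefore cuts out a strictly smaller space than $\Hom_{\RU(\l)}(\RU(\q),\RC^\infty(L))$. Once the correct formula is in place, the rest of your plan goes through and in fact simplifies. Left equivariance gives $\sum f(g,\zeta^{[1]}\check\zeta^{[2]}\eta)=\epsilon(\zeta)f(g,\eta)$ directly, with $\epsilon$ the counit of $\RU(\l)$, so invariance follows without any induction on degree. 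The converse needs only $\zeta\in\l$, since $\l$ generates $\RU(\l)$.
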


\begin{proof}
By the commutative diagram \eqref{eq:chainofG} and the fact that
\[
\vartheta=(\overline{\vartheta},\vartheta^*):\ \CG_{\l}\rightarrow L\ltimes \CG_{\q}
\]
is a homomorphism between formal Lie groups, it is straightforward that
\[
\RU(\l)\curvearrowright \CO_{L\ltimes \CG_{\q}}(L\times \{e\}),\qquad 
(\tau,f)\mapsto \tau.f
\]
is a continuous $\RU(\l)$-module structure, where $\tau.f$ is the image of $f$ under the composition
\be \label{eq:OtoU}
\begin{aligned}
&\CO_{L\ltimes \CG_{\q}}(L\times \{e\})
\xrightarrow{m^*} 
\CO_{L\ltimes \CG_{\q}}(L\times \{e\}) \wt\otimes_{\pi} \CO_{L\ltimes \CG_{\q}}(L\times \{e\})  
\xrightarrow{\mathrm{id}_{\CO_{L\ltimes \CG_{\q}}(L\times \{e\})}\otimes \vartheta^*} \\
&\qquad \CO_{L\ltimes \CG_{\q}}(L\times \{e\}) \wt\otimes_{\pi} (\RU(\l))' 
\xrightarrow{\mathrm{id}_{\CO_{L\ltimes \CG_{\q}}(L\times \{e\})}\otimes \tau}
\CO_{L\ltimes \CG_{\q}}(L\times \{e\}).
\end{aligned}
\ee
Then for each $f\in \CO_{L\ltimes \CG_{\q}}(L\times \{e\})$, one has that
\[
f\in \CO_{L\ltimes \CG_{\q}}(L\times \{e\})^{\CG_{\l}}
\quad\Longleftrightarrow\quad
\tau.f=0\quad\text{for all }\tau\in \l.
\]

Using \eqref{eq:OtoU} and \eqref{eq:mulofDLLq}, it is straightforward that, under the identification $\CO_{L\ltimes \CG_{\q}}(L\times \{e\})=\Hom_{\BC}(\RU(\q),\RC^{\infty}(L))$,
\be\label{eq:tauonf}
(\tau.f)(g,\eta)=
\bigl\langle \Ev_g\tau,\; f(\,\cdot\,,\eta) \bigr\rangle - f(g,\tau\eta)
\ee
for all $\tau\in\l$, $f\in \CO_{L\ltimes \CG_{\q}}(L\times \{e\})$, $g\in L$, and $\eta\in \RU(\q)$.

Note that $f\in \Hom_{\RU(\l)}(\RU(\q), \RC^{\infty}(L))$ if and only if $f\in \Hom_{\l}(\RU(\q), \RC^{\infty}(L))$. Then the lemma follows immediately from \eqref{eq:Ueq}.
\end{proof}


By taking the transpose of the quotient map \eqref{eq:DtoD_U}, we obtain an injective continuous $\BC$-linear map 
\be\label{eq:D'toHom} (\RD^{-\infty}_c(L)\otimes_{\RU(\l)}\RU(\q))'\rightarrow \Hom_{\BC}(\RU(\q),\RC^{\infty}(L)),\  f\mapsto \big((g,\eta)\mapsto f(\Ev_g\otimes\eta)\big).\ee

\begin{lemd}\label{lem:D'toHom} The image of \eqref{eq:D'toHom} is contained in $\Hom_{\RU(\l)}(\RU(\q),\RC^{\infty}(L))$. Furthermore, the 
continuous map 
    \be \label{eq:D'toHoml}(\RD^{-\infty}_c(L)\otimes_{\RU(\l)}\RU(\q))'\rightarrow \Hom_{\RU(\l)}(\RU(\q),\RC^{\infty}(L)), \ f\mapsto ((g,\eta)\mapsto f(\Ev_g\otimes\eta))\ee induced by \eqref{eq:D'toHom}  is an isomorphism of LCS.
\end{lemd}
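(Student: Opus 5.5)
The plan is to reduce everything to the known duality \eqref{eq:DtoDotimes} between $\Hom_{\BC}(\RU(\q),\RC^{\infty}(L))$ and $\RD^{-\infty}_c(L)\otimes_{\Ri}\RU(\q)$, together with the standard fact that the strong dual of a quotient $E/F$ is the annihilator $F^{\perp}\subset E'$.

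First I treat the image claim. Let $f\in(\RD^{-\infty}_c(L)\otimes_{\RU(\l)}\RU(\q))'$ and regard it as a functional on $\RD^{-\infty}_c(L)\otimes_{\Ri}\RU(\q)$ that vanishes on the relations $\sigma\tau\otimes\eta-\sigma\otimes\tau\eta$, for $\sigma\in\RD^{-\infty}_c(L)$ and $\tau\in\RU(\l)$. Take $\sigma=\Ev_g$. This gives
\[
f(\Ev_g\otimes\tau\eta)=f(\Ev_g\tau\otimes\eta).
\]
By Lemma \ref{lem:OLLqpro}(b), the functional $\sigma\mapsto f(\sigma\otimes\eta)$ is continuous on $\RD^{-\infty}_c(L)$. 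Its restriction to Dirac masses is therefore the smooth function $F_\eta:=f(\Ev_{\cdot}\otimes\eta)$, and the functional itself is $\sigma\mapsto\langle\sigma,F_\eta\rangle$, using $\RD^{-\infty}_c(L)'=\RC^\infty(L)$ (reflexivity). Hence $f(\Ev_g\tau\otimes\eta)=\langle\Ev_g\tau,F_\eta\rangle$. This is exactly the $\RU(\l)$-equivariance condition in \eqref{eq:Ueq}.

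Next I prove bijectivity. Injectivity is inherited from \eqref{eq:D'toHom}. For surjectivity, I use that \eqref{eq:D'toHom} is the transpose of the quotient map, composed with the identification $(\RD^{-\infty}_c(L)\otimes_{\Ri}\RU(\q))'=\Hom_{\BC}(\RU(\q),\RC^{\infty}(L))$ coming from \eqref{eq:pairing1} and Lemma \ref{lem:OLLqpro}. Given $h\in\Hom_{\RU(\l)}(\RU(\q),\RC^\infty(L))$, the functional $\sigma\otimes\eta\mapsto\langle\sigma,h(\eta)\rangle$ is continuous. I then check that it kills the relations: $\langle\sigma\tau,h(\eta)\rangle=\langle\sigma,h(\tau\eta)\rangle$. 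This identity holds for $\sigma=\Ev_g$ by \eqref{eq:Ueq}. Since the span of the Dirac masses is dense in $\RD^{-\infty}_c(L)$ and both sides are continuous in $\sigma$ (right multiplication by $\tau$ is continuous), it holds for all $\sigma$. So the functional descends to the quotient with the quotient topology.

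The step I expect to be the main obstacle is the topological one: showing that the inverse map is continuous, so that the subspace topology of pointwise convergence on $\Hom_{\RU(\l)}$ matches the strong dual topology. The dual of a quotient $E/F$ is $F^\perp$, and the strong topology on $(E/F)'$ is the topology of uniform convergence on images of bounded sets of $E$ provided every bounded set of $E/F$ lifts. That lifting is the delicate point.

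Here I would try to avoid the lifting issue altogether by using the bijection just established. The strong topology on $(\RD^{-\infty}_c(L)\otimes_{\RU(\l)}\RU(\q))'$ is at least as fine as that induced from $(\RD^{-\infty}_c(L)\otimes_\Ri\RU(\q))'$, and the latter is isomorphic to $\Hom_{\BC}(\RU(\q),\RC^\infty(L))$. For the converse, I would invoke an open mapping argument. By Lemma \ref{lem:OtoHoml} and Proposition \ref{prop:quo}(b), $\Hom_{\RU(\l)}(\RU(\q),\RC^{\infty}(L))$ is a closed subspace of a formal algebra, hence a formal algebra itself. Such spaces are products of Fréchet-type spaces with webbed, ultrabornological structure, so an open mapping theorem of De Wilde type applies provided the strong dual on the other side is webbed. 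Failing that, I would argue directly. Every bounded subset of $\RD^{-\infty}_c(L)\otimes_{\Ri}\RU(\q)$ lies in $B\otimes U_N$, with $B$ bounded in $\RD^{-\infty}_c(L)$ and $U_N$ a finite-dimensional filtration piece, since $\RU(\q)$ carries the finest locally convex topology. Quotient bounded sets then lift similarly, using that the kernel meets each $\RD^{-\infty}_c(L)\otimes U_N$ in a closed complemented piece. With these lifts, strong convergence reduces to uniform convergence of $h(\eta)$ on bounded sets of distributions for $\eta$ in finite-dimensional $U_N$, which is precisely the topology of $\Hom_{\BC}(\RU(\q),\RC^\infty(L))$.
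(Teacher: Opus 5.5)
Your proof of the first assertion is fine, and so is your algebraic proof that \eqref{eq:D'toHoml} is bijective. Injectivity holds because the map is the transpose of a surjection. Surjectivity holds because the functional $\sigma\otimes\eta\mapsto\langle\sigma,h(\eta)\rangle$ attached to an equivariant $h$ kills the relations $\sigma\tau\otimes\eta-\sigma\otimes\tau\eta$.

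The gap is exactly at the step you flag. You never show that the inverse is continuous, i.e.\ that the strong topology on $(\RD^{-\infty}_c(L)\otimes_{\RU(\l)}\RU(\q))'$ is not strictly finer than the pointwise topology transported from $\Hom_{\RU(\l)}(\RU(\q),\RC^{\infty}(L))$.
\begin{itemize}
\item The open-mapping route is left conditional (``provided the strong dual on the other side is webbed''). The webbed and ultrabornological hypotheses of De Wilde's theorem are asserted rather than checked. They are not automatic here: $L$ may have uncountably many components, and then $\RC^{\infty}(L)$ is an uncountable product of Fr\'echet spaces.
\item The fallback route rests on the unproved claim that the kernel of $\RD^{-\infty}_c(L)\otimes_{\Ri}\RU(\q)\to\RD^{-\infty}_c(L)\otimes_{\RU(\l)}\RU(\q)$ meets each $\RD^{-\infty}_c(L)\otimes U_N$ in a complemented subspace. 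Even granting this, complements chosen separately for each $N$ do not by themselves let you lift bounded subsets of the quotient. For that you need the quotient topology to be that of a regular inductive limit of the pieces, which requires complements compatible in $N$.
\end{itemize}

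The missing idea, which is what the paper uses, is the Poincar\'e--Birkhoff--Witt theorem. For a complement $\l^c$ of $\l$ in $\q$, the map $\RU(\l)\otimes\RS(\l^c)\to\RU(\q)$, $\tau\otimes\eta\mapsto\tau\,\mathrm{symm}(\eta)$, is a linear isomorphism. For the finest locally convex topologies it is therefore a topological isomorphism, so $\RU(\q)$ is free over $\RU(\l)$. Two consequences follow.
\begin{itemize}
\item The map $\sigma\otimes\eta\mapsto\sigma\otimes\mathrm{symm}(\eta)$ is an isomorphism of LCS $\RD^{-\infty}_c(L)\otimes_{\Ri}\RS(\l^c)\to\RD^{-\infty}_c(L)\otimes_{\RU(\l)}\RU(\q)$. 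Its inverse $\sigma\otimes\tau\,\mathrm{symm}(\eta)\mapsto\sigma\tau\otimes\eta$ is continuous for the quotient topology.
\item The map $h\mapsto h\circ\mathrm{symm}$ is an isomorphism of LCS $\Hom_{\RU(\l)}(\RU(\q),\RC^{\infty}(L))\to\Hom_{\BC}(\RS(\l^c),\RC^{\infty}(L))$.
\end{itemize}
Under these identifications, \eqref{eq:D'toHoml} becomes the identity of $\Hom_{\BC}(\RS(\l^c),\RC^{\infty}(L))=(\RD^{-\infty}_c(L)\otimes_{\Ri}\RS(\l^c))'$. This is the same free duality as in Lemma \ref{lem:OLLqpro}.

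This argument gives bijectivity and bicontinuity at once, so your density argument becomes unnecessary. It is also precisely what would justify your complementation claim, with the compatible complements $\RD^{-\infty}_c(L)\otimes\mathrm{symm}(\RS^{\le N}(\l^c))$.
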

\begin{proof}
  For each $f\in(\RD^{-\infty}_c(L)\otimes_{\RU(\l)}\RU(\q))' $ and  $\tau\in \RU(\l)$, it is clear that 
    \[f(\Ev_g\tau\otimes\eta)=f(\Ev_g\otimes\tau\eta),\quad \text{for all $g\in L$ and $\eta\in \RU(\q)$}.\]
    This implies that the image of $f$ under the map \eqref{eq:D'toHom} is in $\Hom_{\RU(\l)}(\RU(\q),\RC^{\infty}(L))$. Then the first assertion follows. 

Now we prove the second assertion.
Let $\l^c$ be a $\BC$-linear subspace of $\q$ such that $\l\oplus\l^c=\q$. 
It is well-known that there is an isomorphism 
\be\label{eq:US=U} \begin{array}{rcl}
    \RU(\l)\otimes_{\Ri} \RS(\l^c) &  \rightarrow& \RU(\q)\\
    \tau\otimes\eta &   \mapsto& \tau\mathrm{symm}(\eta)
\end{array}\ee
of LCS (\cf \cite[0.4.3]{Wa}). Here $\RS(\l^c)$
is the symmetric algebra of $\l^c$, equipped with the finest
locally convex topology (see \cite[Example B.3]{CSW2}), and \[\mathrm{symm}: \ \RS(\l^c)\rightarrow \RU(\q)\] is the linear map such that 
\[\mathrm{symm}(X_1X_2\cdots X_k)=(1/k!)\sum_{\sigma\in \RS_k} X_{\sigma(1)}X_{\sigma(2)}\cdots X_{\sigma(k)}\]
for each $k\geq0$ and $X_1,X_2,\dots,X_k \in \l^{c}$, where $\RS_k$ is the symmetric group
of degree $k$. 

It follows from \eqref{eq:US=U} that 
\be\label{eq:D=[]}\RD^{-\infty}_c(L)\otimes_{\Ri} \RS(\l^c)\cong \RD^{-\infty}_c(L)\otimes_{\RU(\l)}\RU(\q)\ee
as LCS, and that 
\[\Hom_{\RU(\l)}(\RU(\q),\RC^{\infty}(L))\cong \Hom_{\BC}(\RS(\l^c),\RC^{\infty}(L))\] as LCS.
Moreover, 
\[(\RD^{-\infty}_c(L)\otimes_{\Ri} \RS(\l^c))'\cong \Hom_{\BC}(\RS(\l^c),\RC^{\infty}(L))\]
as LCS.
 Under these identifications, the map \eqref{eq:D'toHoml} coincides with the identity map 
$\mathrm{id}_{\Hom_{\BC}(\RS(\l^c),\RC^{\infty}(L))}$. 
Therefore, \eqref{eq:D'toHoml} is an isomorphism of LCS.
 \end{proof}
\noindent\textbf{Proof of Proposition \ref{prop:OofLLqLl}:} The assertion (a) in Proposition \ref{prop:OofLLqLl} follows from Lemma \ref{lem:OtoHoml}. 
Using the assertion (a) and Lemma \ref{lem:D'toHom}, we have that \[(\RD^{-\infty}_c(L)\otimes_{\RU(\l)}\RU(\q))'= \Hom_{\RU(\l)}(\RU(\q),\RC^{\infty}(L))=\CO_{(L\ltimes \CG_{\q})/\CG_{\l}}(L)\] is a reflexive LCS. On the other hand, it follows from \eqref{eq:D=[]}  that 
$\RD^{-\infty}_c(L)\otimes_{\RU(\l)}\RU(\q)$ is a reflexive LCS. 
Then there is an identification 
\[\RD^{-\infty}_c(L)\otimes_{\RU(\l)}\RU(\q)=\RD^{-\infty}_c(L;\CO_{(L\ltimes \CG_{\q})/\CG_{\l}})\]
and the assertion (b) follows immediately. The assertion (c) follows from \eqref{eq:pairing1}. 
\qed

\section{Modules of Lie pairs}\label{sec:repofGG}
In this section, we construct some modules of Lie pairs on various function spaces over formal manifolds introduced in \cite{CSW1} and \cite{CSW2}.

\subsection{Function spaces over formal manifolds}

Let $M$ be a formal manifold. 
In \cite{CSW1,CSW2}, various function spaces over $M$ are studied, including 
the space $\CO_M(M)$ of formal functions, 
the space $\RD^{-\infty}_c(M;\CO_M)$ of compactly supported formal distributions, 
the space $\RD^{\infty}_c(M;\CO_M)$ of compactly supported formal densities, 
and the space $\RC^{-\infty}(M;\CO_M)$ of formal generalized functions.
Equipped with their canonical topologies, these spaces are complete reflexive LCS 
(see \cite[Lemma A.10]{CSW2}). 
By \cite[Example 4.2]{CSW1} and \cite[Theorem 1.1]{CSW2}, these spaces generalize, respectively, 
the spaces of (complex-valued) smooth functions, compactly supported distributions, 
compactly supported smooth densities, and generalized functions on a smooth manifold.

For the spaces $\CO_M(M)$ and $\RD^{-\infty}_c(M;\CO_M)$, we refer the reader to Section~\ref{sec:comodofHopf}. 
The space
\[
\RC^{-\infty}(M;\CO_M) := \bigl(\RD^{\infty}_c(M;\CO_M)\bigr)'
\]
is equipped with the strong dual topology (see \cite[(3.1)]{CSW2}), 
where the LCS $\RD^{\infty}_c(M;\CO_M)$ is recalled in the remainder of this subsection.

Recall from  \cite[(2.3)]{CSW2} that as a vector space 
\[
\RD^{\infty}_c(M;\CO_M)
:= \rho_M\bigl(\mathrm{Diff}_{c}(\CO_M,\underline{\mathcal{D}_M})\bigr)
\subset \bigl(\CO_M(M)\bigr)' = \RD^{-\infty}_c(M;\CO_M).
\]
 Here $\rho_M$ is the linear map
\begin{equation}
\begin{split}
\rho_M:\ \mathrm{Diff}_{c}(\CO_M,\underline{\mathcal{D}_M})
&\longrightarrow \mathrm{Hom}_{\mathbb C}(\CO_M(M),\mathbb C),\\
D &\longmapsto \Bigl( f \mapsto \int_{M} D(f) \Bigr),
\end{split}
\label{eq:defcomsuppden}
\end{equation}
$\underline{\mathcal{D}_M}$ is the sheaf of all (complex-valued) smooth densities on the reduction 
$\underline{M}$, which is a $\CO_M$-module via the quotient homomorphism $\CO_M\to\underline{\CO_M}$, 
and $\mathrm{Diff}_{c}(\CO_M,\underline{\mathcal{D}_M})$ is the space of all compactly supported differential operators from $\CO_M$ 
to $\underline{\mathcal{D}_M}$ (see \cite[Section 3.2]{CSW1}).


We next recall the inductive limit topology on $\RD^{\infty}_c(M;\CO_M)$.
Equip the space $\mathrm{Diff}_{\mathrm{fin}} (\CO_M,\underline{\mathcal{D}_M})$ of  finite-order differential operators (see \cite[Definition 3.6]{CSW1}) with the topology given by the seminorms \[\{\abs{\,\cdot\,}_{X,f}\}_{X\in \mathrm{Diff}_c(\underline{\mathcal{D}_M},\underline{\CO_M}), f\in \CO_M(M)},\] where \[\abs{D}_{X,f}=\sup_{a\in M}|\big((X\circ D)(f)\big)(a)| \quad\text{for all $D\in \mathrm{Diff}_{\mathrm{fin}} (\CO_M,\underline{\mathcal{D}_M})$}.\] Then $\mathrm{Diff}_{\mathrm{fin}} (\CO_M,\underline{\mathcal{D}_M})$ becomes an LCS. Equip $\underline{\mathcal{D}_M}(M)$ with the smooth topology (see \cite[Definition 4.1]{CSW1}). Then it follows from \cite[Lemma 2.8]{CSW2}  that 
\be\label{eq:convergence}
\begin{aligned}
&\text{a net $\{D_i\}_{i\in I}$  in $\mathrm{Diff}_{\mathrm{fin}}(\CO_M,\underline{\mathcal{D}_M})$
 converges to $0$ if and only if }  \\ & \text{$\{D_i(f)\}_{i\in I}$ converges to $0$ in $\underline{\mathcal{D}_M}(M)$ for all $f\in \CO_M(M)$.}
\end{aligned}
\ee

There is a  directed set $(\mathcal{C}(M),\preceq)$, where
\be\label{eq:C(M)} \mathcal{C}(M):=\{(K,r)\mid \text{$K$ is a compact subset of $M$ and $r\in \BN$}\},\ee
and for two pairs $(K,r),(K',r')\in \mathcal{C}(M)$, 
\[\text{$(K,r)\preceq(K',r')$ if and only if $K\subset K'$ and $r\le r'$.}\] Equip \be \label{eq:D_ctop} \mathrm{D}^\infty_c(M;\CO_M)=\varinjlim_{(K,r)\in \mathcal{C}(M)}\mathrm{D}^\infty_{K,r}(M;\CO_M)\ee with the inductive limit topology. 
Here for each $(K,r)\in \mathcal{C}(M)$, \[ \mathrm{D}^\infty_{K,r}(M;\CO_M) \mid =\rho_M\bigl(\mathrm{Diff}_{K,r}(\CO_M,\underline{\mathcal{D}_M})\bigr), \]
and $\mathrm{Diff}_{K,r}(\CO_M,\underline{\mathcal{D}_M})$ denotes the subspace of $\mathrm{Diff}_{\mathrm{fin}}(\CO_M,\underline{\mathcal{D}_M})$ consisting of all  differential operators supported in $K$ and of order at most $r$.
The space $ \mathrm{Diff}_{K,r}(\CO_M,\underline{  \mathcal{D}_M}) $ is endowed with the subspace topology of $\mathrm{Diff}_{\mathrm{fin}}(\CO_M,\underline{\mathcal{D}_M})$, and the space $ \mathrm{D}^\infty_{K,r}(M;\CO_M) $ is equipped with the quotient topology of $\mathrm{Diff}_{K,r}(\CO_M,\underline{\mathcal{D}_M})$.

\subsection{Dual modules}\label{sec:dualrep} 
Recall that an LCS $E$ is called semi-reflexive if the canonical map $E\rightarrow (E')'$ is bijective.  
\begin{prpd}\label{prop:contraofgG}
   Let $(\q,L)$ be a Lie pair, and let $(\mu,E)$ be a $(\q,L)$-module.
Assume that $E$ is semi-reflexive and that $E'$ is quasi-complete. Then $E'$ together with the contragredient actions 
\be\label{eq:conofG} L \curvearrowright E', \quad (g,v)\mapsto g.v:=(u\mapsto \la v,g^{-1}.u\ra)\quad (u\in E),
\ee and 
\be\label{eq:conofg}  \q \curvearrowright E', \quad (\eta,v)\mapsto \eta.v:=(u\mapsto \la v,-\eta.u\ra)\ee
is a $(\q, L)$-module.
\end{prpd}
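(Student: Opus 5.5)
The plan is to check the requirements of Definition \ref{def:ofqLmod} one by one for $E'$ with its strong topology. The algebraic identities are immediate. The actions \eqref{eq:conofG} and \eqref{eq:conofg} are well defined, since $u\mapsto g^{-1}.u$ and $u\mapsto \eta.u$ are continuous linear maps of $E$, so their transposes preserve $E'$. They are a group action and a Lie algebra action: $(g_1g_2).v=g_1.(g_2.v)$, and $[\eta_1,\eta_2].v=\eta_1.(\eta_2.v)-\eta_2.(\eta_1.v)$ because of the sign in \eqref{eq:conofg}. For \eqref{eq:qLmodule1}, pair with $u\in E$:
\[
\la g.(\eta.(g^{-1}.v)),u\ra=-\la v, g.(\eta.(g^{-1}.u))\ra=-\la v,(\Ad_g\eta).u\ra=\la (\Ad_g\eta).v,u\ra .
\]
Quasi-completeness of $E'$ is assumed.

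For continuity, the $\q$-action is the easy part. Each $\eta$ acts by the transpose of a continuous operator, and transposes are continuous for the strong topologies, since a continuous map sends bounded sets to bounded sets. Because $\q$ is finite dimensional, joint continuity of $\q\times E'\to E'$ follows from bilinearity: write $\eta=\sum c_j\eta_j$ in a basis.

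The substantial part is the joint continuity of $L\times E'\to E'$ and the differentiability \eqref{eq:qLmodule2}. The key tool is equicontinuity. Since $E$ is quasi-complete, a bounded set $B\subset E$ has closed absolutely convex hull that is bounded and complete. For compact $K\subset L$ the set $K^{-1}.B$ is bounded, being the image of the compact... (more precisely, for a fixed continuous seminorm $p$, joint continuity on $K\times$ a bounded set gives boundedness). The step I expect to be the main obstacle is the convergence estimate
\[
\sup_{u\in B}|\la v, g^{-1}.u-u\ra|\to 0 \quad (g\to e).
\]
For fixed $v$, which is a continuous functional, I would get this from uniform convergence $g^{-1}.u\to u$ on $B$. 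That uniformity is not automatic, so I would use semi-reflexivity here. $E$ semi-reflexive means bounded sets in $E$ are relatively $\sigma(E,E')$-compact, and then for strong duals the topology of $E'$ is that of uniform convergence on weakly compact sets. On such sets I would use an Arzelà–Ascoli / Banach–Steinhaus argument: the family $\{g^{-1}\}_{g\in K}$ is equicontinuous on $E$ when restricted to bounded sets of the barreled-type situation. If that fails in general, I would instead argue that $E'$ strong equals the Mackey dual, which is barreled-like by semi-reflexivity. Then separate continuity of $L\times E'\to E'$, together with local equicontinuity of $\{g.\}_{g\in K}$ on $E'$, gives joint continuity by the standard $\epsilon/3$ argument. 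The equicontinuity on $E'$ itself follows because $\bigcup_{g\in K}g^{-1}.B$ is again bounded for bounded $B$.

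For \eqref{eq:qLmodule2}, write
\[
\Big\la \tfrac{\exp(t\tau).v-v}{t}-\iota(\tau).v,\ u\Big\ra=\Big\la v,\ \tfrac{\exp(-t\tau).u-u}{t}+\iota(\tau).u\Big\ra .
\]
I need this to tend to $0$ uniformly in $u\in B$. I would express the difference quotient minus the derivative as $\frac1t\int_0^t \exp(-s\tau).(\iota(\tau).u)\,ds-\iota(\tau).u$, using quasi-completeness of $E$ to define the weak integral. The question then reduces to the uniform continuity in $s$ of $s\mapsto \exp(-s\tau).w$ on the bounded set $w\in\iota(\tau).B$. This is the same uniform-convergence-on-bounded-sets issue as above, handled by the same equicontinuity argument.
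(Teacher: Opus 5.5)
There is a genuine gap, and it is the step you flag as the main obstacle. You never show that an individual orbit map $g\mapsto g.v$ is continuous from $L$ into $E'$ with its strong topology, i.e.\ that $\sup_{u\in B}|\la v,g^{-1}.u-u\ra|\to 0$ as $g\to e$ for each bounded $B\subset E$. What is immediate is only continuity for $\sigma(E',E)$.

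Your local equicontinuity of $\{g.\}_{g\in K}$ on $E'$, deduced from boundedness of $K^{-1}.B$, is correct. However, the $\epsilon/3$ argument needs strong continuity of each orbit as input. So joint continuity is left unproved, and so is \eqref{eq:qLmodule2}, which you reduce to the same fact.

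The tools you list do not supply the missing step.
\begin{itemize}
\item Equicontinuity of $\{g^{-1}\}_{g\in K}$ on $E$ gives uniform convergence $g^{-1}.u\to u$ only on precompact sets. Bounded subsets of a semi-reflexive space are merely relatively $\sigma(E,E')$-compact; for translations on $L^2(\BR)$ the convergence is not uniform on the unit ball.
\item Arzel\`a--Ascoli on a weakly compact set would need the functions $u\mapsto \la g.v,u\ra$, $g\in K$, to be $\sigma(E,E')$-equicontinuous there. That is essentially strong precompactness of the orbit of $v$, which is the very thing to be proved.
\item Barrelledness of $E'$ (which does follow from semi-reflexivity), or the equality of the strong and Mackey topologies on $E'$, only reproduces via Banach--Steinhaus the equicontinuity you already have.
\end{itemize}

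The paper does not prove this step by hand; it cites \cite[Corollary 4.1.2.3]{Ga}. A self-contained repair is the standard smoothing argument. For $f\in C_c(L)$ and a left Haar measure $dg$, put $\la v_f,u\ra:=\int_L f(g)\la v,g^{-1}.u\ra\,dg$.
\begin{itemize}
\item Equicontinuity of $\{g^{-1}\}_{g\in\mathrm{supp} f}$ on $E$ gives $v_f\in E'$.
\item Left invariance gives $h.v_f=v_{\lambda(h)f}$, where $(\lambda(h)f)(g)=f(h^{-1}g)$. Hence
\[
\sup_{u\in B}|\la h.v_f-v_f,u\ra|\le \|\lambda(h)f-f\|_{L^1}\,\sup_{w\in K^{-1}.B}|\la v,w\ra|\longrightarrow 0\quad (h\to e),
\]
with $K$ a compact set containing the supports of $f$ and of $\lambda(h)f$ for $h$ near $e$.
\item So the span $E'_0$ of all $v_f$ ($v\in E'$, $f\in C_c(L)$) consists of vectors with strongly continuous orbits.
\item An approximate identity gives $v_f\to v$ in $\sigma(E',E)$. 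Semi-reflexivity says exactly that $E$ is the dual of $E'$, so $E'_0$ is strongly dense.
\item Your local equicontinuity then extends orbit continuity to all of $E'$.
\end{itemize}

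Once this is in place, the rest of your proposal is sound. Your treatment of \eqref{eq:qLmodule2} is also more elementary than the paper's. The paper appeals to \cite[Theorem 4.5]{Ca} for smoothness of the contragredient action and then identifies the limit by pairing. In your route, the scalar fundamental theorem of calculus for $s\mapsto\la v,\exp(-s\tau).u\ra$ gives
\[
\sup_{u\in B}\Big|\Big\langle \tfrac{\exp(t\tau).v-v}{t}-\iota(\tau).v,\ u\Big\rangle\Big|\le \sup_{|s|\le|t|}\ \sup_{w\in\iota(\tau).B}|\la \exp(s\tau).v-v,w\ra| .
\]
So existence and identification of the strong limit follow together from orbit continuity at $e$. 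Quasi-completeness of $E'$ is then used only because Definition \ref{def:ofqLmod} requires it.
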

\begin{proof}
     The continuity of \eqref{eq:conofG} follows from the semi-reflexivity of $E$ (\cf \cite[Corollary 4.1.2.3]{Ga}). Since $\q$ is finite-dimensional, for every LCS $E_0$,
\be\label{eq:algconti}
\text{every separately continuous bilinear map $\q\times E_0\rightarrow E_0$ is continuous.}
\ee
Thus, the continuity of \eqref{eq:conofg} follows from its separate continuity. It remains to verify the two compatibility conditions
\eqref{eq:qLmodule1} and \eqref{eq:qLmodule2}.

Let $v\in E'$. For the first compatibility condition, it is clear  that \[ \la  g.(\eta.(g^{-1}.v)),u\ra =\la v, g.((-\eta).(g^{-1}.u))\ra =\la v, (-\Ad_g\eta).u\ra= \la(\Ad_g\eta).v,u\ra\] for all $g\in L$, $\eta\in\q$, and $u\in E$. 
Hence, 
$
g.(\eta.(g^{-1}.v))=(\Ad_g\eta).v$.

For the second one, \cite[Theorem 4.5]{Ca} implies that the continuous action
$L\curvearrowright E'$ is smooth.  Hence, for every $\tau\in\Lie(L)$, the limit
\[
\lim_{t\to 0}\frac{\exp(t\tau).v-v}{t}
\]
exists in $E'$. Note that for every $u\in E$, \begin{eqnarray*} \la\lim_{t\rightarrow 0}\frac{\exp(t\tau).v-v}{t},u\ra&=&\lim_{t\rightarrow 0}\la \frac{\exp(t\tau).v-v}{t},u \ra=\la v,\lim_{t\rightarrow 0}\frac{\exp(-t\tau).u-u}{t} \ra\\ &=&\la v,\iota(-\tau).u\ra=\la\iota(\tau).v,u\ra. \end{eqnarray*} Therefore, \[\lim_{t\to 0}\frac{\exp(t\tau).v-v}{t}=\iota(\tau).v\] and the proposition follows. \end{proof}

\subsection{Modules of $(\g,\underline{G})$}


We first fix some notation that will be used frequently throughout this paper.

Let $E_1$ and $E_2$ be two quasi-complete LCS. For  $\eta_1\in E_1'$ and $\eta_2\in E'_2$, let 
\be\label{eq:ra12} \la\eta_1, u\ra_1\qaq\la\eta_2,u\ra_2\ee denote the images of $u\in E_1\wt\otimes_{\pi} E_2$ under the continuous maps
\[E_1\wt\otimes_{\pi} E_2\xrightarrow{\eta_1\otimes \mathrm{id}_{E_2}}E_2\qaq E_1\wt\otimes_{\pi} E_2\xrightarrow{\mathrm{id}_{E_1}\otimes\eta_2 }E_1,\] respectively. 
Let $E_3$ be another quasi-complete LCS.  For every $\eta_3\in E_3'$, let
\be\label{eq:ra123} \la\eta_1\otimes\eta_2,v\ra_{1,2}\qaq\la\eta_2\otimes\eta_3,v\ra_{2,3} \ee denote 
the images of $v\in E_1\wt\otimes_{\pi} E_2\wt\otimes_{\pi} E_3$ under the continuous maps
\[E_1\wt\otimes_{\pi} E_2\wt\otimes_{\pi} E_3\xrightarrow{\eta_1\otimes\eta_2\otimes \mathrm{id}_{E_3}}E_3 \qaq E_1\wt\otimes_{\pi} E_2\wt\otimes_{\pi} E_3\xrightarrow{\mathrm{id}_{E_1}\otimes\eta_2\otimes\eta_3}E_1,\]
respectively. 


Let $G$ be a formal Lie group. Recall from Proposition \ref{prop:Gtopair} that $(\g,\underline{G})$ is a Lie pair. Let  $\psi=(\overline{\psi},\psi^*): \ G\times M\rightarrow M$ be a formal action of $G$ on a formal manifold $M$ (see \cite[Definition 4.1]{CSW4}).
Then 
\[\psi^*: \ \CO_M(M)\rightarrow \CO_{G\times M}(G\times M)=\CO_G(G)\wt\otimes_{\pi}\CO_M(M) \quad\text{(see \eqref{eq:O_3})}\] is a homomorphism  between formal algebras. 
\begin{prpd}\label{prop:modofgG} 

 \noindent(a) The map   \be\label{eq:LieGonOM} \g\times\CO_M(M)\rightarrow \CO_M(M), \quad (\eta, f)\mapsto \eta.f :=\la -\eta , \psi^*f\ra_1\ee is a continuous Lie algebra action,
	and the map \be\label{eq:GonOM}  \underline G\times\CO_M(M)\rightarrow \CO_M(M), \quad (g,f)\mapsto g.f:=\la \Ev_{g^{-1}}, \psi^*f\ra_1\ee is a continuous group action. Furthermore, together with \eqref{eq:LieGonOM} and \eqref{eq:GonOM}, the LCS $\CO_M(M)$ is a $(\g, \underline G)$-module.
    
 \noindent(b) The LCS $\RD^{-\infty}_c(M;\CO_M)$, together with 
 the contragredient actions of \eqref{eq:LieGonOM} and \eqref{eq:GonOM}, is a $(\g, \underline G)$-module. 
 
 \noindent(c) The subspace $\RD^{\infty}_c(M;\CO_M)$ of $\RD^{-\infty}_c(M;\CO_M)$ is $(\g, \underline G)$-stable. Furthermore, together with the restrictions of the contragredient actions
in (b),  the LCS $\RD^{\infty}_c(M;\CO_M)$ becomes a $(\g, \underline G)$-module.

  \noindent(d)
    The LCS $\RC^{-\infty}(M;\CO_M)$, together with the actions
contragredient to those on $\RD^{\infty}_c(M;\CO_M)$ in (c),  
    is a $(\g, \underline G)$-module. 

\end{prpd}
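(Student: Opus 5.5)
The plan is to prove (a) by extracting all structure from the action axioms of $\psi$ through the coalgebra $\RD^{-\infty}_c(G;\CO_G)$, and then to obtain (b) and (d) from Proposition \ref{prop:contraofgG}, with (c) handled by a direct stability and topology argument.

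\textbf{Step 1: a module over the distribution algebra.} For $\xi\in \RD^{-\infty}_c(G;\CO_G)$ and $f\in\CO_M(M)$, set $\xi\ast f:=\la \xi,\psi^*f\ra_1$. The associativity diagram of the formal action $\psi$ reads $(\mathrm{id}\times\psi)^*\circ\psi^*=(m\times\mathrm{id})^*\circ\psi^*$. Pairing both sides with $\xi_1\otimes\xi_2$ via $\la\cdot,\cdot\ra_{1,2}$ and using the definition \eqref{eq:eta12} of the product gives
\[
\xi_1\ast(\xi_2\ast f)=(\xi_1\xi_2)\ast f .
\]
The unit axiom gives $\Ev_e\ast f=f$.

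\textbf{Step 2: the Lie algebra and group actions.} Restricting Step 1 to $\g\subset\RU(\g)\subset\RD^{-\infty}_c(G;\CO_G)$ gives a representation of the associative algebra $\RU(\g)^{\mathrm{op}}$. The sign in \eqref{eq:LieGonOM} turns this into a left Lie algebra action: for primitive $\eta_1,\eta_2$,
\[
\eta_1.(\eta_2.f)-\eta_2.(\eta_1.f)=[\eta_2,\eta_1]\ast f=[\eta_1,\eta_2].f .
\]
Restricting Step 1 to group-like elements, with $\Ev_{g^{-1}}$, gives a group action, since $\Ev_{g^{-1}}\Ev_{h^{-1}}=\Ev_{(hg)^{-1}}$ by Corollary \ref{cor:Gembed}.

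\textbf{Step 3: continuity.} For each fixed $\xi$, the map $f\mapsto \xi\ast f$ is continuous, being a composition of continuous maps. The induced bilinear map $\RD^{-\infty}_c(G;\CO_G)\times\CO_M(M)\to\CO_M(M)$ is separately continuous. It should in fact be hypocontinuous, because $\CO_M(M)$ and $\RD^{-\infty}_c(G;\CO_G)$ are complete reflexive, hence barreled, spaces.
\begin{itemize}
\item Continuity of the $\g$-action then follows from \eqref{eq:algconti}.
\item For the $\underline G$-action, joint continuity on $\underline G\times\CO_M(M)$ follows from three facts: hypocontinuity, the Dirac embedding of Proposition \ref{prop:dirac} (so $g\mapsto\Ev_{g^{-1}}$ is continuous), and local compactness of $\underline G$ (so compact sets of group elements give compact, hence bounded, sets of $\Ev_g$).
\end{itemize}
I expect this hypocontinuity point to be the main technical obstacle. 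What I would try first is to invoke the identification $\CO_G(G)\wt\otimes_\pi\CO_M(M)=\CO_{G\times M}(G\times M)$ and the fact that $\RD^{-\infty}_c(G;\CO_G)$ carries the topology of uniform convergence on bounded sets, which yields equicontinuity of $\{\la\xi,\cdot\ra_1\}_{\xi\in B}$ for bounded $B$.

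\textbf{Step 4: the compatibility conditions \eqref{eq:qLmodule1} and \eqref{eq:qLmodule2}.}
\begin{itemize}
\item \eqref{eq:qLmodule1} follows from Step 1 together with Corollary \ref{cor:realAd}:
\[
\Ev_{g^{-1}}\,\eta\,\Ev_{g}=\Ad_{g^{-1}}\eta .
\]
Reading the left side right to left matches $g.(\eta.(g^{-1}.f))$, and this equals $(\Ad_g\eta).f$.
\item For \eqref{eq:qLmodule2}, take $\tau\in\Lie(\underline G)$. In $\RD^{-\infty}_c(G;\CO_G)$ we have
\[
\lim_{t\to0}\frac{\Ev_{\exp(-t\tau)}-\Ev_e}{t}=-d\iota_e(\tau).
\]
This holds because, via ${}^t\iota^*$, it reduces to the corresponding statement for compactly supported distributions on the Lie group $\underline G$, where it is classical (the limit converges uniformly on bounded sets of smooth functions). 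Applying the hypocontinuous pairing of Step 3 to this convergent family then gives \eqref{eq:qLmodule2}.
\end{itemize}

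\textbf{Step 5: parts (b) and (d).} Part (b) follows from Proposition \ref{prop:contraofgG}, since $\CO_M(M)$ is reflexive and $\RD^{-\infty}_c(M;\CO_M)$ is complete. Part (d) likewise follows from (c) and Proposition \ref{prop:contraofgG}, using reflexivity of $\RD^\infty_c(M;\CO_M)$ and completeness of $\RC^{-\infty}(M;\CO_M)$.

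\textbf{Step 6: part (c).} For stability, represent elements of $\RD^\infty_c(M;\CO_M)$ by $\rho_M(D)$ with $D$ a compactly supported differential operator. The transposed action is $f\mapsto\int D(\xi\ast f)$, and this can be rewritten as $\rho_M(D')$ for another compactly supported finite-order operator $D'$. For $\xi\in\g$ one uses that $f\mapsto\eta.f$ is a differential operator (a vector field on $M$). For $\xi=\Ev_g$ one uses that $f\mapsto g.f$ is the pullback along the automorphism $\psi(g,\cdot)$ of $M$.

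For continuity, on each step $\mathrm{D}^\infty_{K,r}$ of the inductive limit \eqref{eq:D_ctop}, a compact neighbourhood of $g$ moves $K$ into a fixed compact set without raising the order. Using \eqref{eq:convergence}, continuity then reduces to continuity of the smooth-topology maps already proved in (a). Condition \eqref{eq:qLmodule2} is inherited from (b) because the inclusion $\RD^\infty_c(M;\CO_M)\hookrightarrow\RD^{-\infty}_c(M;\CO_M)$ is continuous and injective. This still requires care: one needs the difference quotients to converge in the finer inductive limit topology, not just in $\RD^{-\infty}_c(M;\CO_M)$. To get this, I would show they stay in a single $\mathrm{D}^\infty_{K,r}$ and converge there.
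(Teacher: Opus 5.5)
Your overall architecture is the paper's: (b) and (d) come from Proposition \ref{prop:contraofgG}, (a) from the action axiom, Corollary \ref{cor:realAd} and difference quotients of Dirac distributions, and (c) from rewriting $\eta.\rho_M(D)$ and $g.\rho_M(D)$ as $\rho_M$ of transported operators. Your Step 3 is a legitimate alternative to the direct $\sup_K|f|$ estimate in Lemma \ref{lem:GcontinuonOM}: bounded subsets of $\RD^{-\infty}_c(G;\CO_G)$ are equicontinuous because $\CO_G(G)$ is barreled, and compact subsets of $\underline G$ have bounded image under the Dirac embedding.

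There is one bookkeeping slip. The action axiom gives $\xi_2\ast(\xi_1\ast f)=(\xi_1\xi_2)\ast f$, i.e.\ a right module. That is what your Step 2 actually uses, but it is not the identity displayed in Step 1. With the correct rule, $g.(\eta.(g^{-1}.f))=-(\Ev_g\eta\Ev_{g^{-1}})\ast f=(\Ad_g\eta).f$. Your word $\Ev_{g^{-1}}\eta\Ev_g$ instead computes $g^{-1}.(\eta.(g.f))$. This is harmless after replacing $g$ by $g^{-1}$, but Steps 1 and 4 as written contradict each other.

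The genuine gap is the joint continuity of the $\underline G$-action on $\RD^\infty_c(M;\CO_M)$ in Step 6. Your sketch keeps supports in a fixed compact set and then applies \eqref{eq:convergence} and (a). That establishes only that each orbit map $g\mapsto g.\rho_M(D)$, with $D$ fixed, is continuous; this is exactly the claim proved inside Lemma \ref{lem:gonDc}. The criterion \eqref{eq:convergence} is pointwise. If the operator also moves along a net, you must control $D_\alpha(g_\alpha^{-1}.f)$ with both $D_\alpha$ and its argument varying. That needs an equicontinuity which convergence in $\mathrm{Diff}_{K,r}(\CO_M,\underline{\mathcal{D}_M})$ does not provide. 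So the passage from orbit continuity to continuity of $\underline G\times\RD^\infty_c(M;\CO_M)\to\RD^\infty_c(M;\CO_M)$, for the locally convex inductive limit topology, is missing. The needed input is that $\RD^\infty_c(M;\CO_M)$ is barreled, being complete and reflexive. Each single operator $\tau\mapsto g.\tau$ is continuous, which does follow stepwise. Then continuity of the orbit maps gives, by Banach--Steinhaus, equicontinuity of $\{\tau\mapsto g.\tau\}_{g\in N}$ for compact $N\subset\underline G$, and hence joint continuity. This is the criterion \eqref{eq:barreled} that the paper relies on. You use the same mechanism for $\CO_M(M)$ in Step 3 but never invoke it for $\RD^\infty_c(M;\CO_M)$.

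Relatedly, for \eqref{eq:qLmodule2} in (c), your plan is sound: prove convergence inside one step and identify the limit through the continuous injection into $\RD^{-\infty}_c(M;\CO_M)$. This even spares you the identity $\rho_M(\eta\circ D)=0$. However, convergence in a step cannot be read off from convergence in the coarser space; you must exhibit the limit. Concretely, show that $t^{-1}\bigl(\exp(t\eta)\circ D\circ\exp(-t\eta)-D\bigr)\to\eta\circ D+D\circ D_{-\eta}$ in some $\mathrm{Diff}_{K_0,r+1}(\CO_M,\underline{\mathcal{D}_M})$, using the splitting \eqref{eq:gD0}. Before that, check via the Leibniz rule $\eta.(h\tau)=(\eta.h)\tau+h(\eta.\tau)$ that $\eta\circ D$ is again a differential operator.
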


The remainder of this subsection is devoted to the proof of Proposition \ref{prop:modofgG}. 
In view of Proposition \ref{prop:contraofgG}, assertions (b) and (d) follow respectively from assertions (a) and (c). 
It remains to prove assertions (a) and (c).

\begin{lemd}\label{lem:GcontinuonOM}
    The maps \eqref{eq:LieGonOM} and \eqref{eq:GonOM} are continuous. 
\end{lemd}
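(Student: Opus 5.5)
Both maps factor through the continuous linear map $\psi^*:\CO_M(M)\to \CO_G(G)\wt\otimes_{\pi}\CO_M(M)$ followed by a partial evaluation $\la\,\cdot\,,\,\cdot\,\ra_1$. So the plan is to study the bilinear map
\[
\Phi:\ \RD^{-\infty}_c(G;\CO_G)\times \bigl(\CO_G(G)\wt\otimes_{\pi}\CO_M(M)\bigr)\rightarrow \CO_M(M),\qquad (\eta,u)\mapsto \la \eta,u\ra_1,
\]
and then compose it with $\psi^*$ in the second variable and with $\eta\mapsto -\eta$ (on $\g$) or $g\mapsto \Ev_{g^{-1}}$ (on $\underline G$) in the first variable.

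For \eqref{eq:LieGonOM} the argument is easy. For fixed $\eta$, the map $\eta\otimes\mathrm{id}$ is continuous on the quasi-completed projective tensor product, so $f\mapsto \eta.f$ is continuous. For fixed $f$, the map $\eta\mapsto \eta.f$ is linear on the finite-dimensional space $\g$, hence continuous. By \eqref{eq:algconti}, separate continuity gives joint continuity.

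For \eqref{eq:GonOM} I would factor the map as
\[
\underline G\times \CO_M(M)\xrightarrow{\ (g,f)\mapsto (\Ev_{g^{-1}},\psi^*f)\ } \RD^{-\infty}_c(G;\CO_G)\times\bigl(\CO_G(G)\wt\otimes_{\pi}\CO_M(M)\bigr)\xrightarrow{\ \Phi\ }\CO_M(M).
\]
The first map is continuous by Proposition \ref{prop:dirac} (or Corollary \ref{cor:Gembed}), continuity of the inversion of $\underline G$, and continuity of $\psi^*$. The work is then to show that $\Phi$ is continuous on $B\times(\CO_G(G)\wt\otimes_{\pi}\CO_M(M))$ for every compact, or merely bounded, subset $B$ of $\RD^{-\infty}_c(G;\CO_G)$. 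It suffices to use $B=\{\Ev_{h}: h\in K\}$, where $K$ is a compact neighbourhood of $g^{-1}$ in $G$; this set is compact by Proposition \ref{prop:dirac}. The main tool is the identification
\[
\CO_G(G)\wt\otimes_{\pi}\CO_M(M)=\CO_{G\times M}(G\times M)
\]
from \eqref{eq:O_3}. Under it, $\la \Ev_h,u\ra_1$ is the restriction (pullback) of the formal function $u$ along the morphism $M=\{h\}\times M\hookrightarrow G\times M$.

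Continuity would then be proved in two steps, working locally by \cite[Lemma 5.3]{CSW2} so that $G$ and $M$ are replaced by charts $(\BR^n)^{(k)}$ and $(\BR^{n'})^{(k')}$.

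\textbf{Step 1 (equicontinuity).} The family $\{u\mapsto \la \Ev_h,u\ra_1\}_{h\in K}$ is equicontinuous. This holds because the smooth-topology seminorms of the restriction to $\{h\}\times M$ on a compact set $K'\subset M$ are bounded by the seminorms of $u$ on $K\times K'$.

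\textbf{Step 2 (continuity in $h$).} For fixed $u$, the map $h\mapsto \la\Ev_h,u\ra_1$ is continuous into $\CO_M(M)$. In coordinates, the coefficient functions of $u$ (in the formal variables of both factors) are smooth on $\BR^n\times\BR^{n'}$. Uniform continuity of all their derivatives on compact sets then gives convergence of all smooth seminorms as $h\to h_0$, which is the same mean value argument as in the proof of Proposition \ref{prop:dirac}.

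Equicontinuity together with pointwise continuity gives joint continuity on $K\times(\CO_G(G)\wt\otimes_{\pi}\CO_M(M))$ by the standard $\epsilon/3$ argument.

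I expect the main obstacle to be Step 1 together with the identification it relies on: one must make precise that the seminorms of the restriction to a slice $\{h\}\times M$ are controlled uniformly in $h\in K$, which requires the explicit description of the smooth topology of \cite[Definition 4.1]{CSW1} on $(\BR^n)^{(k)}\times(\BR^{n'})^{(k')}$. As a fallback, if $\CO_M(M)$ is barreled (it is complete and reflexive, and Fréchet when $M$ is second countable), the Banach--Steinhaus theorem gives Step 1 directly from the pointwise boundedness obtained in Step 2.
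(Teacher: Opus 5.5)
Your proposal is correct and follows the same outline as the paper. The $\g$-action is handled by separate continuity and \eqref{eq:algconti}. The $\underline G$-action is reduced, through $\mathrm{id}\times\psi^*$, to continuity of the partial evaluation $(g,u)\mapsto\la\Ev_{g^{-1}},u\ra_1$ on $G\times(\CO_G(G)\wt\otimes_{\pi}\CO_M(M))$. That in turn comes from a uniform estimate over a compact neighbourhood $K$ together with continuity in $g$ for fixed $u$.

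The difference is in how you obtain the uniform estimate. The paper proves only the scalar version, $|f(h)|\le p_K(f):=\sup_{x\in K}|f(x)|$ for $h\in K$, which makes $G\times\CO_G(G)\to\BC$ continuous, and treats the passage to the tensor product as routine. You instead identify the tensor product with $\CO_{G\times M}(G\times M)$, read $\la\Ev_h,\cdot\ra_1$ as restriction to the slice $\{h\}\times M$, and estimate in charts. The slice reading is legitimate, since both maps are continuous and agree on elementary tensors, so your route works.

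The step you flag as the main obstacle can be avoided, however. For $u=\sum_i f_i\otimes v_i$ and a continuous seminorm $q$ on $\CO_M(M)$,
\[
q(\la\Ev_h,u\ra_1)\le\sum_i p_K(f_i)\,q(v_i),
\]
hence $q(\la\Ev_h,u\ra_1)\le(p_K\otimes_{\pi}q)(u)$ for all $h\in K$, and this inequality persists on the quasi-completion. That is your Step 1 with no localization. Step 2 then follows without coordinates as well: $h\mapsto\sum_i f_i(h)v_i$ is continuous, and the estimate exhibits $h\mapsto\la\Ev_h,u\ra_1$ as a uniform limit on $K$ of such maps.

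The tensor-seminorm route uses nothing about $\CO_M(M)$ beyond its being an LCS. Your route gives, in addition, an explicit chart description of $g.f$.

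One small correction to your fallback: Banach--Steinhaus has to be applied on the domain of the family of maps. That domain is $\CO_{G\times M}(G\times M)$, or $\CO_M(M)$ if you use the family $f\mapsto g.f$ for $g\in K$, which amounts to the criterion \eqref{eq:barreled}. You named $\CO_M(M)$ while working with the family defined on the tensor product. Both spaces are reflexive, hence barreled, so the fallback is valid once the right space is named.
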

\begin{proof}
 Using \eqref{eq:algconti}, it is clear that the map \eqref{eq:LieGonOM} is continuous.   Note that  the map \eqref{eq:GonOM} is the composition of the continuous map
    \[G\times \CO_M(M)\xrightarrow{\mathrm{id}_{G} \times \psi^*} G\times  (\CO_G(G)\wt\otimes_{\pi} \CO_M(M))\]
    and the map 
    \be \label{eq:wttimes2}G\times (\CO_G(G)\wt\otimes_{\pi}\CO_M(M) )\rightarrow \CO_M(M),\quad (g, f\otimes u)\mapsto f(g^{-1})u.\ee 
     Therefore, it suffices to prove that \eqref{eq:wttimes2} is continuous.
    
 Let $U$ be a relatively compact open subset of $G$ and let $K$ be the closure of $U$ in $G$. Then for each $\varepsilon>0$, we have that   
     \[|f(g)|<\varepsilon,\quad  \text{for each $g\in U$ and each $f\in \CO_G(G)$ with $\sup_{g\in K}|f(g)|<\varepsilon$}.\] 
This implies that the map \be\label{eq:GO(G)} G\times \CO_G(G)\rightarrow \BC,\quad  (g,f)\mapsto f(g)\ee  is continuous. Then it is straightforward that the map \eqref{eq:wttimes2} is continuous. This finishes the proof.
\end{proof}
 

\noindent\textbf{Proof of Proposition \ref{prop:modofgG} (a):} 
By the definition of formal actions, it is clear that the continuous map \eqref{eq:LieGonOM} is a Lie algebra action and the continuous map \eqref{eq:GonOM} is a Lie group action. It suffices to prove the second assertion in (a).

 For every  $f\in \CO_G(G)$,  it is clear that
\[
\begin{aligned}
\la \Ad_g(\eta),f\ra
&=\la \Ev_g\eta\Ev_{g^{-1}},f\ra \quad \text{(see Corollary \ref{cor:realAd})}\\
&=\la \Ev_g\otimes\eta\otimes\Ev_{g^{-1}},
(m\times\mathrm{id}_G)^*m^*(f)\ra,
\end{aligned}\]
for each $g\in G$ and $\eta\in\g$,
and 
\[\la\tau,f \ra=\lim_{t\rightarrow 0} \frac{f(\exp{(t\tau)})-f(e)}{t}=\lim_{t\rightarrow 0}\la \frac{\Ev_{\exp(t\tau)}-\Ev_e}{t},f\ra \quad\text{for each $\tau\in \Lie(\underline{G})$}.\]
Using these, it is easy to verify the two compatibility conditions
\eqref{eq:qLmodule1} and \eqref{eq:qLmodule2}. Then 
$\CO_M(M)$ is a $(\g,\underline{G})$-module, as desired.
\qed
\vspace{3mm}

We now prove the assertion (c) in Proposition \ref{prop:modofgG}.
 Recall from \cite[Definition 3.2]{CSW1} that for a commutative unital $\BC$-algebra $A$, an element $D\in \Hom_{\BC} (A,A)$ is a derivation if
\[D(b_1b_2)=D(b_1)b_2+b_1D(b_2)\quad \text{for all $b_1,b_2\in A$.}\]  By using \eqref{eq:Prim}, the following lemma is straightforward.
\begin{lemd}\label{lem:etader}
	  For each $\eta\in \g$, the map
	  \[D_{\eta}:\ \CO_M(M)\rightarrow \CO_M(M), \quad  f\mapsto\eta.f\]
	 is a derivation. 
\end{lemd}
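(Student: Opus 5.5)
The plan is to combine two facts: $\psi^*$ is an algebra homomorphism, and $\eta$ is primitive (see \eqref{eq:Prim}). Fix $\eta\in\g$ and $f_1,f_2\in\CO_M(M)$. Because $\psi^*:\CO_M(M)\to\CO_G(G)\wt\otimes_\pi\CO_M(M)$ is a homomorphism of formal algebras,
\[
\psi^*(f_1f_2)=\psi^*(f_1)\,\psi^*(f_2),
\]
where the product on the right is taken in the formal algebra $\CO_{G\times M}(G\times M)=\CO_G(G)\wt\otimes_\pi\CO_M(M)$. That product is the tensor product of the multiplications of the two factors, so it is continuous.

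The key step is a partial Leibniz identity for the contraction. For $\xi\in\RD^{-\infty}_c(G;\CO_G)$ and $F_1,F_2\in\CO_G(G)\wt\otimes_\pi\CO_M(M)$, write $\Delta\xi=\sum_j\xi_j^{[1]}\otimes\xi_j^{[2]}$; when $\xi$ is primitive this is a finite sum. The claim is
\[
\la \xi, F_1F_2\ra_1=\sum_j \la \xi_j^{[1]},F_1\ra_1\,\la\xi_j^{[2]},F_2\ra_1 .
\]
I would check this first on elementary tensors $F_i=a_i\otimes u_i$. There the identity reads $\la\xi,a_1a_2\ra u_1u_2=\sum_j\la\xi^{[1]}_j,a_1\ra\la\xi^{[2]}_j,a_2\ra u_1u_2$, which is exactly the definition of the comultiplication as the transpose of the multiplication \eqref{eq:mulofOM}. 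The general case then follows by density of the algebraic tensor product in $\CO_G(G)\wt\otimes_\pi\CO_M(M)$. For this I need both sides to be separately continuous in $F_1$ and in $F_2$. That holds because multiplication is continuous and each contraction $\la\xi',\cdot\ra_1=\xi'\otimes\mathrm{id}$ is continuous.

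Now apply this with $\xi=-\eta$, so that $\Delta(-\eta)=(-\eta)\otimes\Ev_e+\Ev_e\otimes(-\eta)$. We get
\[
\eta.(f_1f_2)=\la-\eta,\psi^*f_1\ra_1\,\la\Ev_e,\psi^*f_2\ra_1+\la\Ev_e,\psi^*f_1\ra_1\,\la-\eta,\psi^*f_2\ra_1 .
\]
To finish, I need $\la\Ev_e,\psi^*f\ra_1=f$ for every $f$. This is the unit axiom of the formal action: the composition $\Spec(\BC)\times M\xrightarrow{\varepsilon\times\mathrm{id}}G\times M\xrightarrow{\psi}M$ is the identity, and on functions it gives $(\varepsilon^*\otimes\mathrm{id})\circ\psi^*=\mathrm{id}$. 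Since $\varepsilon^*=\Ev_e$, this is the required identity, and hence $D_\eta(f_1f_2)=D_\eta(f_1)f_2+f_1D_\eta(f_2)$.

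The only real difficulty is the density and continuity argument that extends the Leibniz identity from elementary tensors to the quasi-completed projective tensor product. It is routine, since every map involved is continuous. Linearity of $D_\eta$ is clear.
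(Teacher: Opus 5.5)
Your proof is correct and takes the same approach as the paper, which only says the lemma is straightforward from the primitivity condition \eqref{eq:Prim}. You make that explicit by combining primitivity with the multiplicativity of $\psi^*$ and the unit axiom $(\varepsilon^*\otimes\mathrm{id})\circ\psi^*=\mathrm{id}$, and your density and continuity argument for extending the Leibniz identity from elementary tensors is a sound way to make the ``straightforward'' step rigorous.
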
 


\begin{lemd}\label{lem:GOMsm}
	    The subspace $\RD^{\infty}_c(M;\CO_M)$ of $\RD^{-\infty}_c(M;\CO_M)$ is $\g$-stable. Moreover, the Lie algebra action 
\be\label{eq:etaonDc} \g\times \RD^{\infty}_c(M;\CO_M) \rightarrow \RD^{\infty}_c(M;\CO_M), \quad (\eta,\tau)\mapsto \eta.\tau\ee (as in Proposition \ref{prop:modofgG} (c)) is continuous.
\end{lemd}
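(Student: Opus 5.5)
The plan is to identify the contragredient action of $\eta\in\g$ on a formal density with an explicit composition of differential operators. Then $\g$-stability reduces to the fact that $\mathrm{Diff}_c(\CO_M,\underline{\mathcal{D}_M})$ is stable under right composition with derivations. Continuity will then come from the inductive limit description \eqref{eq:D_ctop}.

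\textbf{Step 1 (the action as a composition).} Take $\tau=\rho_M(D)\in \RD^{\infty}_c(M;\CO_M)$ with $D\in \mathrm{Diff}_{K,r}(\CO_M,\underline{\mathcal{D}_M})$. By definition of the contragredient action in Proposition \ref{prop:modofgG} (b),
\[
\langle \eta.\tau, f\rangle=\langle \tau,-\eta.f\rangle=-\int_M D(D_\eta f)\qquad (f\in\CO_M(M)).
\]
By Lemma \ref{lem:etader}, $D_\eta$ is a derivation of $\CO_M(M)$. I would first localize $D_\eta$ to a sheaf endomorphism of $\CO_M$ that is a derivation on each $\CO_M(U)$. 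One way is to use the standard fact that derivations of $\CO_M(M)$ are local, together with bump functions. Alternatively, the formula $\eta.f=\la-\eta,\psi^*f\ra_1$ defines such an operator directly on every open subset $U$, because $\psi^*$ is a sheaf morphism.

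A derivation of $\CO_M$ is a differential operator of order $\le 1$ in the sense of \cite[Section 3.2]{CSW1}. Hence $D\circ D_\eta$ is a differential operator from $\CO_M$ to $\underline{\mathcal{D}_M}$. It is supported in $K$ and has order at most $r+1$. Therefore
\[
\eta.\tau=\rho_M(-D\circ D_\eta)\in \RD^{\infty}_{K,r+1}(M;\CO_M),
\]
which gives $\g$-stability. The bilinear map \eqref{eq:etaonDc} is linear in $\eta$, so by \eqref{eq:algconti} it suffices to show that $\tau\mapsto\eta.\tau$ is continuous for each fixed $\eta$.

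\textbf{Step 2 (continuity).} Because $\RD^{\infty}_c(M;\CO_M)$ carries the inductive limit topology, it suffices to check continuity of each restriction
\[
\RD^{\infty}_{K,r}(M;\CO_M)\to \RD^{\infty}_{K,r+1}(M;\CO_M)\hookrightarrow \RD^{\infty}_c(M;\CO_M).
\]
Each $\RD^{\infty}_{K,r}$ carries the quotient topology, so this reduces to continuity of
\[
\mathrm{Diff}_{K,r}(\CO_M,\underline{\mathcal{D}_M})\to \mathrm{Diff}_{K,r+1}(\CO_M,\underline{\mathcal{D}_M}),\qquad D\mapsto D\circ D_\eta .
\]
This map is compatible with $\rho_M$, since $\rho_M(D\circ D_\eta)$ is determined by $\rho_M(D)$ through the formula in Step 1. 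Continuity is immediate from the defining seminorms: $\abs{D\circ D_\eta}_{X,f}=\abs{D}_{X,D_\eta f}$. Alternatively one can use \eqref{eq:convergence}: if $D_i\to 0$, then $D_i(D_\eta f)\to0$ for every $f$.

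\textbf{Main obstacle.} The delicate point is Step 1, namely justifying that $D_\eta$ is a genuine differential operator on the sheaf $\CO_M$ of order $\le1$, rather than just a derivation of global sections. Composition with it must also preserve the support condition and the finite order used in $\mathrm{Diff}_{K,r}$. I would handle this by working locally in charts $(\R^n)^{(k)}$. There $\eta.f$ is computed from $\psi^*$ restricted to $G\times U$, and a derivation of $\RC^\infty(U)[[y_1,\dots,y_k]]$ is a combination of $\partial_{x_i}$ and $\partial_{y_j}$ with coefficients in $\CO_M(U)$. This description identifies its order as $\le1$ in the sense of \cite[Definition 3.6]{CSW1}.
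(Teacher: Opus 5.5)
Your proposal is correct and follows essentially the same route as the paper. The paper also shows $\eta.\rho_M(D)=\rho_M(D\circ D_{-\eta})$ with $D\circ D_{-\eta}\in\mathrm{Diff}_{K,r+1}(\CO_M,\underline{\mathcal{D}_M})$, then derives continuity from the identity $\abs{D\circ D_{-\eta}}_{X,f}=\abs{D}_{X,-\eta.f}$, the inductive limit topology, and \eqref{eq:algconti}. The point you flag as the main obstacle is handled in the paper simply by citing Lemma \ref{lem:etader} together with \cite[Proposition 3.8]{CSW1}, so your chart-level analysis of derivations is harmless but not needed.
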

\begin{proof}
Let $\eta\in \g$. 
For all $D\in \mathrm{Diff}_{K,r} (\CO_M,\underline{\mathcal{D}_M})$ with $(K,r)\in \mathcal{C}(M)$, it follows from  Lemma \ref{lem:etader} and \cite[Proposition 3.8]{CSW1} that $D\circ D_{-\eta}\in \mathrm{Diff}_{K,r+1}(\CO_M,\underline{\mathcal{D}_{M}})$. 
Then one has that \[
\begin{aligned}
\langle \eta.\rho_M(D),f \rangle
&= \langle \rho_M(D), -\eta. f \rangle
 = \langle \rho_M(D), D_{-\eta}(f) \rangle \\
&= \int_M (D\circ D_{-\eta})(f)
 = \langle \rho_M(D\circ D_{-\eta}), f \rangle 
\end{aligned}
\] for every $f\in \CO_M(M)$. 
This implies that  \be\label{eq:etaonD} \eta.\rho_M(D)=\rho_M(D\circ D_{-\eta})\in \RD^{\infty}_c(M;\CO_M),\ee and then the first assertion follows.

We now prove the second assertion. For all $D\in \mathrm{Diff}_{K,r} (\CO_M,\underline{\mathcal{D}_M})$ with $(K,r)\in \mathcal{C}(M)$, it is clear that 
\[
\abs{D\circ D_{-\eta}}_{X,f}=\sup_{a\in M}|\big((X\circ D)(-\eta.f)\big)(a)|=\abs{D}_{X,-\eta.f}
\]
for each $X\in \mathrm{Diff}_c(\underline{\mathcal{D}_M},\underline{\CO_M})$ and each $f\in \CO_M(M)$.  
This implies that the linear map
\[
\mathrm{Diff}_{K,r}(\CO_M,\underline{\mathcal{D}_M})
\rightarrow
\mathrm{Diff}_{K,r+1}(\CO_M,\underline{\mathcal{D}_M}),
\quad
D\mapsto D\circ D_{-\eta},
\]
is continuous. Hence, by \eqref{eq:etaonD},
\[
\RD^{\infty}_{K,r}(M;\CO_M)
\rightarrow
\RD^{\infty}_{K,r+1}(M;\CO_M),
\quad
\tau\mapsto\eta.\tau,
\]
is continuous. It then follows from \eqref{eq:D_ctop} that
\[
\RD^{\infty}_c(M;\CO_M)
\rightarrow
\RD^{\infty}_c(M;\CO_M),
\quad
\tau\mapsto\eta.\tau,
\]
is continuous. Together with \eqref{eq:algconti}, this proves the second assertion.
\end{proof}

Recall from \cite[Section 4.1]{CSW2} that the reduction 
\[\underline{\psi}:\ \underline{G}\times\underline{M}\rightarrow \underline{M}\] is a smooth action of the Lie group $\underline{G}$ on the smooth manifold $\underline{M}$. Then it is well-known that there is a smooth representation
\be \label{eq:Gondensity}\underline{G}\times \RD^{\infty}_c(M;\underline{\CO_M})\rightarrow \RD^{\infty}_c(M;\underline{\CO_M}), \quad (g,\tau)\mapsto g.\tau\ee of $\underline G$. Here, for each $g\in G$ and $\tau\in \RD^{\infty}_c(M;\underline{\CO_M})$, $g.\tau$ is the compactly supported smooth density such that 
\[\la g.\tau,f_0\ra=\la\tau,g^{-1}.f_0\ra \quad \text{(see \eqref{eq:GonOM})}\] for every $f_0\in \underline{\CO_M}(M)$.
In view of this, 
for each $g\in G$ and each $D 
\in\mathrm{Diff}_c(\CO_M,\underline{\mathcal{D}_M})$, write $g\circ D\circ g^{-1}$ for the linear map  \[ \CO_M(M)\rightarrow \underline{\mathcal{D}_M}(M), \quad f\mapsto  g.(D(g^{-1}.f)).\]

\begin{lemd}\label{lem:GOMsm1}
	    The subspace $\RD^{\infty}_c(M;\CO_M)$ of $\RD^{-\infty}_c(M;\CO_M)$ is $\underline{ G}$-stable. 
\end{lemd}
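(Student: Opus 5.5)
The plan is to mimic the proof of Lemma \ref{lem:GOMsm}: for $g\in\underline G$ and $D\in\mathrm{Diff}_{K,r}(\CO_M,\underline{\mathcal{D}_M})$, show that $g\circ D\circ g^{-1}$ is again a compactly supported differential operator, and that
\[
g.\rho_M(D)=\rho_M(g\circ D\circ g^{-1}).
\]
The identity is formal. For $f\in\CO_M(M)$, the contragredient action gives $\la g.\rho_M(D),f\ra=\la\rho_M(D),g^{-1}.f\ra=\int_M D(g^{-1}.f)$. The pushforward of smooth densities along the diffeomorphism $\underline\psi(g,\cdot)$ of $\underline M$ preserves integrals, so $\int_M D(g^{-1}.f)=\int_M g.(D(g^{-1}.f))$. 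Note also that the density action \eqref{eq:Gondensity} is exactly this pushforward: pairing a density against $g^{-1}.f_0$ equals pairing its pushforward against $f_0$, by the change of variables formula. So the real content is that $g\circ D\circ g^{-1}$ is a compactly supported differential operator.

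To see this, I will write $\psi_g:=\psi\circ(\Ev\text{-point }g\times\mathrm{id}_M): M\to M$ for the morphism $\Spec(\BC)\times M\xrightarrow{g\times\mathrm{id}}G\times M\xrightarrow{\psi}M$. This is an automorphism of the formal manifold $M$, with inverse $\psi_{g^{-1}}$. On global sections, $g^{-1}.f=\la\Ev_g,\psi^*f\ra_1=\psi_g^*(f)$; the analogous sheaf-level statement also holds, since $\psi^*$ is a morphism of sheaves. Hence $f\mapsto g^{-1}.f$ is the global-section map of a sheaf automorphism $\psi_g^*:\overline{\psi_g}^{-1}\CO_M\to\CO_M$. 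Likewise $g.(\cdot)$ on $\underline{\mathcal{D}_M}$ is pushforward of densities along the diffeomorphism $\overline{\psi_{g}}$ (or its inverse; I will fix signs carefully).

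The claim then reduces to invariance of differential operators under transport by an isomorphism of formal manifolds. If $D$ is a differential operator of order $\le r$ supported in $K$, then $\theta\circ D\circ\phi^*$ is a differential operator of order $\le r$ supported in $\overline{\phi}^{-1}(K)$ (or $\overline\phi(K)$), where $\phi$ is an automorphism of $M$ and $\theta$ is the compatible isomorphism on densities. I would prove this using the Grothendieck-style characterization of differential operators by iterated commutators with multiplication operators ([CSW1, Definition 3.6 / Proposition 3.8]). We have $[\theta\circ D\circ\phi^*,h]=\theta\circ[D,\phi^* h]\circ\phi^*$, because $\phi^*$ is an algebra homomorphism and $\theta$ is $\CO_M$-semilinear with respect to $\phi$. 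Induction on the order then gives the result, and locality and support follow since $\overline\phi$ is a homeomorphism.

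The main obstacle is this last step: making precise that pushforward of densities is semilinear with respect to $\phi^*$ modulo $\m_{\CO_M}$, and that the commutator characterization from [CSW1] applies sheaf-theoretically to an operator twisted by $\phi$. If the commutator route is awkward, I would instead work locally in charts $(\R^n)^{(k)}$, where $\psi_g^*$ sends coordinates to formal power series. In that setting I would check directly that composing $\sum a_{\alpha,\beta}\partial_x^\alpha\partial_y^\beta$ (followed by $y\mapsto0$) with $\psi_g^*$ yields an operator of the same form and order.
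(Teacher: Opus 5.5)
Your proposal is correct and follows the paper's proof. The paper likewise obtains $g.\rho_M(D)=\rho_M(g\circ D\circ g^{-1})$ from $\int_M D(g^{-1}.f)=\int_M g.\bigl(D(g^{-1}.f)\bigr)$. It proves $g\circ D\circ g^{-1}\in\mathrm{Diff}_{g.K,r}(\CO_M,\underline{\mathcal{D}_M})$ by the same commutator-transport identity $[\cdots[[g\circ D\circ g^{-1},f_0],f_1],\cdots,f_r](f)=g.\bigl([\cdots[[D,g^{-1}.f_0],g^{-1}.f_1],\cdots,g^{-1}.f_r](g^{-1}.f)\bigr)$ on global sections, together with the fact that translating a density supported in $K$ gives one supported in $g.K$. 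So neither your sheaf-level reformulation via $\psi_g$ nor the chart-level fallback is needed.
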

\begin{proof}
Let $(K,r)\in \mathcal{C}(M)$. 
For each $g\in G$ and each $D
\in\mathrm{Diff}_{K,r}(\CO_M,\underline{\mathcal{D}_M})$,  we claim that 
\be\label{eq:gDg-in} g\circ D\circ g^{-1}\in \mathrm{Diff}_{g.K,r}(\CO_M,\underline{\mathcal{D}_M}).\ee As usual, $g.K$ denotes the image of $\{g\}\times K$ under the continuous map $\overline{\psi}: G\times M\rightarrow M$. 
Then, for every $f\in \CO_M(M)$, 
\[
\begin{aligned}
&\la g.\rho_M(D),f\ra
=\la\rho_M(D),g^{-1}.f \ra
 = \int_M D(g^{-1}.f)  \\
=&\int_M (g^{-1}.1)\cdot D(g^{-1}.f)=\int_M 1 \cdot g.(D(g^{-1}.f))
 =\int_M (g\circ D\circ g^{-1})(f),
\end{aligned}
\] where $1\in \underline{\CO_M}(M)$.
This implies that \be \label{eq:gonD}g.\rho_M(D)=\rho_M(g\circ D\circ g^{-1})\in \RD^{\infty}_c(M;\CO_M)\ee for each $g\in G$ and each $D
\in\mathrm{Diff}_{K,r}(\CO_M,\underline{\mathcal{D}_M})$. 
The lemma then follows. 

Now we turn to prove \eqref{eq:gDg-in}.
Since $D\in \mathrm{Diff}_r(\CO_M,\underline{\mathcal{D}_M})$, by definition,
\[ [\cdots[[D,f_0],f_1],\cdots, f_{r}](f)=0, \quad\text{for all $f_0,f_1,\dots,f_r,f\in \CO_M(M)$}\]
(see \cite[Proposition 3.8 and Definition 3.1]{CSW1}).
This implies that 
\begin{eqnarray*}
   && [\cdots[[g\circ D\circ g^{-1},f_0],f_1],\cdots, f_{r}](f) \\ &=& g.([\cdots[[D,g^{-1}.f_0],g^{-1}.f_1],\cdots, g^{-1}.f_{r}](g^{-1}.f))=0
\end{eqnarray*}
for all $f_0,f_1,\dots, f_r,f\in \CO_M(M)$. Then 
$g\circ D\circ g^{-1}\in \mathrm{Diff}_r(\CO_M,\underline{\mathcal{D}_M})$. 

On the other hand, it is well-known that for each compactly supported smooth density $\tau$ whose support is contained in the compact subset $K$ of $\underline M$, the support of $g.\tau$ is contained in $g.K$. Then 
it is clear that \be\label{eq:supp}g\circ D\circ g^{-1}\in \mathrm{Diff}_{g.K}(\CO_M,\underline{\mathcal{D}_M})\ee and then \eqref{eq:gDg-in} follows, as desired.
\end{proof}

\begin{lemd}\label{lem:Df_i}
    Let $\{f_\alpha\}_{\alpha\in A}$ be a net in $\CO_M(M)$ such that $\lim_{\alpha\in A} f_{\alpha}=0$. Then for each $D\in \mathrm{Diff}_c(\CO_M,\underline{\mathcal{D}_M})$,
    \[\lim_{\alpha\in A} D(f_{\alpha})=0 \quad\text{in the LCS $\underline{\mathcal{D}_M}(M)$.}\]
\end{lemd}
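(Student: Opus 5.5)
The plan is to reduce to a local chart and use the explicit form of differential operators there. A differential operator from $\CO_M$ to $\underline{\mathcal{D}_M}$ with compact support can be localized. Choose a finite cover of $\mathrm{supp}\,D$ by chart domains $U_j\cong(\BR^{n})^{(k)}$ together with a subordinate partition of unity $\{\chi_j\}\subset \underline{\CO_M}(M)$ with compact supports in $U_j$. Then write $D=\sum_j \chi_j D$. This reduces the claim to $D$ supported in a compact subset $K$ of a single chart $U$.

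Multiplication by $\chi_j$ is harmless. It is a continuous operator on $\underline{\mathcal{D}_M}(M)$ with the smooth topology, so it suffices to show $\lim_\alpha D(f_\alpha)=0$ when $D$ is supported in $K\subset U$.

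In the chart $U=(\BR^n)^{(k)}$ I will use \cite[Proposition 3.8]{CSW1}, or the local description of differential operators in \cite{CSW1}. On $U$, an operator of finite order $r$ (and $D$ has finite order near $K$ by compactness) has the form
\[
D(f)=\sum_{|I|+|J|\le r} a_{I,J}\,\bigl(\partial_x^{I}\partial_y^{J} f\bigr)\big|_{y=0}\, dx ,
\]
where the $a_{I,J}$ are smooth functions on $\BR^n$ with support in $K$, and $\partial_y^J f|_{y=0}$ means $J!$ times the coefficient of $y^J$ in $f$. Its seminorms in the smooth topology on $\underline{\mathcal{D}_M}(M)$ are sups over compact sets of finitely many derivatives. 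By Leibniz, each such seminorm of $D(f_\alpha)$ is bounded by a finite sum of terms $\sup_{K}|\partial_x^{I'}\partial_y^{J}f_\alpha|_{y=0}|$. By \cite[Examples 3.11 and 4.4]{CSW1}, these terms are continuous seminorms in the smooth topology on $\CO_U(U)$, which is the topology of $C^\infty(\BR^n)[[y]]$, that is, coefficientwise $C^\infty$ convergence. Finally, the restriction map $\CO_M(M)\to\CO_M(U)$ is continuous. Hence $f_\alpha\to0$ forces each such term to tend to $0$.

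An alternative that avoids coordinates is to use the continuity of $D$ directly. By \eqref{eq:convergence}, the seminorms $\abs{\,\cdot\,}_{X,f}$ are well behaved. Alternatively, one can invoke \cite[Theorem 1.8]{CSW1}-type continuity: differential operators are continuous for the smooth topologies. If \cite{CSW1} already states that every differential operator $\CO_M\to\underline{\mathcal{D}_M}$ induces a continuous map on global sections, the lemma is immediate. I would check for that first.

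The main obstacle is the localization step, specifically confirming that a compactly supported differential operator has bounded order near its support and admits the local coordinate form above. I would resolve it by citing \cite[Definition 3.6 and Proposition 3.8]{CSW1}.
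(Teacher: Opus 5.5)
Your coordinate argument is correct, but it takes a much longer route than the paper. The paper works directly with the defining seminorms. By \cite[Definition 4.1]{CSW1}, the smooth topology on $\underline{\mathcal{D}_M}(M)$ is given by the seminorms $\tau\mapsto\sup_{a\in M}|(X\tau)(a)|$ with $X\in\mathrm{Diff}_c(\underline{\mathcal{D}_M},\underline{\CO_M})$. The topology on $\CO_M(M)$ is given by $f\mapsto\sup_{a\in M}|(Yf)(a)|$ with $Y\in\mathrm{Diff}_c(\CO_M,\underline{\CO_M})$. Since $X\circ D\in\mathrm{Diff}_c(\CO_M,\underline{\CO_M})$, each defining seminorm evaluated at $D(f_\alpha)$ equals a defining seminorm evaluated at $f_\alpha$. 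Hence $D$ is continuous, and the lemma follows in one line. This is exactly the coordinate-free alternative you said you would ``check first''. What closes it is not a separate continuity theorem in \cite{CSW1}. It is simply that the seminorms are indexed by compactly supported differential operators into $\underline{\CO_M}$, and that differential operators compose. Your appeal to \eqref{eq:convergence} there is beside the point, since it concerns the topology on $\mathrm{Diff}_{\mathrm{fin}}(\CO_M,\underline{\mathcal{D}_M})$, not continuity of a fixed $D$. Your route instead needs four ingredients: a partition of unity, locality of $D$, the local normal form $\sum a_{I,J}(\partial_x^I\partial_y^J f)|_{y=0}$, and the explicit description of the smooth topology on $\RC^{\infty}(\BR^n)[[y_1,\dots,y_k]]$. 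The normal form does hold: $D$ kills the ideal generated by $y$-monomials of degree $r+1$, because $y_i$ acts by $0$ on the target, and the remaining pieces are classical differential operators in $x$. What your route buys is explicit estimates; the cost is chart dependence and several external structural inputs. One small point in your reduction: continuity of multiplication by $\chi_j$ is not what is needed, since using it to get $\chi_jD(f_\alpha)\to0$ from $D(f_\alpha)\to0$ would be circular. The right justification is that each $\chi_jD$ is itself a differential operator supported in a compact subset of $U_j$.
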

\begin{proof}
    Note that for each $X\in \mathrm{Diff}_c(\underline{\mathcal{D}_M},\underline{\CO_M})$, $X\circ D\in\mathrm{Diff}_c(\CO_M,\underline {\CO_M}) $. Then the lemma follows from the definition of the smooth topologies on $\CO_M(M)$ and $\underline{\mathcal{D}_M}(M)$.
\end{proof}
\begin{lemd}\label{lem:gonDc}
    The group action  \be \label{eq:gonDc}G\times \RD^{\infty}_c(M;\CO_M)\rightarrow \RD^{\infty}_c(M;\CO_M),\quad (g,\tau)\mapsto g.\tau\ee is  continuous. 
\end{lemd}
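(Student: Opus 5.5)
We need joint continuity of $(g,\tau)\mapsto g.\tau$ on $\underline G\times \RD^{\infty}_c(M;\CO_M)$. The plan is to reduce to the steps $\RD^\infty_{K,r}(M;\CO_M)$ of the inductive limit \eqref{eq:D_ctop}, using \eqref{eq:gonD}: $g.\rho_M(D)=\rho_M(g\circ D\circ g^{-1})$. Joint continuity on an inductive limit is not automatic, so we argue locally in $g$ and then invoke a standard fact. Fix $g_0\in\underline G$ and a relatively compact open neighbourhood $U$ of $g_0$. For $(K,r)\in\mathcal C(M)$, the set $K':=\overline{\psi}(\overline U\times K)$ is compact, and by \eqref{eq:gDg-in} the map $D\mapsto g\circ D\circ g^{-1}$ sends $\mathrm{Diff}_{K,r}$ into $\mathrm{Diff}_{K',r}$ for all $g\in U$.

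\textbf{Step 1: local joint continuity at the level of operators.} We show that
\[U\times \mathrm{Diff}_{K,r}(\CO_M,\underline{\mathcal D_M})\to \mathrm{Diff}_{K',r}(\CO_M,\underline{\mathcal D_M}),\qquad (g,D)\mapsto g\circ D\circ g^{-1},\]
is continuous. By the convergence criterion \eqref{eq:convergence} (used in its seminorm form), it suffices to fix $f\in\CO_M(M)$ and control $g.(D(g^{-1}.f))$ in $\underline{\mathcal D_M}(M)$. Write
\[g.(D(g^{-1}.f))-g_0.(D_0(g_0^{-1}.f))=g.\bigl(D(g^{-1}.f-g_0^{-1}.f)\bigr)+g.\bigl((D-D_0)(g_0^{-1}.f)\bigr)+\bigl(g.-g_0.\bigr)D_0(g_0^{-1}.f).\]
\begin{itemize}
\item The third term tends to zero because the density representation \eqref{eq:Gondensity} is continuous.
\item The second term tends to zero because $(D-D_0)(h)\to 0$ for the fixed $h=g_0^{-1}.f$, combined with equicontinuity of $\{g.\}_{g\in U}$ on densities supported in $K'$. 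This equicontinuity follows from Banach--Steinhaus, or directly from the smooth dependence of $\underline\psi$ on $g\in\overline U$.
\item The first term tends to zero because $g^{-1}.f\to g_0^{-1}.f$ in $\CO_M(M)$ by Lemma \ref{lem:GcontinuonOM}.
\end{itemize}

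\textbf{Step 2: the main obstacle, uniformity in the first term.} The difficulty is that $D$ varies along the net while $g^{-1}.f-g_0^{-1}.f\to0$. Lemma \ref{lem:Df_i} only treats a fixed $D$, so we need the seminorms $\abs{\cdot}_{X,h}$ of $\mathrm{Diff}_{K,r}$ to control $D$ uniformly on bounded or convergent sets of $h$. My approach is to work locally in a chart $(\BR^n)^{(k)}$, using a partition of unity subordinate to a cover of $K$. There, an order-$\le r$ operator supported in $K$ is a finite sum $\sum a_{I,J}\,\partial^I\partial_y^J$ composed with reduction, with coefficients $a_{I,J}\in\underline{\mathcal D_M}$. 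The seminorms then control these coefficients, which reduces the problem to continuity of bilinear pairings between coefficients and finitely many derivatives of $h$ on $K$. Hence
\[D(h_\alpha)\to 0\quad\text{uniformly for } D \text{ in a convergent net whenever } h_\alpha\to0.\]
If the coefficient description is awkward, the fallback is to note that $\mathrm{Diff}_{K,r}$ with this topology is Fréchet (or at least barrelled), so that separate continuity of the pairing $(D,h)\mapsto D(h)$ upgrades to joint continuity.

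\textbf{Step 3: passing to the inductive limit.} Pushing Step 1 through $\rho_M$, the quotient maps give continuity of
\[U\times\RD^\infty_{K,r}(M;\CO_M)\to\RD^\infty_{K',r}(M;\CO_M)\hookrightarrow\RD^\infty_c(M;\CO_M).\]
Here we use that the product of an open map with $\mathrm{id}_U$ is again a quotient map, since $U$ is locally compact. Finally, because $U$ is locally compact, $U\times\varinjlim_{(K,r)}\RD^\infty_{K,r}$ carries the inductive limit topology of the spaces $U\times\RD^\infty_{K,r}$; this is the standard fact that the product with a locally compact space commutes with colimits. This yields continuity near $g_0$, and hence everywhere.
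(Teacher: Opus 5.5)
Steps 1 and 2 can be made to work. The passage to the inductive limit in Step 3, however, is a genuine gap. The fact you quote ($q\times\mathrm{id}_U$ is a quotient map when $q$ is and $U$ is locally compact) is about final topologies in the category of topological spaces. The topology in \eqref{eq:D_ctop} is instead the \emph{locally convex} inductive limit topology: the finest locally convex topology making the inclusions $\RD^\infty_{K,r}(M;\CO_M)\to\RD^\infty_c(M;\CO_M)$ continuous. In general this is strictly coarser than the final topology of those inclusions. Already for $M=\BR$ with $k=0$, $\RD^\infty_c(M;\CO_M)$ is the strict LF-space of compactly supported smooth densities. Its topology is known to be strictly coarser than the final topology of its Fr\'echet steps: the latter is sequential, the former is not. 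So your assertion that $U\times\RD^\infty_c(M;\CO_M)$ carries the inductive limit topology of the spaces $U\times\RD^\infty_{K,r}(M;\CO_M)$ already fails when $U$ is a point. Consequently, continuity on each $U\times\RD^\infty_{K,r}(M;\CO_M)$ does not give continuity on $U\times\RD^\infty_c(M;\CO_M)$ by this route.

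The missing ingredient is an argument that uses linearity and convexity. Step 1 plus compactness of $\overline U$ gives, for each absolutely convex $0$-neighbourhood $W$ of $\RD^\infty_c(M;\CO_M)$ and each $(K,r)$, a $0$-neighbourhood of $\RD^\infty_{K,r}(M;\CO_M)$ mapped into $W$ by every $g\in\overline U$. Then $\bigcap_{g\in\overline U}g^{-1}.W$ is absolutely convex and its trace on every step is a $0$-neighbourhood, so it is a $0$-neighbourhood of the locally convex inductive limit. This equicontinuity of $\{g.\}_{g\in\overline U}$, together with $g.\tau-g_0.\tau_0=g.(\tau-\tau_0)+(g.\tau_0-g_0.\tau_0)$ and continuity of the orbit maps, yields joint continuity.

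The paper avoids both this issue and your Step 2. Since $\RD^\infty_c(M;\CO_M)$ is complete and reflexive, hence barreled, \eqref{eq:barreled} (a Banach--Steinhaus argument) reduces joint continuity to continuity of the orbit maps $g\mapsto g.\rho_M(D)=\rho_M(g\circ D\circ g^{-1})$ for a \emph{fixed} $D\in\mathrm{Diff}_c(\CO_M,\underline{\mathcal{D}_M})$. With $D$ fixed, $D(g_\alpha^{-1}.f)\to D(f)$ is just Lemma \ref{lem:Df_i}. Then \eqref{eq:Gondensity} and \eqref{eq:convergence} give $g_\alpha\circ D\circ g_\alpha^{-1}\to D$. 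Finally, \eqref{eq:gDg-in} eventually places the net in a single $\mathrm{Diff}_{K_0,r}(\CO_M,\underline{\mathcal{D}_M})$, hence gives convergence in $\RD^\infty_{K_0,r}(M;\CO_M)$. The uniformity in $D$ that you single out as the main obstacle is exactly what barreledness supplies, so the chart computation (or Fr\'echet fallback) of Step 2 is not needed.
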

\begin{proof}
 We claim that the map
\be \label{eq:GtoDc}G\rightarrow \RD^{\infty}_c(M;\CO_M),\quad g\mapsto g.\rho_M(D)=\rho_M(g\circ D\circ g^{-1})\ee is continuous for each $D\in \mathrm{Diff}_c(\CO_M,\underline{\mathcal{D}_M})$. Then
the map \eqref{eq:gonDc} is continuous by using the fact that for any group action \[\phi: L\times E \to E,\quad (g,u)\mapsto g.u,\] of a Lie group $L$ on a quasi-complete barreled LCS $E$,  
\be\label{eq:barreled}
\begin{aligned}
&\phi \text{ is continuous if and only if the map $(L\rightarrow E, g\mapsto g.u)$}  \\ & \text{is continuous for all $u\in E$ (\cf\cite[Page 219]{Ga}).}
\end{aligned}
\ee
It suffices to prove the claim.

Let $(K,r)\in \mathcal{C}(M)$ and let $D\in \mathrm{Diff}_{K,r}(\CO_M,\underline{\mathcal{D}_M})$. Let $\{g_\alpha\}_{\alpha\in A}$ be a net such that $\lim_{\alpha\in A} g_\alpha=e$. Then by Proposition \ref{prop:modofgG} (a),  for each $f\in \CO_M(M)$, \[\lim_{\alpha\in A}g^{-1}_\alpha.f=e.f=f\quad \text{in the LCS $\CO_M(M)$.}\]
This, together with  Lemma \ref{lem:Df_i}, implies that  
\[\lim_{\alpha\in A}D(g_{\alpha}^{-1}.f)=D(f)\quad \text{in the LCS $\underline{\mathcal{D}_M}(M)$}.\] It follows from \eqref{eq:Gondensity} and \eqref{eq:convergence} that 
\[\lim_{\alpha\in A} g_{\alpha}\circ D\circ g^{-1}_{\alpha}=D\quad\text{in the LCS $\mathrm{Diff}_{\mathrm{fin}}(\CO_M,\underline{\mathcal{D}_M})$}.\]
 By using \eqref{eq:gDg-in}, without loss of generality, we assume that 
\[\lim_{\alpha\in A} g_{\alpha}\circ D\circ g^{-1}_{\alpha}=D\quad\text{in $\mathrm{Diff}_{K_0,r}(\CO_M,\underline{\mathcal{D}_M})$}\]
for some $(K_0,r)\in \mathcal{C}(M)$. 
Then it is clear that 
\[\lim_{\alpha\in A} g_\alpha.\rho_M(D)=\rho_M(D) \quad \text{in  $\RD^{\infty}_{K_0,r}(M;\CO_M)$}.\]
This implies that the map \eqref{eq:GtoDc} is continuous, as desired. 
\end{proof}

\noindent\textbf{Proof of Proposition \ref{prop:modofgG} (c):} 
The first assertion follows from Lemmas \ref{lem:GOMsm} and \ref{lem:GOMsm1}. 
By Lemmas \ref{lem:GOMsm} and \ref{lem:gonDc}, the restricted actions of $\g$ and 
$\underline G$ on 
$\RD^\infty_c(M;\CO_M)$ are continuous. It suffices to verify the two compatibility conditions \eqref{eq:qLmodule1} and \eqref{eq:qLmodule2}.

By using  
Proposition \ref{prop:modofgG} (b), it is clear that 
for all $g\in G, \eta \in \g$ and $\tau\in \RD^{\infty}_c(M;\CO_M)$, \[ g.(\eta.(g^{-1}.\tau))=(\Ad_g \eta).\tau.\]
Let $(K,r)\in \mathcal{C}(M)$. 
We claim that for each $D\in \mathrm{Diff}_{K,r}(\CO_M,\underline{\mathcal{D}_M})$ and each $\eta\in \Lie(\underline{ G})$, 
\be\label{eq:etaD}\text{$\big(\eta\circ D:\ \CO_M(M)\rightarrow \underline{\mathcal{D}_M}(M),\quad f\mapsto \eta.(D(f))\big)\in \mathrm{Diff}_{K,r+1}(\CO_{M},\underline{\mathcal{D}_M})$}\ee
and 
\be\label{eq:diffgDg-1} \lim_{t\rightarrow 0} \frac{\exp(t\eta)\circ D\circ \exp(-t\eta)-D}{t}=\eta\circ D+D\circ D_{-\eta}\ee
in $\mathrm{Diff}_{K_0,r+1}(\CO_M,\underline{\mathcal{D}_M})$ for some $(K_0,r+1)\in \mathcal{C}(M)$. 
Note that for all $f\in \CO_M(M)$, 
\[ \la \rho_M(\eta\circ D),f\ra =\int_M \eta.(D(f))=\int_M (-\eta.1)\cdot D(f)=0,\]where $1\in \underline{\CO_M}(M)$.
This, together with  \eqref{eq:etaonD}, implies that 
 \[\lim_{t\rightarrow 0}\frac{\exp(t\eta).\rho_M(D)-\rho_M(D)}{t} =\eta.\rho_M(D)\quad \text{for each $\eta\in \Lie(\underline{ G})$}.\] Then the second assertion follows. In what follows, we prove \eqref{eq:etaD} and \eqref{eq:diffgDg-1}.

For \eqref{eq:etaD}, it is straightforward  that for each smooth density $\tau\in\underline{\mathcal{D}_M}(M)$ and each $h\in \CO_M(M)$,

\[\eta.(h\tau)=(\eta.h)\tau+h(\eta.\tau).\]
Here, for each $\CO_M$-module $\CF$, \[\CO_{M}(M)\times \CF(M)\rightarrow \CF(M), \quad (h,u)\mapsto hu\]
is the $\CO_M(M)$-module structure on $\CF(M)$. 
This, together with \cite[Proposition 3.8]{CSW1}, implies 
\[(\tau\mapsto \eta.\tau)\in \mathrm{Diff}_1(\underline{\mathcal{D}_M},\underline{\mathcal{D}_M})\] and then  \eqref{eq:etaD} is clear.

For \eqref{eq:diffgDg-1}, note that for each $f\in \CO_M(M)$ and each $t\in\BR\setminus\{0\}$, 
\be\label{eq:gD0} \begin{aligned}
    &\frac{(\exp(t\eta)\circ D\circ\exp(-t\eta) )(f)-D(f)}{t}\\=&\frac{\exp(t\eta).\big(D(\exp(-t\eta).f-f)\big)}{t}+\frac{\exp(t\eta).(D(f))-D(f)}{t}.
\end{aligned}\ee 
Using \eqref{eq:Gondensity}, it is clear that \[\lim_{t\rightarrow 0} \frac{\exp(t\eta).(D(f))-D(f)}{t}=\eta.(D(f)).\]
Then it follows from  \eqref{eq:convergence} and \eqref{eq:Gondensity} that 
\be\label{eq:gD1} \lim_{t\rightarrow 0 }\frac{\exp(t\eta)\circ D-D}{t}=\eta\circ D\ee
in $\mathrm{Diff}_{K_1,r+1}(\CO_M,\underline{\mathcal{D}_M})$ for some $(K_1,r+1)\in \mathcal{C}(M)$. 

On the other hand, by Proposition \ref{prop:modofgG} (a), we have that  
\[\lim_{t\rightarrow 0} \frac{\exp(-t\eta).f-f}{t}=-\eta.f \quad \text{in the LCS $\CO_M(M)$}.\] Then it follows from Lemma \ref{lem:Df_i} and \eqref{eq:Gondensity} that 
\[\lim_{t\rightarrow 0}\exp(t\eta).\big(\frac{D(\exp(-t\eta).f-f)}{t}\big)=D(-\eta.f). \] Hence, by \eqref{eq:convergence}, \eqref{eq:Gondensity} and \eqref{eq:gDg-in}, it is clear that 
\be\label{eq:gD2} \lim_{t\rightarrow 0}\frac{\exp(t\eta)\circ D\circ \exp(-t\eta)-\exp(t\eta)\circ D}{t}=D\circ D_{-\eta}\ee in $\mathrm{Diff}_{K_2,r+1}(\CO_M,\underline{\mathcal{D}_M})$ for some $(K_2,r+1)\in \mathcal{C}(M)$. 
Then \eqref{eq:diffgDg-1} follows from \eqref{eq:gD0},  \eqref{eq:gD1} and \eqref{eq:gD2} immediately. This finishes the proof. \qed

\section{Proof of Theorem  \ref{thm:eqMR}} 
In this section, we prove Theorem \ref{thm:eqMR} by Proposition \ref{prop:eqMR}.
\subsection{Functors between categories} 
Let $G$ be a formal Lie group.
Denote by 
\[\mathcal{C}\mathrm{om}_{\CO_G(G)},\quad{\mathcal{R}}\mathrm{ep}_{G},\qaq \mathcal{M}\mathrm{od}_{(\g,\underline{G})}\]
the categories of  $\CO_G(G)$-comodules, $G$-representations  and $(\g,\underline G)$-modules, respectively. 

Let $E$ be a quasi-complete LCS.
As usual, let $\RC^{\infty}(\underline{G};E)$  denote the space of $E$-valued smooth functions on $\underline G$. Then
it follows from \[E\wt\otimes_{\pi}\RC^{\infty}(\underline G)=\RC^{\infty}(\underline G;E)\quad \text{(see  \cite[Th\'{e}or\`eme 1]{Sc})}\] and Proposition \ref{prop:OofLLqLl} that 
there is an identification \be \label{eq:HOME}E\wt\otimes_{\pi}\CO_G(G)=\Hom_{\RU(\underline{\g})}(\RU(\g),\RC^{\infty}(\underline{G};E)) \ee as LCS.
Here $\RU(\underline{\g})$ acts on $\RU(\g)$ by left multiplication, and on $\RC^{\infty}(\underline G;E)$ via
\[\RU(\underline \g)\times E\wt\otimes_{\pi}\RC^{\infty}(\underline{G})\rightarrow E\wt\otimes_{\pi}\RC^{\infty}(\underline{G}),\quad
(\tau,u\otimes f)\mapsto u\otimes (g\mapsto \la \Ev_g\tau,f\ra
),
\]
and $\Hom_{\RU(\underline{\g})}(\RU(\g),\RC^{\infty}(\underline{G};E))$ is equipped with the term-wise convergence topology. 
As in \eqref{eq:Hom()}, we will often identify an element $f\in \Hom_{\RU(\underline{\g})}
(\RU(\g),\RC^\infty(\underline G;E))$ with a map
\[
f:\ \underline G\times\RU(\g)\longrightarrow E
\]
such that $f$ is smooth in the first variable, linear in the second variable, and
\[
f(g,\tau\eta)
=\la\Ev_g\tau,f(\,\cdot \,,\eta)\ra
\] for all $\tau\in\RU(\underline{\g})$, $g\in G$, and $\eta\in\RU(\g)$. Here, $f(\, \cdot\,,\eta)$ is the $E$-valued smooth function 
$x\mapsto f(x,\eta)$ on $\underline{G}$.

\begin{prpd}\label{prop:eqMR}
\noindent (a) Let $(\rho,E)$ be a comodule of $\CO_G(G)$. Then  $E$ together with the linear map 
    \be\label{eq:mu*} \rho^{\circ}: \ \RD^{-\infty}_c(G;\CO_G)\wt\otimes_{\mathrm{i}} E\rightarrow E,\quad  \eta\otimes u\mapsto \la\eta,\rho(u)\ra_2\quad\text{(see \eqref{eq:ra12})}\ee is a $G$-representation. Furthermore, the assignment 
    \[(\rho,E)\mapsto(\rho^{\circ},E)\] forms a functor 
    \[F_1:\ \mathcal{C}\mathrm{om}_{\CO_G(G)} \rightarrow\mathcal{R}\mathrm{ep}_G.\]
    
\noindent (b)   Let $(\pi,E)$ be a $G$-representation. Then $E$ together with the restriction maps \be \label{eq:dvarrho}\pi|_{\g}:\ \g\times E\rightarrow E, \quad (\eta,u)\mapsto \eta.u\ee and 
\be \label{eq:linevarrho}\pi|_{G}:\ \underline G\times E\rightarrow E, \quad (g,u)\mapsto g.u:=\Ev_g.u\quad\text{(see Corollary \ref{cor:Gembed})}\ee  is a $(\g,\underline G)$-module. 
 Furthermore, the assignment
    \[(\pi,E)\mapsto((\pi|_{\g},\pi|_{G}),E) 
    \] forms a functor 
    \[F_2:\ \mathcal{R}\mathrm{ep}_G \rightarrow \mathcal{M}\mathrm{od}_{(\g,\underline{G})}.\]

 \noindent  (c)   Let $(\mu = (\mu_{\g}, \mu_{\underline{G}}), E)$ be a $(\mathfrak{g}, \underline{G})$-module. 
    Then $E$ together with the linear map  
     \[
     \begin{array}{rcl}
         \mu^*:\ E &\rightarrow &E \wt{\otimes}_{\pi} \mathcal{O}_G(G)=\Hom_{\RU(\underline{\g})}(\RU(\g),\RC^{\infty}(\underline G;E)),\\  u &\mapsto &\big((g,\eta)\mapsto g.(\eta.u)\big)
     \end{array}
    \] is a comodule of $\mathcal{O}_G(G)$. Furthermore, the assignment
    \begin{equation}\label{eq:assEtogE2}
        (\mu, E) \mapsto (\mu^*, E)
    \end{equation}
    is a functor 
    \[F_3:\ \mathcal{M}\mathrm{od}_{(\g,\underline{G})}\rightarrow \mathcal{C}\mathrm{om}_{\CO_G(G)}.\]

  \noindent(d)
   We have that 
    \[
       F_3 \circ F_2\circ F_1=\mathrm{Id}_{\mathcal{C}\mathrm{om}_{\CO_G(G)}},\ F_1 \circ F_3\circ F_2= \mathrm{Id}_{{\mathcal{R}}\mathrm{ep}_{G}},\ F_2 \circ F_1\circ F_3=\mathrm{Id}_{\mathcal{M}\mathrm{od}_{(\g,\underline{G})}},
    \]  where for each category $\mathcal{C}$, $\mathrm{Id}_{\mathcal{C}}$ is the identity functor. 
\end{prpd}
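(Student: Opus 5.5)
We will verify (a), (b), (c) in turn and then check the three composites in (d) directly on objects. Morphisms need no separate treatment in (d): each $F_j$ sends a morphism to the same underlying continuous linear map, so the identities reduce to equalities of structure maps.

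For (a), the map $\rho^\circ$ is the composition of $\mathrm{id}\otimes\rho$ with the evaluation pairing. Via \eqref{eq:OM'} and \eqref{eq:DMM=DMDM}, the pairing $\RD^{-\infty}_c(G;\CO_G)\wt\otimes_{\mathrm i}(E\wt\otimes_\pi\CO_G(G))\to E$ is continuous; we will justify this via the hypocontinuity of the pairing between a reflexive space and its dual, together with quasi-completeness of $E$. The associativity diagram in \eqref{eq:RepchianruleofD} then follows by transposing the coassociativity square in \eqref{eq:RepchianruleofO}, since the multiplication of $\RD^{-\infty}_c(G;\CO_G)$ is ${}^tm^*$. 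The unit diagram follows from the counit diagram, because the unit of $\RD^{-\infty}_c(G;\CO_G)$ is $\Ev_e={}^t\varepsilon^*(1)$. For continuity of $(g,u)\mapsto \Ev_g.u$, we write it as $(g,u)\mapsto \langle \Ev_g,\rho(u)\rangle_2$. The argument of Lemma \ref{lem:GcontinuonOM}, i.e.\ joint continuity of $G\times(E\wt\otimes_\pi\CO_G(G))\to E$, $(g,f\otimes u)\mapsto f(g)u$, gives this continuity. Functoriality is immediate.

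For (b), the $\g$-action is continuous by \eqref{eq:algconti}, and the $\underline G$-action is continuous by definition of a representation. The Lie algebra and group laws come from associativity of $\pi$ together with the facts that $\g\subset\RD^{-\infty}_c$ is a Lie algebra under commutators and that $g\mapsto\Ev_g$ is a group homomorphism (Corollary \ref{cor:Gembed}). Condition \eqref{eq:qLmodule1} follows from Corollary \ref{cor:realAd}: $\Ad_g\eta=\Ev_g\eta\Ev_{g^{-1}}$, and associativity of the action gives the identity. For \eqref{eq:qLmodule2}, we need $(\Ev_{\exp t\tau}-\Ev_e)/t\to\tau$ in $\RD^{-\infty}_c(G;\CO_G)$, with uniform convergence on bounded sets of $\CO_G(G)$. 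We will prove this as in Proposition \ref{prop:dirac}, reducing to charts $(\BR^n)^{(k)}$ and using the mean value theorem on $\RC^\infty$ coefficients. Once this holds, continuity of $\pi$ yields the limit $\tau.u$.

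For (c), the first task is that $\mu^*(u)$ lies in the right space. The function $g\mapsto g.(\eta.u)$ is smooth, because condition \eqref{eq:qLmodule2} combined with \eqref{eq:qLmodule1} makes the orbit map differentiable, with derivative $g.(\iota(\tau)\eta.u)$, and iterating gives smoothness. We then have to check the $\RU(\underline\g)$-equivariance $f(g,\tau\eta)=\langle\Ev_g\tau,f(\cdot,\eta)\rangle$, which is exactly this derivative formula. Continuity of $\mu^*$ in the term-wise topology follows from continuity of the actions and \eqref{eq:barreled}-type arguments on compact sets. Coassociativity is checked using the explicit comultiplication \eqref{eq:mulofDLLq}. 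Restricted to invariants, both sides send $u$ to $(g_1,g_2,\eta_1\otimes\eta_2)\mapsto g_1g_2.(\Ad_{g_2^{-1}}(\eta_1)\eta_2.u)=g_1.(\eta_1.(g_2.(\eta_2.u)))$ by \eqref{eq:qLmodule1}. The counit condition is $f(e,1)=u$.

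For (d), first consider $F_2F_1F_3$: evaluating $\langle \Ev_g\otimes\eta,\mu^*(u)\rangle_2$ via \eqref{eq:pairing2} returns $g.(\eta.u)$, so restricting to $\g$ and to $\underline G$ recovers $\mu$. Next, $F_3F_2F_1=\mathrm{Id}$ because an element of $E\wt\otimes_\pi\CO_G(G)$ is determined by its values $(g,\eta)\mapsto\langle\Ev_g\eta,\cdot\rangle_2$, since $\Ev_g\eta$ spans $\RD^{-\infty}_c(L)\otimes_{\RU(\l)}\RU(\q)$ densely (Proposition \ref{prop:OofLLqLl}). For $F_1F_3F_2=\mathrm{Id}$, the two $\RD^{-\infty}_c$-actions agree on the elements $\Ev_g\eta$ by associativity. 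We then need density of their span in $\RD^{-\infty}_c(G;\CO_G)\wt\otimes_{\mathrm i}E$ together with continuity of both actions. This density and continuity step, along with the smoothness and continuity checks in (c), is the main obstacle; we will handle it using the reflexivity and the decomposition \eqref{eq:D=[]} with $\RD^{-\infty}_c(\underline G)$, where the Dirac deltas are total.
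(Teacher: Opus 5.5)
Your proposal is correct and follows the paper's proof essentially step for step: you transpose the comodule axioms in (a), use Corollary~\ref{cor:realAd} together with the limit \eqref{eq:limtau} in (b), get coassociativity from \eqref{eq:mulofDLLq} and \eqref{eq:qLmodule1} in (c), and use the pairing \eqref{eq:pairing2} in (d), where your density argument for $F_1\circ F_3\circ F_2$ spells out what the paper calls straightforward. There are two local differences: the paper proves \eqref{eq:limtau} by pulling back along the one-parameter subgroup $\exp_\tau:\BR^{(0)}\to G$ rather than via charts, and it obtains continuity of $\eta\mapsto\la\eta,\rho(u_0)\ra_2$ in (a) directly from the model \eqref{eq:HOME} (pairing $\RD^{-\infty}_c(\underline G)$ with $E$-valued smooth functions on $\underline G$) rather than from hypocontinuity, which by itself only gives continuity on bounded subsets of $\RD^{-\infty}_c(G;\CO_G)$ and would need an extra input such as bornologicity of this strong dual.
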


\subsection{Proof of Proposition \ref{prop:eqMR}} In this subsection, we prove the assertions in Proposition \ref{prop:eqMR} successively.

\noindent\textbf{Proof of Proposition \ref{prop:eqMR} (a):}
 The second assertion follows from the first assertion. Now we prove the first assertion. 
 
 By using \eqref{eq:RepchianruleofO}, we have that $\rho^{\circ}$ satisfies  \eqref{eq:RepchianruleofD} (replace $\pi$ by $\rho^{\circ}$). 
Using the same argument as in the proof of the continuity of the map \eqref{eq:wttimes2} in Lemma \ref{lem:GcontinuonOM}, it is clear that the induced map  \be\label{eq:GonEfunctor2} G\times E\rightarrow E, \quad (g,u)\mapsto \la \Ev_g,\rho(u)\ra_2\ee is continuous.
It suffices to prove that the map $\rho^{\circ}$ is continuous, 
namely, the bilinear map 
        \be \label{eq:mu*2} \RD^{-\infty}_c(G;\CO_G)\times E\rightarrow E,\quad  (\eta, u)\mapsto \la\eta,\rho(u)\ra_2\ee
 is separately continuous. 
        
        Let $\eta_0\in \RD^{-\infty}_c(G;\CO_G)$.
        Note that the 
        linear map \be\label{eq:mu*eta0} E\rightarrow E,\quad u\mapsto \la\eta_0,\rho(u) \ra_2\ee is the   composition 
        \[E\xrightarrow{\rho}E\wt\otimes_{\pi}\CO_G(G)\xrightarrow{\mathrm{id}_{E}\otimes\eta_0}E\wt\otimes_{\pi}\BC=E\] of continuous maps.  Therefore the map \eqref{eq:mu*eta0} is continuous.

On the other hand, for every $u\in E$, 
write 
\be\label{eq:fu}
f_{u}\in
\Hom_{\RU(\underline{\g})}
\bigl(\RU(\g),\RC^\infty(\underline G;E)\bigr)
\ee for the element 
corresponding to $\rho(u)\in E\wt\otimes_{\pi}\CO_G(G)$ (see \eqref{eq:HOME}). Let $u_0\in E$. By \eqref{eq:D=DU} and \eqref{eq:pairing2}, the continuity of the linear map 
\[
\RD^{-\infty}_c(G;\CO_G)\rightarrow E,
\qquad
\eta\mapsto\la\eta,\rho(u_0)\ra_2
\]
follows from the separate continuity of the bilinear map 
\[
\RD^{-\infty}_c(\underline G)\times\RU(\g)\longrightarrow E,
\qquad
(\tau,\eta)\longmapsto
\la\tau,f_{u_0}(\,\cdot\,,\eta)\ra,
\]
which is immediate from \eqref{eq:fu}. This finishes the proof.
\qed

\vspace{3mm}

\noindent\textbf{Proof of Proposition \ref{prop:eqMR} (b):}
The second assertion follows from the first assertion. Now we prove the first assertion. 

By using the continuity of \(\pi\), Corollary \ref{cor:Gembed} and \eqref{eq:RepchianruleofD}, it is clear that \eqref{eq:dvarrho} is a continuous Lie algebra action of $\g$ and \eqref{eq:linevarrho} is a continuous group action of $\underline G$.

It remains to verify the two compatibility conditions in Definition \ref{def:ofqLmod}.
By \eqref{eq:RepchianruleofD} and Corollary \ref{cor:realAd}, we have that 
\[g.(\eta.(g^{-1}.u))=(\Ad_g\eta).u
\]
for all \(g\in G\), \(\eta\in\g\), and \(u\in E\). On the other hand,  we claim that \be\label{eq:limtau} \lim_{t\rightarrow 0} \frac{\Ev_{\exp(t\tau)}-\Ev_{e}}{t}=\tau\quad\text{(in $\RD^{-\infty}_c(G;\CO_G)$) }\ee for each $\tau\in\Lie(\underline G)$. By using this claim, it is clear that for each $u\in E$, 
 \[\lim_{t\rightarrow 0}\frac{\exp(t\tau).u-u}{t}=(\lim_{t\rightarrow 0} \frac{\Ev_{\exp(t\tau)}-\Ev_{e}}{t}).u=\tau.u.\] Then the first assertion follows.

In what follows,  we will prove \eqref{eq:limtau}. Note that there is a one-parameter subgroup
\[ \exp_{\tau}=(\overline{\exp_{\tau}},\exp_{\tau}^*):\ \R^{(0)}\rightarrow  G\]  given by the continuous homomorphism \[\exp_{\tau}^*:\ \CO_G(G)\rightarrow \CO_{\R}^{(0)}(\R),\quad f\mapsto (t\mapsto f(\exp(t\tau))).\]
 Then for each bounded subset $B$ of $\CO_G(G)$ and $t\neq 0$, we have that  
\begin{eqnarray*}
    \sup_{f\in B} |\la \frac{\Ev_{\exp(t\tau)}-\Ev_{e}}{t}-\tau, f\ra| &=&\sup_{f\in B} |\frac{f(\exp(t\tau))-f(e)}{t}-\la \tau, f\ra|\\
    &=& \sup_{h\in B_0}  |\frac{h(t)-h(0)}{t}-h'(0)|.
\end{eqnarray*}
Here, $B_0:=\exp_{\tau}^*(B)$ is a bounded subset of  $\CO_{\R}^{(0)}(\R)$.
It follows from the mean value theorem in several variables (see \cite[Theorem 12.9]{Ap})  that 
\[\lim_{t\rightarrow 0}\sup_{h\in B_0}  |\frac{h(t)-h(0)}{t}-h'(0)|=0.\]
Hence, for each bounded subset $B$ of $\CO_G(G)$,
\[  \lim_{t\rightarrow 0}\sup_{f\in B} |\la \frac{\Ev_{\exp(t\tau)}-\Ev_{e}}{t}-\tau, f\ra|=\lim_{t\rightarrow 0}  \sup_{f\in B_0}  |\frac{f(t)-f(0)}{t}-f'(0)|=0,\] and
then \eqref{eq:limtau} follows. This finishes the proof.
\qed

\vspace{3mm}

\noindent\textbf{Proof of Proposition \ref{prop:eqMR} (c):} 
 The second assertion follows from the first assertion. Now we prove the first assertion. 

For each $u\in E$ and $\eta \in \g$, it is clear that 
\[\big(g\mapsto g.(\eta.u)\big)\in \RC^{\infty}(\underline G;E).\] 
This implies that 
\[\big((g,\eta)\mapsto g.(\eta.u)\big)\in \Hom_{\BC}(\RU(\g),\RC^{\infty}(\underline G;E)). \]
On the other hand, note that for $g\in \underline{G}$,  $\tau\in \Lie(\underline{G})$, $\eta\in \g$, and $u\in E$,
\[\lim_{t\rightarrow 0}\frac{g.(\exp(t\tau).(\eta.u))-g.(\eta.u)}{t}=g.( \lim_{t\rightarrow 0}\frac{\exp(t\tau).(\eta.u)-\eta.u}{t})=g.(\tau.(\eta.u))= g.((\tau\eta).u).\] Therefore, the map 
     \[\mu^*:\ E \longrightarrow E \wt{\otimes}_{\pi}\mathcal{O}_G(G)=\Hom_{\RU(\underline{\g})}(\RU(\g),\RC^{\infty}(\underline G;E)),\quad u\mapsto ((g,\eta)\mapsto g.(\eta.u))
    \] is well-defined. Note that for each $\eta\in \g$,  the orbit map \[E\rightarrow \RC^{\infty}(\underline G;E),\quad u\mapsto \big(g\mapsto g.(\eta.u)\big)\] is continuous.
     Then $\mu^*$ is continuous. 
     It suffices to prove that the diagram 
\be\label{eq:diafortoEO}\begin{CD}
E @>  \mu^*>>  E\wt\otimes_{\pi} \CO_G(G)\\
	@V\mu^* VV           @V V \mu^* \otimes \mathrm{id}_{\CO_G(G)} V\\
		E\wt\otimes_{\pi}\CO_G(G) @>\mathrm{id}_{E}\otimes m^*>> E\wt\otimes_{\pi} \CO_G(G)\wt\otimes_{\pi}\CO_G(G),
\end{CD} \ee commutes.

By Proposition \ref{prop:OofLLqLl} and \eqref{eq:HOME}, we have that \begin{eqnarray*}E\wt\otimes_{\pi} \CO_G(G)\wt\otimes_{\pi} \CO_G(G) &=&E\wo_{\pi}\Hom_{\RU(\underline{\g})}(\RU(\g),\RC^{\infty}(\underline{G}))\wo_{\pi}\Hom_{\RU(\underline{\g})}(\RU(\g),\RC^{\infty}(\underline{G}))\\&\subset& E\wo_{\pi}\Hom_{\BC}(\RU(\g),\RC^{\infty}(\underline{G}))\wo_{\pi}\Hom_{\BC}(\RU(\g),\RC^{\infty}(\underline{G}))\\ &=&\Hom_{\BC}(\RU(\g)\otimes \RU(\g),\RC^{\infty}(\underline{G}\times\underline{G};E)).\end{eqnarray*}
In view of this, we will also view an element in 
$E\wt\otimes_{\pi} \CO_G(G)\wt\otimes_{\pi} \CO_G(G) $ as  a map \[(\underline{G}\times\underline{G})\times(\RU(\g)\otimes \RU(\g))\rightarrow E.\]  
By Proposition \ref{prop:OofLLqLl}, it is straightforward that  for each $u\in E$, 
\begin{eqnarray*}
    ((\mathrm{id}_E\otimes m^*)\circ\mu^*)(u)&=&\big((g_1,g_2,\eta_1\otimes\eta_2)\mapsto g_1.(g_2.((\Ad_{g_2^{-1}}(\eta_1)).(\eta_2.u)))\big)\\
    &=&\big((g_1,g_2,\eta_1\otimes\eta_2)\mapsto g_1.(\eta_1.(g_2.(\eta_2.u)))\big)\quad \text{(see Corollary \ref{cor:realAd})}.
\end{eqnarray*}
We also have that  for each $u\in E$, 
\[((\mu^* \otimes \mathrm{id}_{\CO_G(G)} )\circ \mu^*)(u)=\big((g_1,g_2,\eta_1\otimes\eta_2)\mapsto g_1.(\eta_1.(g_2.(\eta_2.u)))\big).\] Then the diagram \eqref{eq:diafortoEO} commutes, as desired.\qed

\vspace{3mm}
\noindent\textbf{Proof of Proposition \ref{prop:eqMR} (d):} The assertion (d) is straightforward by using Proposition \ref{prop:OofLLqLl}.
\qed

\section*{Acknowledgement}
Fulin Chen was supported by the Natural Science Foundation of Xiamen, China (Grant No. 3502Z202473005), the National Natural Science Foundation of China (Grant No. 12471029), and the Natural Science Foundation of Fujian Province, China (Grant No. 2026J001042). Binyong Sun was supported in part by National Key R \& D Program of China (Grant No. 2022YFA1005300) and New Cornerstone
Science Foundation. 


\begin{thebibliography}{99}



\bibitem[A]{Ap}
T.~M. Apostol, \textit{ Mathematical Analysis}, second edition, 
Addison-Wesley Publishing Co., Reading, Mass.-London-Don Mills, Ont., 1974



\bibitem[C]{Ca}
B. Casselman, \textit{Quasi-complete topological vector spaces}, https://personal.math.ubc.ca/~cass/research/pdf/QC.pdf

\bibitem[CSW1]{CSW1} F. Chen, B. Sun and C. Wang, \textit{Formal manifolds: foundations}, Sci. China Math. {\bf 69} (2026), no.~1, 183--216.
\bibitem[CSW2]{CSW2} F. Chen, B. Sun and C. Wang, \textit{Function spaces on formal manifolds}, arXiv:2407.09329, to appear in Chinese Ann. Math. Ser. B.


\bibitem[CSW3]{CSW3} F. Chen, B. Sun and C. Wang,
\textit{Formal manifolds: local structure of morphisms, and formal submanifolds},
Acta Math. Sin. (Engl. Ser.) {\bf 42} (2026), no.~3, 603--647.

\bibitem[CSW4]{CSW4} F. Chen, B. Sun and C. Wang,
 \textit{Lie pairs and formal Lie groups}, arXiv:2604.25616

 
\bibitem[G]{G}
H.~Grobner, \textit{Smooth-Automorphic Forms and Smooth-Automorphic Representations},
Series on Number Theory and Its Applications, 17, World Scientific, Singapore, 2023.









\bibitem[KV]{KV} A.~W. Knapp and D.~A. Vogan Jr, \textit{Cohomological Induction and Unitary Representations}, Princeton Mathematical Series, 45, Princeton Univ. Press, Princeton, NJ, 1995.

\bibitem[Me]{Me}
R.~Meyer,
\textit{Analytic cyclic cohomology},
Ph.D. thesis, Universit\"at M\"unster, 1999,
arXiv:math/9906205.



\bibitem[Mi]{Mi}
P.~W. Michor, \textit{Radon transform and curvature}, in 75 years of Radon transform (Vienna, 1992), 249--251, Conf. Proc. Lecture Notes Math. Phys., IV, Int. Press, Cambridge, MA, 1994




%




\bibitem[S]{Sc} L. Schwartz, \textit{Espaces de fonctions diff\'erentiables \`a valeurs vectorielles}, Journal d'Analyse, 4, 1954,
88-148.







\bibitem[T]{Tr}
F. Tr\`eves, \textit{Topological Vector Spaces, Distributions and
Kernels}, Academic Press, New York, 1970.


%
%
\bibitem[W]{Ga} 
G. Warner, \textit{ Harmonic analysis on semi-simple Lie groups. I}, Die Grundlehren der mathematischen Wissenschaften, Band 188, Springer, New York-Heidelberg, 1972.
 
\bibitem[Wa]{Wa}
N.~R. Wallach, \textit {Real reductive groups. I}, Pure and Applied Mathematics, 132, Academic Press, Boston, MA, 1988.






\end{thebibliography}
\end{document}